\theoremstyle{plain}
\newtheorem{theorem}{Theorem}[section]
\newtheorem*{theorem*}{Theorem}
\newtheorem{lemma}[theorem]{Lemma}
\newtheorem{proposition}[theorem]{Proposition}
\theoremstyle{definition}
\newtheorem{definition}[theorem]{Definition}
\theoremstyle{remark}
\newtheorem{remark}[theorem]{Remark}
\numberwithin{equation}{section}
\newcommand{\bA}{\mathbb{A}}
\newcommand{\bC}{\mathbb{C}}
\newcommand{\bE}{\mathbb{E}}
\newcommand{\bN}{\mathbb{N}}
\newcommand{\bP}{\mathbb{P}}
\newcommand{\bR}{\mathbb{R}}
\newcommand{\bW}{\mathbb{W}}
\newcommand{\cA}{\mathcal{A}}
\newcommand{\cB}{\mathcal{B}}
\newcommand{\cC}{\mathcal{C}}
\newcommand{\cF}{\mathcal{F}}
\newcommand{\cL}{\mathcal{L}}
\newcommand{\cU}{\mathcal{U}}
\newcommand{\NCPd}{{\rm NCP}_d }
\newcommand{\tr}{{\rm tr}\,}
\newcommand{\trn}{{\rm tr}_n\,}
\DeclareMathOperator{\Lip}{Lip}
\DeclareMathOperator{\Tr}{Tr}
\DeclareMathOperator{\sa}{sa}
\DeclareMathOperator*{\median}{\ast} 
\DeclarePairedDelimiter{\ip}{\langle}{\rangle}
\DeclarePairedDelimiter{\norm}{\lVert}{\rVert}
\title{Large $n$-limit of matrix control problems and non-commutative controls}
\author{Wilfrid Gangbo$^1$}
\address{$^1$Department of Mathmatics, University of California, Los Angeles}
\email{\href{mailto:wgangbo@math.ucla.edu}{wgangbo@math.ucla.edu}}
\author{David Jekel$^2$}
\address{$^2$Department of Mathematical Sciences, University of Copenhagen}
\email{\href{mailto:daj@math.ku.dk}{daj@math.ku.dk}}
\author{Kyeongsik Nam$^3$}
\address{$^3$Department of Mathematical Sciences, Korea Advanced Institute of Science and Technology}
\email{\href{mailto:ksnam@kaist.ac.kr}{ksnam@kaist.ac.kr}}
\author{Aaron Z. Palmer$^4$}
\address{$^4$Department of Mathmatics, University of California, Los Angeles}
\email{\href{mailto:azp6@cornell.edu}{azp6@cornell.edu}}
\subjclass{Primary:
	49L12, 
	46L54, 
	Secondary:
	46L52, 
	60B20, 
	60F10 
}
\begin{document}
	
	\begin{abstract} 
	Building on the free‐probability stochastic control framework introduced in \cite{2025viscosity}, we connect optimal control problems for \(n\times n\) random‐matrix ensembles with their infinite‐dimensional, free‐probability analogues. Under natural convexity hypotheses, we prove that the non‐commutative value function captures the large-\(n\) limit of the corresponding finite-matrix control problems. As an application, we give a new perspective on the Laplace principle for convex functionals in the theory of large deviations for random matrices.
	\end{abstract}

	\maketitle

	\section{Introduction} \label{sec: introduction}

	Large random matrices naturally appear in diverse fields such as quantum mechanics and quantum chaos.
	In recent years, a growing body of work has highlighted the rich interplay between matrix optimization problems and control theory, particularly in applications such as the statistical mechanics of large complex systems. The study of finite-dimensional matrix control problems has revealed subtle structures that emerge in the large-size limit, motivating us to show that the finite \(n \times n\) matrix control problem converges, as \(n\to\infty\). More precisely, in finite dimensions, control problems are typically formulated in terms of optimizing a cost functional on the space of \(n \times n\) matrices; however, as \(n\) grows to infinity, classical tools may fail to capture emergent features such as non-local interactions and complex symmetry properties inherent in large random matrices. In this paper, by drawing on the abstract Hamilton--Jacobi framework from our earlier work in free probability setting \cite{2025viscosity}, we rigorously analyze the limit and show that the resulting problem is naturally set in a tracial von Neumann algebra---a non-commutative analog of classical measure spaces.

	The passage from finite-dimensional matrix control to its infinite-dimensional counterpart is not merely a technical convenience. In this limit, the empirical spectral distributions of the matrices, previously described by classical probability  measures when the matrices were self-adjoint, are replaced by non-commutative distributions. This transition necessitates a shift in perspective, from classical PDEs defined on Euclidean spaces   to the PDEs on spaces of non-commutative laws. These new types of equations not only encapsulate the dynamics of large random matrix models but also pave the way for various applications where the interplay of randomness, non-commutativity, and optimization comes into play.
	
	Our work thus addresses several key questions: How does the matrix control problem behave as the matrix dimension increases to infinity? What are the appropriate asymptotic limits, and how can they be characterized in the language of non-commutative analysis? By rigorously establishing that the finite-dimensional control problem converges to a well-defined Hamilton--Jacobi equation in the infinite-dimensional setting, we provide both a theoretical foundation and a set of analytical tools for studying large-scale matrix optimization.

	~

	Before formulating the control problem precisely, we recall fundamental results from random matrix theory and free probability.  Wigner’s semicircle law guarantees that the empirical spectral distribution  of \textup{GUE} converges to the semicircle law supported on \([-2,2]\). More generally, by the seminal work of Dan Voiculescu   \cite{voiculescu1991limit, voiculescu1998strengthened}, the \textup{GUE}-Brownian motion $(\widehat W^n_t)_{t \ge 0}$  converge in distribution to the \emph{free Brownian motion} (or \emph{free semicircular process}) $(S_t)_{t \ge 0}$  on a non-commutative probability space \((\mathcal{A}, \tau)\). Precisely, for any $t>0$ and a non-commutative polynomial \( p \),  as \( n \to \infty \),
	\[
	\frac{1}{n} \operatorname{tr}\big(p(\widehat W^n_t)\big) \to \tau\big(p(S_t)\big).
	\] 
	When considering a \(d\)-tuple of independent \textup{GUE} ensembles, Voiculescu’s asymptotic freeness theorem further guarantees joint convergence  to a freely independent semicircular family \((S^1,\dots,S^d)\). This collection of results  are manifestation of the concentration of measure, and   justifies replacing the classical randomness of the \textup{GUE}-Brownian motion by the deterministic structure of the free semicircular process in the large-\( n \) limit.

	The state space of our stochastic control problem will be 
	$M_n(\mathbb{C})_{\sa}^d,$
	the set of \(d\)-tuples of self-adjoint \(n \times n\) matrices. While the \(d=1\) case reduces to spectral measures, the multi-matrix setting (\(d>1\)) requires the language of \emph{non-commutative laws} (see \S \ref{subsec: von Neumann algebras}), since the joint distribution of non-commuting variables cannot be captured by a classical probability measure on $\bR^d$ (see \cite[\S 5.2.1]{anderson2010introduction}).  
	
	Let us now precisely describe the stochastic control problems.
	Our framework incorporates two sources of noise: the \emph{common noise} and the \emph{free individual noise}. The free individual noise is a \textup{GUE}-Brownian motion \( (\widehat{W}_t^n)_{t \ge 0}\) on the space of \(d\)-tuples of $n\times n$ self-adjoint matrices.   In contrast, the common noise is given by a single classical Brownian motion \((W_t^0)_{t \ge 0}\) which acts uniformly across the matrices. As \(n\) tends to infinity, while the free individual noise converges to free Brownian motion, the  nature of the common noise persists.
	Given two nonnegative parameters \(\beta_C\) and \(\beta_F\) governing the strength of the common and free individual noise respectively, we consider a time-dependent random control \((\alpha^n_t)_{t \ge 0}\) taking values in \(M_n(\mathbb{C})_{\sa}^d\) and the controlled dynamics
	\begin{equation}\label{eq:d1}
		dX_t^{n,j}[\alpha^n] =  \alpha_t^{n,j}\, dt + \beta_C\, \mathbf{1}_{M_n(\mathbb{C})}\, dW_t^0 + \beta_F\, d\widehat{W}^{n,j}_t,\quad j=1,\dots,d,
	\end{equation}
	with initial condition 
	\[
	X_{t_0}^{n}[\alpha^n] = x_0^n \in M_n(\mathbb{C})_{\sa}^d.
	\]
	Associated to these dynamics is a cost functional with running cost \(L_{M_n(\mathbb{C})}\) and terminal cost \(g_{M_n(\mathbb{C})}\). The value function is then defined as
	\begin{align}\label{eq:d2}
		\widehat {V}_{M_n(\mathbb{C})}&(t_0,x_0^n) \nonumber \\
		&:= \inf_{\alpha^n \in \widehat{\mathbb{A}}_{M_n(\mathbb{C})}^{t_0,T}} \left\{ \mathbb{E}\!\left[\int_{t_0}^T L_{M_n(\mathbb{C})}\big(X^n_t[\alpha^n],\alpha^n_t\big)\,dt + g_{M_n(\mathbb{C})}\big(X^n_T[\alpha^n]\big) \right] :\, X^n_{t_0}[\alpha^n] = x_0^n \right\},
	\end{align}
	where  $\widehat{\mathbb{A}}_{M_n(\mathbb{C})}^{t_0,T}$ denotes the collection of admissible controls.
	
	A central challenge addressed in this paper is to study the large-\(n\) limit of the value functions \(\widehat{V}_{M_n(\mathbb{C})}(t_0,x_0^n)\). As mentioned above, when $d >1$, the joint distribution of non-commuting variables is described by a {non-commutative law}. 
	More precisely, suppose that for each \(n\), the initial condition \(x_0^n\) is a \(d\)-tuple of $n\times n$ self-adjoint matrices with uniformly bounded operator norms, and that the corresponding non-commutative laws converge (in the weak-\(*\) topology). Then the natural question is: Do the value functions \(\widehat{V}_{M_n(\mathbb{C})}(t_0,x_0^n)\) converge as \(n\to\infty\), and if so, how can we characterize the limit? The answer is provided by our formulation in the language of free probability and tracial von Neumann algebras.
	
	In the infinite-dimensional analogue, we work with a tracial von Neumann algebra \(\mathcal{A} = (A,\tau)\). The limiting stochastic dynamics are then described by the equation
	\begin{equation}\label{eq:d1noncom}
		dX_t^j = \alpha_t^j\, dt + \beta_C\, \mathbf{1}_{\mathcal{A}}\, dW_t^0 + \beta_F\, dS_t^j,\quad j=1,\dots,d,
	\end{equation}
	with the initial condition \(X_{t_0}[\alpha] = x_0 \in L^2(\mathcal{A})_{\sa}^d\). Here, the free Brownian motion \((S_t^j)_{t \ge 0}\) captures the deterministic limiting behavior of the individual noise \( (\widehat{W}_t^{n,j})_{t \ge 0}\), while the common noise $(W_t^0)_{t \ge 0}$ remains a classical  Brownian motion.

	While our previous paper \cite{2025viscosity} set up and studied the stocastic control problems and value functions in the free probability setting, this paper will give a rigorous treatment of convergence for multi‐matrix stochastic control problems to the free limits. In particular, we show how the convergence of non‐commutative laws---replacing classical empirical spectral measures---yields a canonical infinite‐dimensional limit in the setting of tracial von Neumann algebras. 
	By bridging the finite and infinite-dimensional settings, our approach not only reinforces the deep connections between mean field games, random matrix theory, and control theory, but also opens up new avenues for studying stochastic control problems where non-commutative randomness shows up.

	\subsection{Organization}
	
	\begin{itemize}
		\item \S \ref{sec: setup} gives the details of the setup for the von Neumann algebraic and random matrix control problems, and states our main result.
		\item \S \ref{sec: preliminaries} reviews necessary ingredients for the proof of the main theorem, including properties of the value function from our previous paper \cite{2025viscosity} and a general form of Voiculescu's asymptotic freeness theorem.
		\item \S \ref{sec: discretization} introduces approximations for the von Neumann algebraic and matrix control problems obtained by discretizing the time domain and the classical probability space which the common noise inhabits.
		\item \S \ref{sec: convergence} proves the main theorem on convergence of the finite-dimensional value functions to the free limit under the assumption of $E$-convexity.
		\item \S \ref{sec: large deviations} applies the main result to a problem from large deviation theory.  Namely, we prove a Laplace principle for tuples of GUE matrices for the case of convex functionals (compare \cite{biane2003large,Dabrowski2017Laplace}).
		\item The appendices contain technical details for some of the background results, namely a proof of the general asymptotic freeness theorem (\S \ref{sec: asymptotic freeness proof}), estimates for discretization of the common noise (\S \ref{sec: normal estimates}), and the relationship between the matrix Laplacian and the free Laplacian on an important class of test functions (\S \ref{sec: Laplacians}).
	\end{itemize}

	\subsection{Acknowledgements}
	
	W.G. was supported by NSF grant DMS-2154578 and Air Force grant FA9550-18-1-0502. D.J. was partially supported by the National Sciences and Engineering Research Council (Canada) grant RGPIN-2017-05650, the Independent Research Fund of Denmark grant 1026-00371B, and the EU Horizon Marie Sk{\l}odowska-Curie Action FREEINFOGEOM, grant 101209517. K.N. was supported by the National Research Foundation of Korea (RS-2019-NR040050). A.Z.P. also acknowledges the support of Air Force grant FA9550-18-1-0502. We thank Dimitri Shlyakhtenko for numerous conversations that motivated this work. 
	
	\section{Setup and main results} \label{sec: setup}
	
	\subsection{Von Neumann Algebras} \label{subsec: von Neumann algebras}
	
	We briefly recall the
	framework of non-commutative probability spaces, which was also used in \cite{2025viscosity}. For a more thorough exposition, see for instance \cite[\S 5.2.3]{anderson2010introduction} and \cite[\S 2]{gangbo2022duality}.  A \emph{$W^*$--algebra} (or equivalently a von Neumann algebra) if is a unital 
	$C^*$--algebra $A$ together 
	with an operator norm $\|\cdot\|_\infty$ such 
	that $A$ as a 
	Banach space is a dual of some Banach space $A_*$.  A \emph{tracial $\mathrm{W}^*$-algebra} or \emph{non-commutative probability space} is a $\mathrm{W}^*$-algebra $A$ together with a faithful normal tracial state $\tau \in A_*$.  Here \emph{state} means that $\tau(x^*x) \geq 0$ and $\tau(1) = 1$; \emph{faithful} means that $\tau(x^*x) = 0$ if and only if $x = 0$; \emph{normal} means that $\tau \in A_*$ or $\tau$ is weak-$*$ continuous; \emph{tracial} means that $\tau(xy) = \tau(yx)$.  These properties are analogous to those of the expectation or integral as a functional on $L^\infty$ of a classical probability space.
	
	The GNS construction \cite{segal1947irreducible} produces a 
	Hilbert space $L^2(\cA)$ as follows:  
	A pre-inner product can 
	be defined on $\cA$ by
	$$
	\langle X,Y\rangle_{L^2(\cA)} := \tau(X^*\, Y) , \hbox{ for } X,Y\in L^2(\cA).
	$$
	Faithfulness of $\tau$ guarantees this inner 
	product is non-degenerate. Completing $\cA$ under 
	the induced norm gives the Hilbert 
	space $L^2(\cA)$; we continue to write 
	$\langle\cdot,\cdot\rangle_{L^2(\cA)}$ for its 
	extension. For $d$-tuples $X=(X^1,\dots,X^d)$ and $Y=(Y^1,\dots,Y^d)$ in $L^2(\cA)^d$,  
	the inner product is defined as
	$$
	\langle X,Y\rangle_{L^2(\cA)} := \sum_{j=1}^d\tau({X^j}^*\, Y^j),
	$$
	and the Hilbert space norm is defined as
	$$
	\|X\|_{L^2(\cA)} : = \sqrt{ \langle X,X\rangle_{L^2(\cA)}}.
	$$
	We write $L^\infty(\cA)\subset L^2(\cA)$ for the collection of elements that remain 
	bounded in the operator norm. Since $A\subset L^\infty(\cA)$, this also 
	induces a norm on tuples,
	$$
	\|X\|_\infty := \max_{j\in \{1,\ldots,d\}} \|X^j\|_\infty, \hbox{ for } X = (X^1,\cdots,X^d) \in L^\infty(\cA)^d.
	$$
	Let $\mathbf1_\cA$ denote the unit in $L^2(\cA)$, and write $L^2(\cA)^d_{\textup{sa}}$ for the self-adjoint 
	$d$-tuples. In particular, we write $\mathbbm1_\cA=(\mathbf1_\cA,\dots,\mathbf1_\cA)\in L^2(\cA)^d_{\textup{sa}}$, and each standard basis vector $\mathbf e^j_\cA\in L^2(\cA)^d_{\textup{sa}}$ has $\mathbf1_\cA$ in its $j$-th entry 
	and zero elsewhere.
	
	\subsection{Free products}
	
	Free independence and free products of tracial von Neumann algebras are crucial to this paper, since our stochastic optimization problems are defined using free Brownian motion.  Free independence is a non-commutative form of independence that describes the large-$n$ behavior of many random matrix models (see \S \ref{sec: asymptotic freeness} below).  It also generally provides a way to embed two given tracial von Neumann algebras into a larger one.  For background, we recommend \cite{voiculescu1992freerandom}, and further citations are given in \cite[\S 2.2]{2025viscosity} of our previous paper.
	
	We recall the definition quickly here.  If $\cA=(A, \tau)$ is a tracial $\mathrm{W}^*$--algebra and $\{\cA_j=(A_j,\tau): j\in J\}$ are tracial $\mathrm{W}^*$-subalgebras of $\cA$ with an index set $J$, we say that $\{\cA_j: j\in J\}$ are \emph{freely independent} if for all positive integers $n$ and $j:\{1,\ldots,n\}\rightarrow J$ such that $j(k)\not= j(k+1)$  for $k=1,\cdots,n-1,$
	$$
	\tau\bigg( \prod_{k=1}^{n}\big(a_k-\tau(a_k)\big)\bigg)=0, \qquad \hbox{for all }(a_1,\ldots,a_{n}) \in A_{j(1)}\times\ldots \times A_{j(n)},
	$$
	where the terms in the product are understood to be multiplied in order from left to right; see \cite{voiculescu1985symmetries,voiculescu1992freerandom}.  More generally, if $\{\cA_j: j \in J\}$ is a collection of tracial $\mathrm{W}^*$-algebras in $\cA$ containing a common subalgebra $\cB$ and if $E_{\cB}$ denote the trace-preserving conditional expectation $\cA \to \cB$, then we say that $\{\cA_j: j \in J\}$ are \emph{freely independent with amalgamation over $\cB$} or \emph{freely independent over $\cB$} if for all positive integers $n$ and $j:\{1,\ldots,n\}\rightarrow J$ such that $j(k)\not= j(k+1)$   for $k=1,\cdots,n-1,$ 
	$$
	E_{\cB} \bigg( \prod_{k=1}^n\big(a_k-E_{\cB}(a_k)\big)\bigg)=0, \qquad \hbox{for all }(a_1,\ldots,a_{n}) \in A_{j(1)}\times\ldots \times A_{j(n)}.
	$$
	
	Given any indexed family $(\cA_j)_{j \in J}$ and $\cB$ be tracial $\mathrm{W}^*$-algebras, and tracial $\mathrm{W}^*$-embeddings $\varphi_j: \cB \to \cA_j$, there exists a unique (up to isomorphism) free product von Neumann algebra generated by copies of $\cA_j$ that are freely independent over $\cB$.  More precisely (see e.g.\ \cite[Lemma 2.1]{2025viscosity}), there exists a tracial $\mathrm{W}^*$-algebra $\cC$ and tracial $W^*$--embeddings $\iota_j: \cA_j \to \cC$ such that
	\begin{enumerate}
		\item $\varphi = \iota_j \circ \varphi_j: \cB \to \cC$ is independent of $j$.
		\item The images $(\iota_j(\cA_j))_{j \in J}$ are freely independent with amalgamation over $\varphi(\cB)$.
		\item $\cC$ is generated by $(\iota_j(\cA_j))_{j \in J}$.
	\end{enumerate}
	Moreover, if $\widetilde{\cC}$ and $\widetilde{\iota}_j$ are another tracial $\mathrm{W}^*$-algebra and  tracial $W^*$--embeddings satisfying these properties, then there exists a unique isomorphism $\Phi: \cC \to \widetilde{\cC}$ such that $\Phi \circ \iota_j = \widetilde{\iota}_j$ for $j \in J$.
	
	\subsection{Non-commutative laws}\label{subsecNon-commutativeLaws} 
	
	As in our previous paper \cite{2025viscosity}, we need the notion of a ``law'' or ``joint distribution'' for $d$–tuples of non-commutative 
	random variables in a tracial von Neumann algebra, which is given for instance in \cite[\S 5.2]{anderson2010introduction}. For convenience, we fix throughout the paper a set $\bW$ of representatives 
	for each isomorphism class of separable-predual tracial $W^*$-algebras, so that every such algebra appears exactly once in $\bW$.
	
	We denote by  ${\rm NCP}_d:=\bC\langle x_1, \cdots, x_d \rangle$, the universal 
	unital algebra generated by variables $x_1 , \cdots , x_d$. 
	Let $\Sigma_{d,R}$ be 
	the set of linear functionals 
	$\lambda: {\rm NCP}_d\rightarrow \bC$  satisfying
	\begin{align*}
		\lambda(1)=1,\quad \lambda(pp^*)\geq 0,\quad  \lambda(pq)=\lambda(qp) \qquad \forall p,q\in {\rm NCP}_d
	\end{align*}
	that are $R$-exponentially 
	bounded, i.e. for any $k \in \mathbb{N}$ and a 
	monomial $\phi$ of degree $k$,
	$$
	|\lambda(\phi)| \leq R^k.
	$$
	We equip $\Sigma_{d,R}$ with the weak-$*$ topology as linear functionals on ${\rm NCP}_d$, which is metrizable since ${\rm NCP}_d$ has a countable basis and the evaluation of each polynomial $p$ is uniformly bounded for $\lambda$ in $\Sigma_{d,R}$.  For $\cA\in \bW$, there is a canonical map 
	$\lambda:\{X\in L^\infty(\cA)_{\textup{sa}}:\|X\|_\infty\leq R\}\rightarrow \Sigma_{d, R}$ 
	given by
	$$
	\lambda_X(p) := \tau\big(p(X)\big) \quad \hbox{ for }p\in {\rm NCP}_d.
	$$
	Conversely, every non-commutative law in $\Sigma_{d,R}$ arises as $\lambda_X$ for some $d$-tuple in some tracial von Neumann algebra (see \cite[Theorem 5.2.4]{anderson2010introduction}).  Let $\Sigma_d^\infty$ be the union of $\Sigma_{d,R}$ over 
	all $R>0$ with the 
	natural equivalence.
	
	In this paper, we work with self-adjoint elements in the non-commutative $L^2$ space $L^2(\cA)$ associated to a tracial von Neumann algebra $\cA$, and hence we want a notion of non-commutative laws $\Sigma_d^2$ of self-adjoint tuples in $L^2(\cA)$.  One approach to defining this is to complete $\Sigma_d^\infty$ in the $L^2$-Wasserstein metric.  The Wasserstein 
	metric is defined as follows: for $\lambda_1,\lambda_2\in \Sigma_{d,R}$,
	$$
	d_W^2(\lambda_1,\lambda_2) := \inf \Big\{\|X_1-X_2\|_{L^2(\cA)}^2: \cA\in \bW,\ X_1,X_2\in L^\infty(\cA)_{\textup{sa}}^d,\ {\lambda}_{X_1}=\lambda_1,\  {\lambda}_{X_2}=\lambda_2\Big\}.
	$$
	We define ${\Sigma}_d^2$ be the 
	closure of $\Sigma_d^\infty$ with respect to 
	the Wasserstein metric.  We showed in \cite[Lemma 2.3]{2025viscosity} that the elements of $\Sigma_d^2$ can always be realized by elements in $L^2(\cA)$ for some tracial von Neumann algebra $\cA$.
	
	Due to the fact that $\Sigma_d^2$ is not separable with respect to Wasserstein distance \cite[Theorem 1.8]{gangbo2022duality}, it will also be useful to have a suitable weak-$*$ topology on $\Sigma_d^2$.  Following \cite[\S A]{gangbo2022duality}, we define a map $\iota: \Sigma_d^2 \to \Sigma_{d,\pi/2}$ by sending the law of a  $d$-tuple $X = (X_1,\dots,X_d)$ of $L^2$ elements to the law of $\arctan(X) = (\arctan(X_1),\dots,\arctan(X_d))$, where $\arctan(X_j)$ is defined by continuous functional calculus.  We define the weak-$*$ topology on $\Sigma_d^2$ as the pullback via $\iota$ of the weak-$*$ topology on $\Sigma_{d,\pi/2}$ described above.  Equivalently, if $X^{(k)}$ is a self-adjoint $d$-tuple in $L^2(\cA_k)$ and $X$ is a self-adjoint $d$-tuple in $L^2(\cA)$, then the non-commutative laws $\lambda_{X^{(k)}}$ converge in the weak-$*$ topology of $\Sigma_d^2$ to $\lambda_X$ if and only if $\tau_{\cA_k}(p(\arctan(X^{(k)})) \to \tau_{\cA}(p(\arctan(X))$ for all $p \in {\rm NCP}_d$.  The test functions given by applying the trace to $p \circ \arctan$ are examples of the \emph{cylindrical test functions} that we discuss in \S \ref{sec: Laplacians}.  Note that the weak-$*$ topology is metrizable since for instance, we can list the monomials $(p_k)_{k \in \bN}$ and define the metric
	\begin{align} \label{metric}
		d(\lambda_1,\lambda_2) = \sum_{k \in \bN} \frac{1}{2^k (\pi/2)^{\deg(p_k)}} |\lambda_1(p_k \circ \arctan) - \lambda_2(p_k \circ \arctan)|.
	\end{align}

	We remark that for any $R>0$, the space $\Sigma_{d,R}$ can be viewed naturally as a subset of $\Sigma_d^2$ since every bounded $d$-tuple is an example of a $d$-tuple in $L^2$.  To see that the map $\Sigma_{d,R} \to \Sigma_d^2$ is injective, suppose that $X$ and $Y$ are $d$-tuples from $\cA$ and $\cB$ respectively.  Suppose $\tau_{\cA}(p \circ \arctan(X)) = \tau_{\cB}(p \circ \arctan(Y))$ for all $p \in {\rm NCP}_d$.  Note that $\norm{\arctan(X_j)} \leq \arctan(R) < \pi/2$.  Moreover, $\tan$ can be uniformly approximated by a sequence of polynomials $q_k$ on $[-\arctan(R),\arctan(R)]$, and so $p \circ q_k \circ \arctan(X) \to p(X)$ and $p \circ q_k \circ \arctan(Y) \to p(Y)$ in operator norm.  Hence, $\tau_{\cA}(p(X)) = \tau_{\cB}(p(Y))$.
	
	Furthermore, we claim that the weak-$*$ topology on $\Sigma_{d,R}$ coincides with the restriction of the weak-$*$ topology on $\Sigma_d^2$.  To see this, suppose that $\lambda_{X^{(k)}}$ converges to $\lambda_X$ in $\Sigma_{d,R}$.  To show convergence in $\Sigma_d^2$, consider a non-commutative polynomial $p$; in fact, by linearity, it suffices to consider the case where $p$ is a monomial.  For every $\varepsilon > 0$, there is a single-variable polynomial $q$ that approximates $\arctan$ within $\varepsilon$ uniformly on $[-R,R]$, and so $\norm{p \circ q(X^{(k)}) - p \circ \arctan(X^{(k)})} \leq \deg(p) \varepsilon$, and the same holds for $X$.  By assumption, $\tau_{\cA^{(k)}}(p \circ q(X^{(k)})) \to \tau_{\cA}(p \circ q(X))$, and so by taking $q$ arbitrarily close to $\arctan$ on $[-R,R]$, we obtain $\tau_{\cA^{(k)}}(p \circ \arctan(X^{(k)})) \to \tau_{\cA}(p \circ \arctan(X))$.  Since $\Sigma_{d,R}$ is compact and $\Sigma_d^2$ is Hausdorff and the inclusion map is injective, it is a homeomorphism onto its image.
	
	\subsection{Setup}
	
	We introduce a framework for control policies and cost functionals in the setting of general non-commutative stochastic optimization. 
	
	\subsubsection{Classical and free Brownian motions}
	Let $(\Omega,\mathcal{F},(\mathcal{F}_t)_{0\le t\le T},\mathbb{P})$ be a non-atomic,
	complete filtered probability 
	space supporting a standard Brownian motion
	$(W_t^0)_{t\in [0,T]}\in C([0,T]; L^2(\Omega, \mathcal{F},\bP))$:  
	\begin{itemize}
		\item[(a)]  $W_0^0=0$.
		\item[(b)]  For $0\leq s\leq t\leq T$, $W_t^0 - W_s^0$ 
		is normally distributed with 
		mean 0 and variance $t-s$.
		\item[(c)]  For any sequence of times  $0=t_0\le t_1 \le t_2 \le \cdots \le t_{k-1}  \le t_k= T,$ the 
		collection of increments  $W_{t_{j+1}}^0-W_{t_j}^0$ 
		for $j\in \{0,1,\ldots, k-1\}$ are (mutually) independent. 
	\end{itemize}
	We assume that  $\mathcal F= \sigma (W^0_s : 0\le s\le T)$ 
	and  $\mathcal F_t= \sigma (W^0_t : 0\le s\le t)$ for $t\in [0,T]$.

	In what follows, we identify scalar Brownian motions with their canonical embeddings into the von Neumann algebra by writing $\mathbf{1}_{\cA}\, W_t^0\in L^\infty(\cA)_{\textup{sa}}$ or $\mathbbm{1}_{\cA}\, W_t^0\in L^\infty(\cA)_{\textup{sa}}^d$.

	~

	Analogously to filtrations of $\sigma$-algebras, an increasing family of sub-von Neumann algebras $(\mathcal A_t)_{t\in [t_0,t_1]}$ in $\mathcal A\in\bW$ is called a \emph{free filtration}. Given $0\le t_0\le t_1\le T$ and a 
	free filtration $(\mathcal A_t)_{t\in [t_0,t_1]}$, 
	a $d$-dimensional process 
	$
	S = (S_t)_{t\in [t_0,t_1]} \in C\bigl([t_0,t_1];L^\infty(\mathcal A)_{\textup{sa}}^d\bigr)$
	is called a 
	\emph{free Brownian motion} (or \emph{free semicircular process}) compatible 
	with  $(\mathcal A_t)_{t\in [t_0,t_1]} $ if it 
	satisfies the following properties:

	\begin{itemize}
		\item[(a)] $S_{t_0} = 0$.
		
		\item[(b)] For $t_0\leq s\leq t\leq t_1$ and $l\in \{1,\ldots, d\}$, the 
		increment $S_t^l - S_s^l$ is semi-circularly 
		distributed with mean 0 and variance $t-s$, 
		and the components $\{S_t^l - S_s^l\}_{l=1}^d$ are freely independent.
		\item[(c)] $S_t\in L^\infty(\cA_t)_{\textup{sa}}^{d}$ for all $t\in [t_0,t_1]$. 
		\item[(d)] For $t_0 \leq s\leq t\leq t_1$,   $S_t - S_s$ is freely independent of $\cA_s$.
	\end{itemize} 
	We collect assumptions of our framework—specified in Sections \ref{sec 2.2.2}, 
	and \ref{sec:cost_functions}—which we call \textbf{Assumption A}.

	
	\subsubsection{Control policies} \label{sec 2.2.2}
	We assume that  for $\cA \in \bW$, controls in $\cA$  
	belong to some $\bA_\cA\subseteq L^2(\cA)_{\textup{sa}}^d$ which satisfies 
	\begin{itemize}
		\item[(a)] $\bA_\cA\subset L^2(\cA)_{\textup{sa}}^d$ is closed and convex. 
		\item[(b)]  $0 \in \bA_\cA$. 
		\item[(c)]  For any $\cB\in \bW$ and a tracial 
		$W^*$--embedding $\iota:\cA \to \cB$ (with its adjoint $E$), we have  
		$$\iota\, \bA_\cA \subset \bA_\cB \quad {\rm and} \quad E\, \bA_\cB \subset \bA_\cA.$$
	\end{itemize}
	Given $\cA\in \bW$, $[t_0, t_1] \subset [0, T]$,
	and $x_0\in L^2(\cA)_{\textup{sa}}^d$, we let  
	$\bA_{\cA,x_0}^{t_0,t_1}$ be the collection of 
	\emph{admissible} control policies 
	$$
	\widetilde{\alpha} = \Big((\alpha_t)_{t\in [t_0,t_1]},(\cA_t)_{t\in [t_0,t_1]}, (S_t)_{t\in [t_0,t_1]}\Big)
	$$
	that satisfy the 
	following properties:
	\begin{itemize}
		\item[(a)]  $(\cA_t)_{t\in [t_0,t_1]}$ is a free 
		filtration, i.e., an increasing sequence 
		of tracial $W^*$ subalgebras 
		of $\cA$, such that $x_0\in L^2(\cA_{t_0})_{\textup{sa}}^d$.
		\item[(b)]   $(S_t)_{t\in[t_0,t_1]}$ is a $d$-dimensional 
		free semi-circular 
		process compatible with 
		$(\cA_t)_{t\in [t_0,t_1]}$.
		\item[(c)]   For every $s\in [t_0,t_1]$, we have 
		$(\alpha_t)_{t\in [t_0,s]}\in L^2\big([t_0,s]\times (\Omega, \mathcal{F}_s,\mathbb{P}); \bA_{\cA_s}\big)$, 
		i.e., $(\alpha_t)_{t\in [t_0,t_1]}$ is progressively measurable 
		and freely progressive.  
	\end{itemize}

	\subsubsection{Cost functions}\label{sec:cost_functions}
	We consider a 
	running cost $L_{\cA}:L^2(\cA)_{\textup{sa}}^d\times \mathbb{A}_{\cA}\rightarrow \mathbb{R}$ and a 
	terminal cost $g_{\cA}:L^2(\cA)_{\textup{sa}}^d \rightarrow   \mathbb{R}$  
	satisfying the following assumptions.
	\begin{itemize}
		\item[(a)] Both $(L_{\cA})_{\cA\in \bW}$ and $(g_{\cA})_{\cA\in \bW}$ 
		are tracial $W^*$--functions. Equivalently, 
		$(L_{\cA})_{\cA\in \bW}$ may be defined on the 
		space of joint non-commutative laws in $\Sigma_{2d}^2$ and $(g_{\cA})_{\cA\in \bW}$ is 
		a function on the space 
		of non-commutative laws in $\Sigma_d^2$.
		\item[(b)]   $(L_\cA)_{\cA\in \bW}$ is \emph{$E$-convex} 
		in the control variable, 
		meaning that for any $\cA\in \bW$ and $X\in L^2(\cA)^d_{\textup{sa}}$, 
		$\alpha \mapsto L_\cA(X,\alpha)$ is convex, and  
		for any tracial $W^*$--embedding  $\iota:\cA\rightarrow \mathcal{B}$ with its adjoint 
		$E:L^2(\mathcal{B})_{\textup{sa}}^d\rightarrow L^2(\mathcal{A})_{\textup{sa}}^d$, 
		\begin{align}\label{eqn:E_convex}
			L_\cA( X, {E}\, \alpha) \leq  L_{\mathcal{B}}(\iota\, X, \alpha) \hbox{ for all }  \alpha\in\bA_\mathcal{B}.
		\end{align}
		\item[(c)] Similar to the drift,
		we assume that $(L_{\cA})_{\cA\in \bW}$ is uniformly 
		continuous on bounded sets.
		\item[(d)]  There exists a constant $C_1>0$ 
		such that for all $\cA\in \bW$, $X\in L^2(\cA)^d_{\textup{sa}}$ 
		and $\alpha \in \bA_{\cA}$, 
		\begin{align}\label{eqn:lower_and_upper_bounds}
			-C_1 +\frac{1}{C_1}\|\alpha\|_{L^2(\cA)}^2\leq&\  L_{\cA}(X,\alpha)\leq C_1\big(1+ \|X\|_{L^2(\cA)} + \|\alpha\|_{L^2(\cA)}^2\big),\\
			-C_1\leq&\  g_{\cA}(X)\leq C_1\big(1+ \|X\|_{L^2(\cA)}\big).\nonumber
		\end{align}
		Also  $(L_{\cA})_{\cA\in \bW}$ and 
		$ (g_{\cA})_{\cA\in \bW}$ are Lipschitz with 
		respect to $X$:  There exists a constant $C_2>0$ 
		such that for all $\cA\in \bW$, $X_1,X_2\in L^2(\cA)^d_{\textup{sa}}$ 
		and $\alpha \in \bA_{\cA}$, 
		\begin{align}\label{eqn:Lipschitz_bounds}
			|L_{\cA}(X_1,\alpha) -L_{\cA}(X_2,\alpha)| \leq&\ C_2\|X_1-X_2\|_{L^2(\cA)}, \\
			|g_{\cA}(X_1) -g_{\cA}(X_2)|\leq&\ C_2\|X_1-X_2\|_{L^2(\cA)}. \nonumber 
		\end{align}
	\end{itemize}

	\subsubsection{Assumption B} \label{sec: Assumption B}
	The following conditions are referred to as \textbf{Assumption B}. 
	\begin{enumerate}[label=(\alph*)] \item The control set is $\mathbb{A}_{\cA} = L^2(\cA)_{\textup{sa}}^d$. 
		\item The Lagrangian  $(L_{\cA})_{\cA\in \bW}$ is 
		jointly $E$-convex in 
		$(X,\alpha)$ and the terminal 
		cost $(g_{\cA})_{\cA\in \bW}$ is $E$-convex, in the sense that  for any $\cA\in \bW$, 
		the maps $(X,\alpha) \mapsto L_\cA(X,\alpha)$ and 
		$X\mapsto g_{\cA}(X)$ are convex, and  for any tracial 
		$W^*$--embedding  $\iota:\cA\rightarrow \mathcal{B}$ with its 
		adjoint $E:L^2(\mathcal{B})_{\textup{sa}}^d\rightarrow L^2(\mathcal{A})_{\textup{sa}}^d$, 
		\begin{align*}
			L_\cA( E\, X, {E}\, \alpha) \leq  L_{\mathcal{B}}(X, \alpha)& \hbox{ for all }  X\in L^2(\cB)_{\textup{sa}}^d \hbox{ and }\alpha\in\bA_\mathcal{B},\\
			g_\cA( E\, X) \leq  g_{\mathcal{B}}(X)& \hbox{ for all }  X\in L^2(\cB)_{\textup{sa}}^d.
		\end{align*}
		
	\end{enumerate}

	We now finally introduce an additional condition, \textbf{Assumption C},  which was not included in our previous work. This new assumption is essential for establishing the rigorous convergence of the value functions.
	
	\subsubsection{Assumption C}  \label{c}
	The following final  condition is refer to as \textbf{Assumption C.}
	
	
	Let $L_{\cA}$ be a Lagrangian of the form $$L_{\cA}(X,\alpha) = L_{\cA}^0(X,\alpha) + c \norm{\alpha}_{L^2(\cA)}^2,$$ where $c \geq 0$ and $L_0$ is $\kappa$-Lipschitz  ($\kappa>0$) with respect to $\norm{\cdot}_{L^1(\cA)}$ in both $X$ and $\alpha$, in the sense that  for any $X_1,\alpha_1,X_2,\alpha_2\in L^2(\cA)_{\textup{sa}}^d$,
	\begin{align}\label{eqn:L-1-Lipschitz}
		\big|L^0_{\cA}(X_1,\alpha_1) -L^0_{\cA}(X_2,\alpha_2)\big| \leq&\ \kappa \big(\|X_1-X_2\|_{L^1(\cA)} + \|\alpha_1-\alpha_2\|_{L^1(\cA)}\big).
	\end{align}
	We also assume that $(L^0_{\cA})_{\cA\in \mathbb{W}}$ is weak* continuous in the sense that if $(X^i,\alpha^i)\in L^\infty(\cA^i)_{\textup{sa}}^{2d}$ is a sequence converging in noncommutative law to $(X,\alpha)\in L^\infty(\cA)_{\textup{sa}}^{2d}$ as $i\rightarrow \infty$, then
	$$
	\lim_{i\rightarrow\infty} L^0_{\cA^i}(X^i,\alpha^i) = L^0_{\cA}(X,\alpha).
	$$
	
	\vspace{3mm}
	
	In summary, the assumptions in Sections \ref{sec 2.2.2}–\ref{sec:cost_functions} will be collected as \textbf{Assumption A}, those in Section \ref{sec: Assumption B} as \textbf{Assumption B}, and the condition of Section \ref{c} as \textbf{Assumption C}.

	\subsection{Value functions} \label{subsec: value function definitions}

	We now define the notion of  value functions.
	
	\subsubsection{Value functions on von Neumann algebras}
	
	Let $\beta_C\geq0$ and  $\beta_F\geq 0$ be 
	diffusion coefficients.
	Let $\cA\in \bW$ and $[t_0, t_1] \subset [0, T]$. For $x_0\in L^2(\cA)^d_{\textup{sa}}$ and $\widetilde{\alpha}\in \mathbb{A}_{\mathcal{A},x_0}^{t_0,t_1}$, we consider the SDE on  $ L^2(\cA)^d_{\textup{sa}}$:  
	\begin{align}\label{eqn:common_noise}
		\begin{cases} dX_t = \alpha_t\, dt +  \beta_C\, \mathbbm{1}_{\cA}\, dW_t^0 +  \beta_F\, dS_t,\\
			X_{t_0}=x_0.
		\end{cases}
	\end{align}  
	We denote by $X_t[t_0,x_0,\widetilde{\alpha}]$ the solution of (\ref{eqn:common_noise}) on $[t_0,t_1]$, or $X_t[\widetilde{\alpha}]$ when $(t_0,x_0)$ 
	are clear from the context. 
	Then we consider the value function on 
	$L^2(\cA)^d_{\textup{sa}}$, defined as follows: For $t_0 \in [0,T]$ and  $x_0\in L^2(\cA)_{\textup{sa}}^d$, 
	\begin{align}\label{eqn:l2_value}
		\widetilde{V}_{\cA}(t_0,x_0) := \inf_{\widetilde{\alpha} \in \bA_{\cA,x_0}^{t_0,T}}\Big\{\bE\Big[\int_{t_0}^T L_{\cA} (X_t[\widetilde{\alpha}], \alpha_t)dt + g_{\cA}(X_T[\widetilde{\alpha}])\Big]:  X_{t_0}[\widetilde\alpha]=x_0 \Big\},
	\end{align}
	where  $X_t[\widetilde \alpha]$ denotes a solution to \eqref{eqn:common_noise} and $\bE$ denotes expectation with respect to  the common noise. Note that if $\cA$ does not support a free Brownian motion $(S_t)_{t\in [t_0,T]}$ freely independent of $x_0$, i.e. if   $\bA_{\cA,x_0}^{t_0,T}$ is an empty set,  then this definition will result in $+\infty$.
	
	The eventual value function  is defined as follows: For $\cA \in \bW,$
	\begin{align} \label{def:bar}
		\overline{V}_{\cA}(t_0, x_0):= \inf_{\iota: \mathcal A \to \mathcal B} \widetilde{V}_{\mathcal B}(t_0, \iota\, x_0),
	\end{align}
	where the infimum is performed over the set of $(\cB, \iota)$ such that $\cB \in \bW$ and $\iota: \mathcal A \to \mathcal B$ is a tracial $W^*$--embedding.  In our previous paper \cite[Lemma 3.9]{2025viscosity}, it is shown that this value function can be viewed as a function on the space of non-commutative laws, i.e.\ the non-commutative Wasserstein space (see Lemma \ref{lem:nov18.2023.2} below). In other words, for $t_0 \in [0,T]$ and  $\lambda \in \Sigma_{d}^2,$ take  any $\cA \in \bW$ and $x_0\in \cA$ such that $\lambda_{x_0}=\lambda$, one can define the value function 
	\begin{align}\label{eqn:value}
		\overline{V}(t_0, \lambda) := \overline{V}_\cA(t_0, x_0).
	\end{align}

	\subsubsection{Value functions on finite-dimensional matrix algebras}
	
	Next, we  define the value function in the finite-dimensional matrix algebra setting. 
	The finite-dimensional matrix algebra corresponds to the case of $\cA = M_n(\bC)$.  
	We consider an orthonormal basis  $\{e_{\widehat{i}}\}_{\widehat{i}=1}^n$ of $\bR^n$  so that $\{ e_{\widehat{i}}\otimes e_{\widehat{j}}\}_{\widehat{i},\widehat{j}=1}^n$ form a basis of $M_n(\bC)$, where the inner product is given by the normalized trace $\tr_n$.
	We get a real orthonormal basis  of $M_n(\bC)_{\textup{sa}}$ for $\widehat{i}\in\{1,\ldots, n\}$ and $\widehat{j}\in \{1,\ldots, n\}$ as
	\begin{align}\label{eqn:n_basis}
		E_{\widehat{i}\widehat{j}} := \begin{cases}
			\sqrt{n}(e_{\widehat{i}} \otimes e_{\widehat{j}}) & \hbox{ if } \widehat{i} =\widehat{j},\\
			\frac{\sqrt{n}}{\sqrt{2}} \big(e_{\widehat{i}} \otimes e_{\widehat{j}}+e_{\widehat{j}}\otimes e_{\widehat{i}}\big)& \hbox{ if }  \widehat{i} < \widehat{j},\\
			\frac{{\rm i}\, \sqrt{n}}{\sqrt{2}} \big(e_{\widehat{i}} \otimes e_{\widehat{j}}- e_{\widehat{j}} \otimes e_{\widehat{i}}\big) &  \hbox{ if }  \widehat{i} > \widehat{j}.
		\end{cases}
	\end{align} 
	In order to approximate the free Brownian motion, we define $d$ many GUE($n$) Brownian motions to be comprised of $d N^2$ independent Brownian motions $(\widehat{W}^{\widehat{i}\widehat{j},l}_t)_t$ for $\widehat{i},\widehat{j}\in \{1,\ldots, n\}$ and $l\in \{1,\ldots d\}$ as 
	\begin{align} \label{gue}
		\widehat{W}^{n,l}_t := \frac{1}{n}\sum_{\widehat{i}=1}^n\sum_{\widehat{j}=1}^n  E_{\widehat{i}\widehat{j}} \widehat{W}^{\widehat{i}\widehat{j},l}_t,\qquad l\in \{1,\ldots, d\}.
	\end{align}
	We require that $(\widehat{W}_t^{n,l}: n  \ge 1, l \in \{1,\dots,d\}, t \in [0,T])$ is independent of the common noise $(W^0_t)_{t \in [0,T]}$.
	From now on, we consider the product probability space $   (\overline \Omega ,\overline \bP) =(\Omega ,\bP) \times (\widehat{\Omega}, \widehat{\bP})$, where the former one is w.r.t the common noise and the  latter one is w.r.t. the the GUE Brownian motion $(\widehat{W}_t^{n,l}: n  \ge 1, l \in \{1,\dots,d\}, t \in [0,T])$. We denote by $ \overline \bE, \bE, \widehat \bE $ the expectation w.r.t. the probability measures $\overline \bP, \bP, \widehat \bP$ respectively. Note that we do not require any coupling conditions on $\text{GUE}(n)$ for different $n$'s.

	When considering the value function $\widetilde{V}_{M_n(\bC)}(t_0,x_0)$ as in (\ref{eqn:l2_value}),  its  value will be $+\infty$ as the strategy set is empty since the von Neumann algebra does not support the free Brownian motions. 
	However, one can consider an approximation using the GUE($n$) Brownian motions defined above.  We denote this as follows: For  $t_0 \in [0,T]$ and $x_0^n \in M_n(\bC),$
	\begin{align}\label{valuematrix}
		\widehat{V}_{M_n(\bC)}(t_0,x_0^n) :=  \inf_{\widetilde{\alpha}^n \in \widehat{\bA}_{M_n(\bC)}^{t_0,T}}\Big\{\overline \bE\Big[\int_{t_0}^T L_{M_n(\bC)} ({X}^n_t[\widetilde{\alpha}^n], \alpha^n_t)dt + g_{M_n(\bC)}(X^n_T[\widetilde{\alpha}^n])\Big]\Big\},
	\end{align}
	where the modified strategy set $\widehat{\bA}_{M_n(\bC)}^{t_0,T}$ corresponds to the progressively measurable control policies,
	{i.e. denoting by $(\overline{\cF}^n_t)_t$ the filtration generated by the common noise and GUE($n$) Brownian motion, $(\alpha^n_t)_{t\in [t_0,s]}\in L^2\big([t_0,s]\times (\Omega,\overline{\cF}^n_s,\mathbb{P}); M_n(\bC) \big)$,} and  $(X_t^n[\widetilde{\alpha}^n])_t$ solves
	\begin{align}\label{eqn:finite_dynamics}
		\begin{cases} dX_t^{n,i} = \alpha^{n,i}_t\, dt + \beta_C\mathbf{1}_{M_n(\mathbb{C})}\, dW_t^0 + \beta_F\, d\widehat{W}^{n,i}_t,\ i\in \{1,\ldots,d\},\\
			X_0^n=x_0^n,
		\end{cases}
	\end{align}
	where $(\widehat{W}^{n,i}_t)_{t \ge 0}$ for $i=1,\cdots,d$ are GUE-Brownian motions defined in \eqref{gue}. Here, we used the assumption $b_\cA(x,\alpha)  = \alpha $ (see Assumption \textbf{B}). In other words, 
	$$
	X_t^{n,i}[\widetilde{\alpha}^n] = x_0^{n,i} + \int_{t_0}^t \alpha_s^{n,i}\, ds +\beta_C\, \mathbf{1}_{M_n(\mathbb{C})} (W_t^0-W_{t_0}^0) + \beta_F\, (\widehat{W}^{n,i}_t-\widehat{W}^{n,i}_{t_0}),\ i\in\{1,\ldots,d\}.
	$$

	\subsection{Main results}
	
	We now state the main result of this paper.  
	\begin{theorem}\label{main thm}
		Suppose that Assumptions \textbf{A}, \textbf{B} and \textbf{C} hold. For any sequence of $x_0^n\in M_n(\bC)_{\textup{sa}}^d$, such that operator norms are uniformly bounded in $n$ and the laws converge weakly* to $\lambda_0\in \Sigma_{d}^{2}$ as $n\rightarrow \infty$, we have
		\begin{align*}
			&\ \lim_{n\rightarrow \infty} \widehat{V}_{M_n(\bC)}(t_0,x_0^n) = \overline{V}(t_0,\lambda_0).
		\end{align*}
	\end{theorem}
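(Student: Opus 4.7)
\medskip

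\noindent\textbf{Proof plan.} The convergence will be established by proving the two one-sided inequalities
\[
\limsup_{n\to\infty}\widehat V_{M_n(\bC)}(t_0,x_0^n)\le \overline V(t_0,\lambda_0),
\qquad
\liminf_{n\to\infty}\widehat V_{M_n(\bC)}(t_0,x_0^n)\ge \overline V(t_0,\lambda_0),
\]
and combining them. The discretization machinery of \S\ref{sec: discretization} is the linchpin: it should allow us to approximate the value $\overline V(t_0,\lambda_0)$ (and also the matrix values $\widehat V_{M_n(\bC)}$) by a restricted class of controls that are (i) piecewise constant in time on a partition $t_0<t_1<\dots<t_N=T$, (ii) depend on the common noise only through finitely many increments $W^0_{t_{k+1}}-W^0_{t_k}$, and (iii) are given on each cell by non-commutative polynomials in the initial data $x_0$ and the free Brownian increments $S_{t_{j+1}}-S_{t_j}$ (resp. GUE increments in the matrix case). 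Under Assumption \textbf{A}--\textbf{C}, the bounds on $L$ and $g$ and the weak-$*$ continuity of $L^0$ guarantee that the discretization error goes to $0$ uniformly in $n$.

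\medskip
\noindent\textbf{Upper bound.} Fix $\varepsilon>0$ and pick a nearly-optimal admissible policy $\widetilde\alpha$ for $\overline V(t_0,\lambda_0)$ on some $\cA\in\bW$ containing an element $x_0$ with $\lambda_{x_0}=\lambda_0$ (using \eqref{def:bar} and \eqref{eqn:value}). Apply the discretization results of \S\ref{sec: discretization} to replace $\widetilde\alpha$ by a policy $\widetilde\alpha^{\mathrm{disc}}$ of the structured form above, losing at most $\varepsilon$ in cost. Now lift this policy to the matrix setting: define $\alpha^n$ by the \emph{same} non-commutative polynomials, but evaluated at $x_0^n$, the discrete common-noise increments, and the GUE increments $\widehat W^{n,j}_{t_{j+1}}-\widehat W^{n,j}_{t_j}$. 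By Voiculescu's asymptotic freeness theorem (as stated in \S\ref{sec: preliminaries}), together with the assumption that $\lambda_{x_0^n}\to\lambda_0$ weak-$*$ with uniformly bounded operator norms, the joint non-commutative law of the matrix state process $X^n_t[\widetilde\alpha^n]$ and the lifted control $\alpha^n_t$ converges, for each fixed $t$, to the corresponding non-commutative law in the free model. The weak-$*$ continuity of $L^0_{\cA}$ from Assumption \textbf{C}, combined with the $\norm{\alpha}_{L^2}^2$ summand whose expected value is easy to match across the matrix/free setting, then yields
\[
\lim_{n\to\infty}\overline\bE\!\left[\int_{t_0}^T L_{M_n(\bC)}(X^n_t,\alpha^n_t)\,dt+g_{M_n(\bC)}(X^n_T)\right]=\overline V(t_0,\lambda_0)+O(\varepsilon),
\]
and letting $\varepsilon\downarrow0$ gives the upper bound.

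\medskip
\noindent\textbf{Lower bound.} This is the harder direction. Pick $\widetilde\alpha^n\in\widehat\bA^{t_0,T}_{M_n(\bC)}$ nearly-optimal for $\widehat V_{M_n(\bC)}$. The coercivity $L\ge\tfrac1{C_1}\norm{\alpha}^2-C_1$ from \eqref{eqn:lower_and_upper_bounds} furnishes a uniform $L^2$-in-time bound on $\overline\bE\|\alpha^n_t\|_{L^2(M_n(\bC))}^2$, which by the Lipschitz bound on $g$ and Gronwall-type estimates gives uniform $L^2$-control on $X^n_t$. The idea is to extract, along a subsequence, a limit policy living in some tracial von Neumann algebra $\cB$ produced as an ultraproduct (or free-probability limit) of the $M_n(\bC)$'s, inside which the GUE motions converge to a free Brownian motion. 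Convexity in $(X,\alpha)$ via Assumption \textbf{B} is used twice: (i) to invoke Jensen's inequality for the conditional expectation onto the ultraproduct's subalgebra $\cB$, so the limiting object is an admissible policy on $\cB$ (the $E$-convexity inequality \eqref{eqn:E_convex} ensures the cost only decreases under the projection); (ii) to pass the liminf through the time integral. Then discretize $\widetilde\alpha^n$ as in the upper-bound step so that the limit is easy to identify: on each time cell the discretized controls lie in finite-dimensional subspaces spanned by non-commutative polynomials in $(x_0^n,\widehat W^n)$ with scalar coefficients depending measurably on the common noise, and weak-$*$ compactness of these coefficients gives a genuine subsequential limit. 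Weak-$*$ continuity of $L^0$ (Assumption \textbf{C}) together with lower semicontinuity of $\alpha\mapsto c\norm{\alpha}^2$ then yields $\liminf\widehat V_{M_n(\bC)}\ge\widetilde V_{\cB}(t_0,\iota x_0)\ge\overline V(t_0,\lambda_0)$.

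\medskip
\noindent\textbf{Main obstacle.} The hardest step is controlling the lower bound: one must identify the limit of a sequence of stochastic processes living in \emph{different} algebras $M_n(\bC)$, and verify that the limit is a legitimate admissible policy in the free sense, in particular that the limiting noise is a free Brownian motion freely independent of the limiting initial data. The combination of (a) the $L^1$-Lipschitz form of $L^0$ and the $\norm{\alpha}^2_{L^2}$ coercive term in Assumption \textbf{C}, (b) the $E$-convexity from Assumption \textbf{B}, and (c) the discretization procedure that reduces to finitely many polynomial degrees of freedom per cell, are precisely what makes the limit tractable; without Assumption \textbf{C} the weak-$*$ passage to the limit in the running cost would fail.
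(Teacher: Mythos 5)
Your upper bound outline is essentially the paper's: discretize, restrict the ambient algebra via $E$-convexity, approximate controls by non-commutative polynomials in the generators, lift to matrices, then invoke asymptotic freeness and the weak-$*$ continuity of $L^0$. That part is sound, though you should note that a polynomial control need not be operator-norm bounded when evaluated at matrices, which is why the paper uses the operator-norm-preserving approximation of \cite[Lemma 2.2]{jekel2024chi} together with a cutoff indicator on the GUE increments.

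For the lower bound, however, there is a genuine gap. You propose passing to an ultraproduct (or a vague ``free-probability limit'') of the $M_n(\bC)$'s, but you do not give a mechanism by which the limiting noise inside the ultraproduct becomes a free Brownian motion freely independent of the limiting initial datum --- the very problem you flag as the main obstacle. The paper sidesteps ultraproducts entirely. It discretizes first, so only finitely many GUE increments $\widehat W^n_{t_i}-\widehat W^n_{t_{i-1}}$ and finitely many common-noise indices $J$ enter; then, for each fixed $J$, it combines (i) the almost-sure asymptotic freeness statements for GUE increments against the controls and initial data, with (ii) a non-obvious probabilistic \emph{selection} argument (using a uniform second-moment bound on the discretized cost and Fatou's lemma) to pick a single sample $\widehat\omega_J$ and subsequence along which both asymptotic freeness holds and the sample cost is near its mean; then (iii) compactness of $\Sigma_{d,R}$ in the weak-$*$ topology to extract the limit law; and finally (iv) a \emph{patching} step using iterated amalgamated free products to assemble the limits over different $J$ into a single tracial von Neumann algebra with one common filtration and free Brownian motion. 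Steps (ii) and (iv) are absent from your sketch and are precisely where the work lies. Moreover, your assertion that Jensen/$E$-convexity is needed for the lower bound (``to project onto the ultraproduct's subalgebra'') contradicts the paper's own remark after Theorem \ref{thm:convergence}, which explicitly states that no $E$-convexity is used for the lower bound: any near-optimal matrix control directly furnishes a candidate in the free formulation once the limiting algebra is constructed, with joint convexity within a fixed algebra entering only through the discretization estimate, not through a projection across algebras.
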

	
	This theorem states that the  large-\(n\) limit of the finite-matrix control problems is described by the non‐commutative value function.
	
	\section{Preliminary results} \label{sec: preliminaries}
	
	Here we recall several ingredients needed for our main proof, namely properties of the value function from \cite{2025viscosity} and a general form of Voiculescu's asymptotic freeness theorem for random matrices (see \cite{voiculescu1991limit}).
	
	\subsection{Properties of the value functions}
	
	In this section, we briefly review the results obtained in the previous paper \cite{2025viscosity}.
	The following lemma states a tracial property of the value function $\overline V_\cA$. 
	\begin{lemma}[Lemma 3.9 in \cite{2025viscosity}]\label{lem:nov18.2023.2} Suppose that Assumption \textbf{A} holds. 
		Then for all $t_0 \in [0,T]$, $(\overline{V}_\cA(t_0,\cdot))_{\cA \in \bW}$ is a tracial $W^*$--function. 
	\end{lemma}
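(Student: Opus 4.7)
The conclusion unpacks to the statement that $\overline{V}_{\cA}(t_0,x_0)$ depends only on the non-commutative law of $x_0$; equivalently, for every tracial $W^*$--embedding $\iota:\cA\to\cD$ and every $x_0\in L^2(\cA)_{\sa}^d$, one has $\overline{V}_{\cA}(t_0,x_0)=\overline{V}_{\cD}(t_0,\iota\, x_0)$. My plan is to prove the two inequalities separately, exploiting only the defining formula \eqref{def:bar} and the amalgamated free product construction recalled in Section 2.2.

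The inequality $\overline{V}_{\cA}(t_0,x_0)\le\overline{V}_{\cD}(t_0,\iota\, x_0)$ is formal: any trace-preserving embedding $\mu:\cD\to\cB$ yields a trace-preserving embedding $\mu\circ\iota:\cA\to\cB$ with $(\mu\circ\iota)\,x_0=\mu(\iota\, x_0)$, so the infimum defining $\overline{V}_{\cA}(t_0,x_0)$ is taken over a family that contains the family indexing $\overline{V}_{\cD}(t_0,\iota\, x_0)$.

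For the reverse inequality, I fix an arbitrary trace-preserving embedding $\kappa:\cA\to\cB$ and build an ambient algebra into which both $\cB$ and $\cD$ embed compatibly, namely the amalgamated free product $\widetilde{\cB}:=\cB *_{\cA}\cD$, with canonical trace-preserving embeddings $j_1:\cB\hookrightarrow\widetilde{\cB}$ and $j_2:\cD\hookrightarrow\widetilde{\cB}$ satisfying $j_1\circ\kappa=j_2\circ\iota$. Given any admissible policy $\widetilde{\alpha}=\bigl((\alpha_t)_t,(\cA_t)_t,(S_t)_t\bigr)\in\bA_{\cB,\kappa\, x_0}^{t_0,T}$, I would lift it to
\[
\widetilde{\alpha}':=\bigl((j_1\alpha_t)_t,\,(j_1\cA_t\vee j_2\cD)_t,\,(j_1 S_t)_t\bigr)
\]
in $\widetilde{\cB}$. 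Since $j_1$ is trace-preserving, Assumption A$(c)$ (compatibility of the control set under embeddings) places the controls $j_1\alpha_t$ in the appropriate $\bA$-sets; since $L_\cA$ and $g_\cA$ are tracial $W^*$-functions by Assumption A$(a)$, the running and terminal costs are preserved along $j_1$. Because the SDE \eqref{eqn:common_noise} is linear and driven by $j_1(W^0_t)$ and $j_1(S_t)$, the solution satisfies $X_t[\widetilde{\alpha}']=j_1\, X_t[\widetilde{\alpha}]$, so the expected cost is unchanged. Taking the infimum over $\widetilde{\alpha}$ then yields $\widetilde{V}_{\widetilde{\cB}}(t_0,j_2\iota\, x_0)\le\widetilde{V}_{\cB}(t_0,\kappa\, x_0)$, and the definition of $\overline{V}_{\cD}$ gives
\[
\overline{V}_{\cD}(t_0,\iota\, x_0)\le\widetilde{V}_{\widetilde{\cB}}(t_0,j_2\iota\, x_0)\le\widetilde{V}_{\cB}(t_0,\kappa\, x_0).
\]
Taking the infimum over $(\cB,\kappa)$ closes the argument.

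The main technical obstacle is verifying that $\widetilde{\alpha}'$ really is admissible in $\bA_{\widetilde{\cB},j_2\iota\, x_0}^{t_0,T}$: one needs $(j_1\cA_t\vee j_2\cD)_t$ to be an honest free filtration of $\widetilde{\cB}$ and, most delicately, the lifted process $j_1 S_t$ must remain a free Brownian motion compatible with this enlarged filtration, i.e.\ the increment $j_1(S_t-S_s)$ must be freely independent of $j_1\cA_s\vee j_2\cD$ in the absolute sense. For this I would combine two inputs: the original freeness of $S_t-S_s$ from $\cA_s$ (which contains $\kappa\cA$) inside $\cB$, and the amalgamated freeness of $j_1\cB$ and $j_2\cD$ over $j_1\kappa\cA=j_2\iota\cA$ inside $\widetilde{\cB}$. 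A standard alternating-word centering argument, applied to words in $j_1\cA_s$ and $j_2\cD$ reduced modulo $j_1\kappa\cA$ and then interleaved with centered words in $j_1(S_t-S_s)$, shows that mixed moments vanish, giving the required absolute freeness. This transfer of freeness across the amalgamated free product is the only non-formal ingredient; everything else is invariance of cost and dynamics under the trace-preserving embedding $j_1$.
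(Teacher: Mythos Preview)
The paper does not prove this lemma here; it is quoted from \cite{2025viscosity} and simply cited.  So there is no proof in the present paper to compare against, and I comment only on the soundness of your argument.

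Your overall strategy---the easy direction by composition of embeddings, the hard direction via an amalgamated free product---is the right one, and the choice $\widetilde{\cB}=\cB*_\cA\cD$ is fine.  The problem is the enlarged filtration $(j_1\cA_t\vee j_2\cD)_t$.  You do not need it: admissibility only asks that the initial point lie in $L^2$ of the bottom algebra, and since $j_2\iota\,x_0=j_1\kappa\,x_0\in L^2(j_1\cA_{t_0})_{\sa}^d$, the unenlarged filtration $(j_1\cA_t)_t$ already works.  With that filtration the lifted process $(j_1S_t)_t$ is trivially a compatible free Brownian motion, because $j_1$ is a trace-preserving embedding and freeness transports along such maps; the cost is preserved because $L$ and $g$ are tracial by Assumption~\textbf{A}(a).

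More seriously, the enlargement actually breaks the argument.  Take $\cA=\cB$ and $\kappa=\mathrm{id}$.  Then $\cB*_\cA\cD\cong\cD$ with $j_1=\iota$ and $j_2=\mathrm{id}$, so your filtration is constantly $\cD$; the increment $\iota(S_t-S_s)$ is a nontrivial element of $\cD$ and cannot be freely independent of $\cD$.  Even away from this degenerate case your alternating-word reduction does not go through, because nothing in the admissibility hypothesis forces $\kappa\cA\subseteq\cA_s$ (only $\kappa x_0\in L^2(\cA_{t_0})$ is assumed), so the amalgamation base $j_1\kappa\cA$ need not sit inside $j_1\cA_s$, and you cannot center words in $j_2\cD$ modulo a subalgebra of $j_1\cA_s$.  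The fix is simply to drop $j_2\cD$ from the filtration; then the ``main technical obstacle'' you flag disappears entirely.
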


	The next lemma shows that the two definitions of the value function—namely \eqref{eqn:l2_value} and \eqref{def:bar}—coincide whenever $\cA$ admits a free Brownian motion that is freely independent of the initial condition.
	\begin{lemma}[Lemma 3.10 in \cite{2025viscosity}]\label{lem:decreasing}
		Suppose that Assumptions \textbf{A} and \textbf{B} hold. 
		
		1. {Let $\cA \in \mathbb{W}$, and let $x_0 \in L^2(\cA)_{\sa}^d$ and $t_0 \in [0,T]$.  Suppose that $\cA$ admits a $d$-variable free Brownian motion $(S_t^0)_{t \in [t_0,T]}$ freely independent of $\mathrm{W}^*(x_0)$. For  $t \in [t_0,T],$ let
			\[
			\cA_t^0 := \mathrm{W}^*(x_0, (S^0_s)_{s \in [t_0,t]}).
			\]
			Then we have
			\[
			\overline{V}_{\cA}(t_0,x_0) = \widetilde{V}_{\cA}(t_0,x_0),
			\]
			and the infimum in \eqref{eqn:l2_value} is witnessed by control policies $\widetilde{\alpha}\in \mathbb{A}_{\cA,x_0}^{t_0,T}$  that use the given filtration $(\cA_t^0)_{t \in [t_0,T]}$ and free Brownian motion $(S_t^0)_{t \in [t_0,T]}$.
			
			2. Let $\cA \in \mathbb{W}$. Let $\mathcal{C}$ be a tracial von Neumann algebra generated by a $d$-variable free Brownian motion $(S^1_t)_{t \in [t_0,T]}$, and define $\mathcal{C}_{t_0,t} := \mathrm{W}^*(S^1_s: s \in [t_0,t])$ for $t \in [t_0,T]$. Let $\iota_1: \cA \to \cA \median \cC$ and $\iota_2: \cC \to \cA * \cC$ be the inclusions associated to the free product.  Then for any  $x_0 \in L^2(\cA)_{\sa}^d$,
			\[
			\overline{V}_{\cA}(t_0,x_0) = \widetilde{V}_{\cA * \cC}(t_0,\iota_1(x_0)),
			\]
			and the infimum is witnessed by control polices that use the fixed filtration $(\iota_1(\cA) \vee \iota_2(\cC_{t_0,t}))_{t \in [t_0,T]}$ and free Brownian motion $(\iota_2(S^1_t))_{t \in [t_0,T]}$.
		} 
	\end{lemma}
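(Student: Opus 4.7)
For both parts, one inequality is immediate: for Part 1, $\overline{V}_\cA(t_0,x_0) \le \widetilde{V}_\cA(t_0,x_0)$ follows by taking $\iota = \mathrm{id}_\cA$ in the infimum defining $\overline V_\cA$; for Part 2, $\overline{V}_\cA(t_0,x_0) \le \widetilde{V}_{\cA * \cC}(t_0,\iota_1 x_0)$ follows by using $\iota_1$. The reverse inequality, together with the ``witness'' claim about the minimizing filtration and noise, reduces in each case to producing, for any extension $\iota: \cA \to \cB$ and any admissible control $\widetilde\alpha^\cB = ((\alpha_t^\cB),(\cB_t),(S_t^\cB)) \in \bA_{\cB,\iota x_0}^{t_0,T}$, an admissible control in the target algebra that uses the prescribed filtration and free Brownian motion and whose cost is no larger.

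\textbf{Transport via a canonical isomorphism.} The subalgebra $\cD_t := \mathrm{W}^*(\iota x_0,(S_s^\cB)_{s \in [t_0,t]}) \subseteq \cB$ is isomorphic, as a filtered tracial $\mathrm{W}^*$-algebra, to $\cA_t^0$ in Part 1 and to the filtration generated by $\iota_1(\mathrm{W}^*(x_0))$ and $\iota_2(\cC_{t_0,t})$ in Part 2, because both sides are generated by a $d$-tuple of law $\lambda_{x_0}$ freely independent of a free Brownian motion, and such a joint distribution is pinned down by freeness. Let $\Psi: \cD_T \to \cA_T^0$ denote the resulting trace-preserving isomorphism with $\Psi(\iota x_0) = x_0$ and $\Psi(S_t^\cB) = S_t^0$. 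I first reduce to the case that $\alpha_t^\cB$ is piecewise constant on a grid $t_0 < t_1 < \dots < t_N = T$; this is legitimate by density of step functions in $L^2$ combined with the regularity in Assumption A(c)--(d). For such a control, define $\alpha_t^\cA := \Psi(E_{\cD_{t_k}} \alpha_{t_k}^\cB)$ on $[t_k, t_{k+1})$, where $E_{\cD_{t_k}}: \cB \to \cD_{t_k}$ is the trace-preserving conditional expectation. This is adapted to the prescribed filtration.

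\textbf{Cost comparison.} Let $\tilde X_t^\cB$ denote the state process in $\cB$ driven by the projected control $\tilde\alpha_t^\cB = E_{\cD_{t_k}}\alpha_{t_k}^\cB$ on $[t_k,t_{k+1})$. By induction on $k$, $\tilde X_{t_k}^\cB = E_{\cD_{t_k}} X_{t_k}^\cB$: the step from $k$ to $k+1$ uses the crucial freeness input that $(S_s^\cB - S_{t_k}^\cB)_{s \ge t_k}$ is freely independent of $\cB_{t_k}$, which forces $E_{\cD_{t_{k+1}}}$ to agree with $E_{\cD_{t_k}}$ on $\cB_{t_k}$, so that the mismatch between projecting the state at time $t_k$ into $\cD_{t_k}$ versus $\cD_{t_{k+1}}$ vanishes. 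Applying the joint $E$-convexity of $L_\cA$ from Assumption B and the tracial $\mathrm{W}^*$-function property yields
\[
\bE\, L_\cB(X_{t_k}^\cB,\alpha_{t_k}^\cB) \;\ge\; \bE\, L_{\cD_{t_k}}(\tilde X_{t_k}^\cB,\tilde\alpha_{t_k}^\cB) \;=\; \bE\, L_\cA(\Psi \tilde X_{t_k}^\cB,\alpha_{t_k}^\cA),
\]
and similarly $\bE\, g_\cB(X_T^\cB) \ge \bE\, g_\cA(\Psi \tilde X_T^\cB)$. Summing over $k$ with weight $t_{k+1}-t_k$ and then letting the mesh tend to zero, invoking Assumption A(c)--(d) to pass to the limit for general $\widetilde\alpha^\cB$, produces the desired comparison. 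This simultaneously establishes the reverse inequality $\widetilde V_\cA(t_0,x_0) \le \overline V_\cA(t_0,x_0)$ (resp.\ $\widetilde V_{\cA*\cC}(t_0,\iota_1 x_0) \le \overline V_\cA(t_0,x_0)$) and exhibits minimizing (near-minimizing) control policies on the distinguished filtration and free Brownian motion.

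\textbf{Main obstacle.} The crux is the induction $\tilde X_{t_k}^\cB = E_{\cD_{t_k}} X_{t_k}^\cB$. For genuinely time-varying projections this identity fails, but on a piecewise-constant time grid the freeness of $S^\cB$ over $(\cB_t)$ makes the ``fresh increments'' orthogonal (in the trace) to $\cB_{t_k}$, allowing the projections $E_{\cD_{t_k}}$ and $E_{\cD_{t_{k+1}}}$ to coincide on the already-built state. The piecewise-constant reduction is therefore not a mere technical convenience: it is exactly what lets conditional expectation commute with the drift integration so that $E$-convexity delivers a clean cost bound. Careful handling of the isomorphism $\Psi$ and of the $L^2$-moment bounds from Assumption A(d) is needed to ensure all quantities remain controlled in the mesh-refinement limit.
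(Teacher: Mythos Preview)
This lemma is not proved in the present paper; it is quoted verbatim from the authors' earlier work \cite{2025viscosity} (as Lemma~3.10 there), so there is no proof here against which to compare your proposal.

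That said, your strategy is sound and is essentially what one expects: project an arbitrary control in $\cB$ onto the canonical filtration $\cD_t=\mathrm{W}^*(\iota x_0,(S_s^\cB)_{s\le t})$, transport via the trace-preserving isomorphism $\Psi$ determined by freeness, and invoke joint $E$-convexity of $L$ and $g$ to compare costs. You have also correctly isolated the key structural fact: because future semicircular increments are freely independent of $\cB_t$, the conditional expectations $E_{\cD_{t'}}$ and $E_{\cD_t}$ agree on $\cB_t$ for $t\le t'$ (this is the same free-product orthogonality computation the paper carries out in the proof of Lemma~\ref{lem: extend filtration}, citing \cite[Lemma~C.4]{2025viscosity}).

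However, your ``main obstacle'' paragraph contains a misconception. The identity $\tilde X_t^\cB=E_{\cD_t}[X_t^\cB]$ does \emph{not} fail in continuous time. Once you know $E_{\cD_{t'}}|_{\cB_t}=E_{\cD_t}|_{\cB_t}$ for all $t\le t'$, it follows immediately that $E_{\cD_t}[\alpha_s]=E_{\cD_s}[\alpha_s]$ for $s\le t$ (since $\alpha_s\in\cB_s$), and hence
\[
E_{\cD_t}\Bigl[\int_{t_0}^t\alpha_s\,ds\Bigr]=\int_{t_0}^t E_{\cD_t}[\alpha_s]\,ds=\int_{t_0}^t E_{\cD_s}[\alpha_s]\,ds.
\]
So defining $\tilde\alpha_t:=E_{\cD_t}[\alpha_t]$ directly (no time grid) already yields $\tilde X_t=E_{\cD_t}[X_t]$, and the $E$-convexity inequality applies pointwise in $t$. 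The piecewise-constant reduction and the mesh-refinement limit are thus unnecessary detours, not the crux you describe; the only place a density argument might genuinely help is in checking joint measurability of $(t,\omega)\mapsto E_{\cD_t}[\alpha_t(\omega)]$, which is a routine point distinct from the one you emphasize.
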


	We remark that under Assumptions \textbf{A} and \textbf{B}, for $\epsilon \in (0,1),$ any $\epsilon$-optimal control policy $(\alpha_t)_t$ in \eqref{eqn:l2_value} satisfies a priori bound   
	\begin{align} \label{l2 bound}
		\bE\Big[\int_{t_0}^T \|\alpha_t\|_{L^2(\cA)}^2dt\Big] \leq   C( \|x_0\|_{L^2(\cA)}+ T+1) .
	\end{align}
	Indeed, for $\alpha_t  \equiv 0,$ the corresponding trajectory $(X_t)_t$ becomes $X_t=x_0+ \beta_C \mathbbm{1}_{\cA}(W^0_t-W^0_{t_0}) +\beta_F(S_t-S_{t_0}),$ implying  
	\[
	\bE[\|X_t\|_{L^2(\cA)}]  \leq \|x_0\|_{L^2(\cA)}+ (\beta_C+\beta_F) \sqrt{t}.
	\]
	By the upper bound condition on $\cL_\cA$ and $g_\cA$ in (\ref{eqn:lower_and_upper_bounds}),  we have $\widetilde{V}_{\cA}(t_0,x_0) \le   C( \|x_0\|_{L^2(\cA)}+ T)$. Hence  using the lower bound condition \eqref{eqn:lower_and_upper_bounds}, we deduce \eqref{l2 bound}.

	~

	The next observation, which will be used in \S \ref{sec: large deviations}, is that when there is no common noise, we do not need to consider classical randomness in our control process either.  In fact, most examples from free probability do not include a common noise (in fact, \cite[\S 5.2-5.3]{2025viscosity} considered two examples without common or individual noise), so the next lemma shows that for these cases, one does not need to perform the more complicated analysis of a classical filtration and non-commutative filtration at the same time that was used in our general theory in \cite{2025viscosity}.
	
	\begin{lemma} \label{lem: reducing to deterministic controls}
		Suppose that Assumptions \textbf{A} and \textbf{B} hold, and assume that $\beta_C = 0$, that is, the process $(X_t)_t$ in \eqref{eqn:common_noise} has no common noise term.  Then the value function $\widetilde{V}_{\cA}(t_0,x_0)$ is the infimum of
		\[
		\int_{t_0}^T L_{\cA} (X_t[\widetilde{\alpha}], \alpha_t)dt + g_{\cA}(X_T[\widetilde{\alpha}])
		\]
		over control processes $\alpha \in \bA_{\cA,x_0}^{t_0,T}$ that are also classically deterministic.
	\end{lemma}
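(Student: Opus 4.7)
The inequality $\widetilde{V}_{\cA}(t_0,x_0) \leq \inf\{\cdots : \alpha \text{ deterministic}\}$ is automatic since every deterministic control is in particular an admissible policy. The plan is to prove the reverse inequality by a Jensen-type averaging argument: for any admissible stochastic control, integrating out the classical randomness produces a deterministic control whose cost is no larger. The key observation enabling this is that when $\beta_C = 0$, the dynamics
\[
X_t[\widetilde{\alpha}] = x_0 + \int_{t_0}^t \alpha_s\, ds + \beta_F(S_t - S_{t_0})
\]
depend on classical randomness $\omega \in \Omega$ \emph{only} through the control $\alpha$; neither $x_0$ nor the free Brownian motion carries any classical fluctuation.

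Given an admissible $\widetilde{\alpha} = ((\alpha_t)_t,(\cA_t)_t,(S_t)_t)$, I would define $\widehat{\alpha}_t := \bE[\alpha_t]$ as a Bochner integral in $L^2(\cA)^d_{\sa}$, which is well-defined since the a priori bound \eqref{l2 bound} gives $\alpha_t \in L^2(\Omega; L^2(\cA)^d_{\sa})$. Because $\alpha_t(\omega) \in \bA_{\cA_t}$ a.s., and $\bA_{\cA_t}$ is closed and convex (Assumption \textbf{A}), a standard approximation-by-simple-functions argument yields $\widehat{\alpha}_t \in \bA_{\cA_t}$ for a.e.\ $t$. The triple $(\widehat{\alpha},(\cA_t)_t,(S_t)_t)$ thus remains admissible, now with $\widehat{\alpha}$ independent of $\omega$. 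Since the dynamics are affine in $\alpha$ and free of classical noise, linearity of the Bochner integral gives $\widehat{X}_t := X_t[\widehat{\alpha}] = \bE[X_t[\widetilde{\alpha}]]$.

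With this in place, I would apply Jensen's inequality. Under Assumption \textbf{B}, $(X,\alpha) \mapsto L_{\cA}(X,\alpha)$ is jointly convex and $X \mapsto g_{\cA}(X)$ is convex, so
\[
L_{\cA}(\widehat{X}_t,\widehat{\alpha}_t) = L_{\cA}\bigl(\bE[X_t[\widetilde{\alpha}]],\bE[\alpha_t]\bigr) \leq \bE\bigl[L_{\cA}(X_t[\widetilde{\alpha}],\alpha_t)\bigr],
\]
and similarly $g_{\cA}(\widehat{X}_T) \leq \bE[g_{\cA}(X_T[\widetilde{\alpha}])]$. Integrating in $t$ and using Fubini shows that the (deterministic) cost of $\widehat{\alpha}$ is bounded above by the expected cost of $\widetilde{\alpha}$, and taking the infimum over $\widetilde{\alpha} \in \bA^{t_0,T}_{\cA,x_0}$ yields the reverse inequality. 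The only technical point requiring care is the infinite-dimensional Jensen's inequality for a convex function $\phi: L^2(\cA)^{2d}_{\sa} \to \bR$ applied to a Bochner integrable random vector, which is standard once one notes that $\phi$ is lower semicontinuous (guaranteed by the continuity assumptions on $L_{\cA}$) and the integrand has finite expectation by the growth bounds in \eqref{eqn:lower_and_upper_bounds}. No step appears genuinely hard; the main conceptual content is the observation that joint $E$-convexity plus absence of common noise is precisely what is needed for the averaging to close.
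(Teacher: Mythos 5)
Your proof is correct, but it follows a genuinely different route from the paper. You construct a candidate deterministic control by \emph{averaging}, setting $\widehat{\alpha}_t := \bE[\alpha_t]$ as a Bochner integral, noting that absence of common noise makes the state map $\alpha \mapsto X[\alpha]$ affine with a deterministic inhomogeneous part, and then invoking Jensen's inequality together with the joint convexity of $L_{\cA}$ and convexity of $g_{\cA}$ from Assumption \textbf{B}. The paper instead uses a \emph{positive-probability selection} argument: given an $\varepsilon$-optimal stochastic control, Fubini--Tonelli shows that for almost every sample $\omega$ the frozen path $t \mapsto \alpha_t(\omega)$ is itself an admissible deterministic control, and the elementary fact that $\bP(Z \leq \bE[Z]) > 0$ for any integrable real random variable $Z$ guarantees a sample where the pathwise cost is at or below the expectation, hence within $\varepsilon$ of the value. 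The two proofs trade generality for structure in opposite directions: the paper's argument does not use convexity of $L_{\cA}$ or $g_{\cA}$ at all and would survive dropping that part of Assumption \textbf{B}, whereas your argument leans on it; conversely, your argument produces an explicit canonical deterministic competitor from a given stochastic control (useful if one later wants a constructive reduction), at the price of having to verify a Bochner--Jensen inequality and the admissibility of $\widehat{\alpha}$, which you handle correctly via closedness and convexity of $\bA_{\cA_t}$. Both are complete proofs; the paper's is shorter and hypothesis-lean, yours is more in the spirit of the Jensen-based reductions used elsewhere in the paper (e.g.\ in the proof of Proposition \ref{prop:discretization}).
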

	
	\begin{proof}
		Fix $\varepsilon > 0$ and let $\widetilde{\alpha} = ((\alpha_t)_{t \in [t_0,T]}, (\cA_t)_{t \in [t_0,T]}, (S_t)_{t \in [t_0,T]}) \in \bA_{\cA,x_0}^{t_0,T}$ such that  
		\[
		\mathbb{E} \left[ \int_{t_0}^T L_{\cA} (X_t[\widetilde{\alpha}], \alpha_t)dt + g_{\cA}(X_T[\widetilde{\alpha}]) \right] < \widetilde{V}_{\cA}(t_0,x_0) + \varepsilon.
		\]
		Let $\Omega$ denote the underlying classical probability space.  By the Fubini-Tonelli theorem, we know that for almost every sample $\omega$ in the probability space, the process $t \mapsto \alpha_t(\omega)$ is measurable and is adapted to the non-commutative filtration $(\mathcal{A}_t)_t$, and since $\beta_C = 0$ it satisfies $dX_t = \alpha_t\,dt + \beta_F dS_t$.  Hence, for almost every $\omega$, the deterministic process $t\mapsto \alpha_t(\omega)$ defines an element $\widetilde{\alpha}(\omega)$ of $\bA_{\cA,x_0}^{t_0,T}$.  Recall that for every integrable (real-valued) random variable $Z$, the probability that $Z \leq \mathbb{E}[Z]$ is strictly positive.  Therefore, there is a set of samples $\omega$ with positive probability such that
		\[
		\int_{t_0}^T L_{\cA} (X_t[\widetilde{\alpha}(\omega)], \alpha_t(\omega))dt + g_{\cA}(X_T[\widetilde{\alpha}(\omega)]) < \widetilde{V}_{\cA}(t_0,x_0) + \varepsilon.
		\]
		Hence, the resulting infimum is the same if we restrict to classically deterministic elements in $\bA_{\cA,x_0}^{t_0,T}$.
	\end{proof}
	
	\subsection{Asymptotic freeness} \label{sec: asymptotic freeness}
	
	In this section, we review Voiculescu’s theorem on the asymptotic freeness of GUE ensembles and related random matrices, and recall key properties of the GUE. We begin with Wigner’s semicircle law for GUE matrices (see, e.g., \cite[\S2]{anderson2010introduction}).
	
	\begin{lemma}
		Let $S^{(n)}$ be $n\times n$ \textup{GUE} matrix.  Then almost surely, for all polynomials $f$,
		\[
		\lim_{n \to \infty} \tr_n[f(S^{(n)})] = \frac{1}{2 \pi} \int_{-2}^2 f(x) \sqrt{4 - x^2}\,dx.
		\]
		Here,  $\tr_n$ denotes the normalized trace.
	\end{lemma}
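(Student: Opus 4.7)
The plan is to prove this by the classical method of moments. Since any polynomial is a finite linear combination of monomials and the space of monomials is countable, it suffices to show that for each fixed $k\in\bN$, $\tr_n\bigl((S^{(n)})^k\bigr)$ converges almost surely to $m_k := \frac{1}{2\pi}\int_{-2}^{2} x^k\sqrt{4-x^2}\,dx$; a countable intersection of almost sure events then yields the joint statement. Recall that the semicircle moments are $m_k=0$ for odd $k$ and $m_{2m}=C_m=\frac{1}{m+1}\binom{2m}{m}$ (the $m$-th Catalan number).

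First I would compute $\widehat\bE\bigl[\tr_n((S^{(n)})^k)\bigr]$. Expanding
\[
\tr_n\bigl((S^{(n)})^k\bigr) = \frac{1}{n} \sum_{i_1,\dots,i_k=1}^n S^{(n)}_{i_1 i_2} S^{(n)}_{i_2 i_3} \cdots S^{(n)}_{i_k i_1},
\]
and applying Wick's (Isserlis's) formula to the centered complex Gaussian entries, which satisfy $\widehat\bE[|S^{(n)}_{ij}|^2]=1/n$ subject to the Hermitian symmetry $S^{(n)}_{ij}=\overline{S^{(n)}_{ji}}$, one obtains a sum over pair partitions $\pi$ of $\{1,\dots,k\}$. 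Each pair partition contributes a factor $n^{-k/2}$ from the variances together with $n^{V(\pi)-1}$ from the index summations, where $V(\pi)$ is the number of vertices of the ribbon graph dual to $\pi$. A standard topological (genus) argument shows $V(\pi)\le k/2+1$, with equality exactly at non-crossing pair partitions. Hence for odd $k$ the expectation vanishes, and for $k=2m$ the leading term is the number of non-crossing pair partitions of $\{1,\dots,2m\}$, which equals $C_m$, while the remaining partitions contribute $O(n^{-2})$.

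Next I would upgrade convergence in expectation to almost sure convergence via a variance bound. The map $M\mapsto \tr_n(M^k)$, viewed as a function of the independent real Gaussian parameters of a GUE matrix, has Lipschitz constant $O(n^{-1})$ on operator-norm bounded sets, so the Gaussian Poincar\'e inequality (or a direct Wick-formula computation) yields $\mathrm{Var}\bigl(\tr_n((S^{(n)})^k)\bigr)=O(n^{-2})$. By Chebyshev's inequality this is summable in $n$, so the Borel--Cantelli lemma gives $\tr_n((S^{(n)})^k)-\widehat\bE[\tr_n((S^{(n)})^k)]\to 0$ almost surely, completing the proof.

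The main obstacle is the genus analysis isolating non-crossing partitions as the leading contribution; everything else is routine bookkeeping. A clean alternative, which avoids redoing this combinatorics here, is to invoke the general asymptotic freeness theorem stated in Section \ref{sec: asymptotic freeness} (and proved in Appendix \ref{sec: asymptotic freeness proof}) in the single-matrix case $d=1$: that theorem identifies the limiting non-commutative law of a GUE ensemble as the semicircular law, which is precisely the statement above.
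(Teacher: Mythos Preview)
The paper does not prove this lemma; it simply cites \cite[\S 2]{anderson2010introduction} as background. Your moment-method sketch (Wick expansion, genus counting to isolate non-crossing pairings, then a variance bound of order $n^{-2}$ and Borel--Cantelli) is precisely the standard argument found in that reference, so it is correct and matches what the paper intends the reader to consult.

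Your proposed alternative, however, does not work as stated. Theorem~\ref{thm: asymptotic freeness} and its proof in Appendix~\ref{sec: asymptotic freeness proof} establish \emph{asymptotic freeness} of GUE matrices from one another and from an independent bounded tuple, but nowhere in that proof is the single-matrix limiting distribution computed. The appendix argument goes through Haar unitary conjugation and Voiculescu's Theorem~\ref{thm: unitary asymptotic freeness}, which controls alternating centered products, not individual moments $\tr_n[(S^{(n)})^k]$. The ``Furthermore'' clause of Theorem~\ref{thm: asymptotic freeness} about convergence of the joint law tacitly \emph{uses} Wigner's semicircle law for the marginals as an input; it does not derive it. So appealing to that theorem here would be circular.

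One minor point on the main argument: the Lipschitz bound for $M\mapsto\tr_n(M^k)$ only holds on operator-norm balls, not globally, so applying the Gaussian Poincar\'e inequality directly needs a truncation step. Your parenthetical ``or a direct Wick-formula computation'' is the cleaner route and gives $\mathrm{Var}=O(n^{-2})$ without that complication.
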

	
	This immediately implies that $\liminf_{n\to\infty}\|S^{(n)}\|_\infty\ge 2.$
	However, the semicircle law alone does not exclude an  outlier eigenvalue above \(2\). 
	As for bounds on the operator norm, 
	the largest eigenvalue of the GUE matrix has been studied in depth, and in particular the largest eigenvalue converges to $2$ and its  fluctuations are described by the Tracy-Widom distribution (see e.g.\ \cite[\S 2.1.6, 2.6.2, 3.1.1]{anderson2010introduction}).  Indeed, Ledoux-Rider \cite{ledoux2010small} showed the following estimate for the operator norm:  For some constants $c, C > 0$,
	\[
	{{\mathbb P}}\big(\norm{S^{(n)}}_\infty \geq 2 + \epsilon\big) \leq C e^{-cn \epsilon^{3/2}},\qquad \forall \epsilon>0.
	\]
	A short argument based on concentration estimates combined with moment bounds was given by Ledoux \cite[equation (9)]{ledoux2003remark} (note that GUE normalization here   differs by a factor of $1/2$).  In particular, one can deduce from the Borel-Cantelli lemma the following.
	
	\begin{lemma} \label{lem: GUE operator norm}
		Let $S^{(n)}$ be a standard  $n\times n$ 
		\textup{GUE} matrix.  Then $\lim_{n \to \infty} \norm{S^{(n)}}_\infty = 2$ almost surely.
	\end{lemma}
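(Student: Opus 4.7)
The plan is to prove the statement by combining two one-sided bounds already discussed in the paragraphs preceding the lemma: a $\liminf$ bound of at least $2$ coming from the semicircle law, and a $\limsup$ bound of at most $2$ coming from the Ledoux--Rider tail estimate. Both halves will be promoted from ``in probability'' to ``almost surely'' by the Borel--Cantelli lemma.

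For the lower bound, I would fix $\varepsilon > 0$ and note that the semicircle density places positive mass $c_\varepsilon > 0$ on the interval $[2 - \varepsilon, 2]$. Pick a continuous compactly supported test function $f_\varepsilon \geq 0$ supported in $[2 - \varepsilon, 2]$ with $\int f_\varepsilon\,d\sigma_{\text{sc}} = c_\varepsilon > 0$, and approximate $f_\varepsilon$ uniformly from below by a polynomial $p_\varepsilon$ with $\int p_\varepsilon\,d\sigma_{\text{sc}} > c_\varepsilon/2$. By the semicircle law stated just before the lemma, $\tr_n[p_\varepsilon(S^{(n)})] \to \int p_\varepsilon\,d\sigma_{\text{sc}} > 0$ almost surely, which forces $S^{(n)}$ to have at least one eigenvalue in $[2-2\varepsilon, \infty)$ for all sufficiently large $n$ almost surely. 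Hence $\liminf_n \|S^{(n)}\|_\infty \geq 2 - 2\varepsilon$ almost surely, and letting $\varepsilon \to 0$ along a countable sequence gives $\liminf_n \|S^{(n)}\|_\infty \geq 2$ almost surely.

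For the upper bound, I would apply the Ledoux--Rider estimate directly: for each fixed $\varepsilon > 0$, the bound
\[
\mathbb{P}\bigl(\|S^{(n)}\|_\infty \geq 2 + \varepsilon\bigr) \leq C e^{-c n \varepsilon^{3/2}}
\]
is summable in $n$, so Borel--Cantelli implies $\|S^{(n)}\|_\infty < 2 + \varepsilon$ eventually almost surely. Taking $\varepsilon$ along the sequence $1/k$ and intersecting the corresponding full-measure events gives $\limsup_n \|S^{(n)}\|_\infty \leq 2$ almost surely.

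Combining the two bounds yields the almost-sure limit. The argument is essentially bookkeeping on top of the two cited results (the semicircle law and the Ledoux--Rider concentration estimate), and the only point requiring care is the lower bound --- specifically, ensuring that the polynomial test function used in the semicircle law really captures an eigenvalue near the edge rather than being cancelled by mass elsewhere; this is handled cleanly by choosing $p_\varepsilon$ to approximate a nonnegative bump supported near $2$ from below, so that positivity of the trace against $p_\varepsilon$ forces an eigenvalue into the support region.
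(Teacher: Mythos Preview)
Your overall structure matches the paper exactly: the paragraph preceding the lemma already records that the semicircle law gives $\liminf_n \|S^{(n)}\|_\infty \geq 2$, and that the Ledoux--Rider tail bound plus Borel--Cantelli gives the upper bound. Your treatment of the $\limsup$ is correct as written.

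There is, however, a genuine gap in your lower-bound implementation. You propose to ``approximate $f_\varepsilon$ uniformly from below by a polynomial $p_\varepsilon$'' and then infer from $\tr_n[p_\varepsilon(S^{(n)})]>0$ that an eigenvalue lies in the support region. For this inference you need $p_\varepsilon\le 0$ outside a neighborhood of $[2-\varepsilon,2]$ on the \emph{entire} real line (since at this stage you have no a~priori confinement of the spectrum). But a nonconstant polynomial cannot be globally bounded, so you cannot have $p_\varepsilon\le f_\varepsilon$ on all of $\mathbb{R}$ with $\|f_\varepsilon-p_\varepsilon\|_{L^\infty(\mathbb{R})}$ small; and if you only enforce $p_\varepsilon\le 0$ outside $[2-\varepsilon,2]$ without a smallness constraint, the negative contribution of $p_\varepsilon$ on $[-2,2-\varepsilon]$ will typically overwhelm the positive part (e.g.\ for $p(x)=-(x-(2-\varepsilon))(x-2)$ one has $\int p\,d\sigma_{\mathrm{sc}}<0$ for small $\varepsilon$), so you cannot arrange $\int p_\varepsilon\,d\sigma_{\mathrm{sc}}>0$ either.

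The standard fix, and presumably what the paper means by ``immediately'', is to use even moments: for each $k$,
\[
\|S^{(n)}\|_\infty \;\ge\; \bigl(\tr_n[(S^{(n)})^{2k}]\bigr)^{1/(2k)} \;\xrightarrow[n\to\infty]{\text{a.s.}}\; \Bigl(\int x^{2k}\,d\sigma_{\mathrm{sc}}\Bigr)^{1/(2k)} = C_k^{1/(2k)},
\]
and $C_k^{1/(2k)}\to 2$ as $k\to\infty$. Alternatively, you may first run your (correct) upper-bound argument to confine the spectrum to $[-3,3]$ eventually, and only then approximate $f_\varepsilon$ by a polynomial uniformly on $[-3,3]$; that repairs your bump-function route at the cost of reversing the order of the two halves.
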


	In order to state Voiculescu’s theorem on the asymptotic freeness of GUE ensembles (and more general random matrices), we first recall the non-commutative analog of classical independence.
	Let $\cA=(A, \tau)$ be a tracial 
	$\mathrm{W}^*$--algebra and $\{\cA_j=(A_j,\tau): j\in J\}$ are tracial $\mathrm{W}^*$-subalgebras of $\cA$ 
	with an index set $J$. We say that $\{\cA_j: j\in J\}$ are \emph{freely independent} 
	if for all positive integers $n$ and $j:\{1,\ldots,n\}\rightarrow J$ such 
	that $j(k)\not= j(k+1)$  for $k=1,\cdots,n-1,$
	$$
	\tau\bigg( \prod_{k=1}^{n}\big(a_k-\tau(a_k)\big)\bigg)=0, \qquad \hbox{for all }(a_1,\ldots,a_{n}) \in A_{j(1)}\times\ldots \times A_{j(n)}
	$$
	(see \cite{voiculescu1985symmetries,voiculescu1992freerandom}  
	for the references). 
	
	To state the results about asymptotic freeness of GUE matrices, it is helpful to formulate asymptotic freeness without any assumption on the existence of the limiting distribution of random matrices.
	
	\begin{definition}
		Let $J$ be a positive integer and $m: \{1,\cdots,J\} \rightarrow \mathbb{N}$ be any function.
		For $j = 1$, \dots, $J$, let $X_j^{(n)} = (X_{j,1}^{(n)},\dots,X_{j,m(j)}^{(n)})$ be a tuple of  $n\times n$ matrices.  We say that $X_1^{(n)}$, \dots, $X_J^{(n)}$ are \emph{asymptotically free} if for any $\ell \in \mathbb{N}$, indices $j_1 \neq j_2 \neq \dots \neq j_\ell$ in $\{1,\cdots,J\}$ and any polynomials $f_i$ in $m(j_i)$ variables (for $i = 1$, \dots, $\ell$), we have
		\[
		\lim_{n \to \infty} \tr_n\left[ (f_1(X_{j_1}^{(n)}) - \tr_n[f_1(X_{j_1}^{(n)})]) \dots (f_\ell(X_{j_\ell}^{(n)}) - \tr_n[f_\ell(X_{j_\ell}^{(n)})]) \right] = 0.
		\]
		When  $X_1^{(n)}$, \dots, $X_J^{(n)}$ 
		are \emph{random} matrix tuples (defined on some common  probability space), we say that they are \emph{almost surely asymptotically free} if the above limit holds almost surely.  
	\end{definition}

	For instance, for any positive integer $m$, independent $n\times n$ GUE matrices $S_1^{(n)}$, \dots, $S_m^{(n)}$ are almost surely asymptotically free (see  \cite[Theorem 5.4.2]{anderson2010introduction}). Note that the sequence $(S_1^{(n)}, \dots, S_{m}^{(n)})$ converges almost surely in the weak* topology of non-commutative laws, as $n\rightarrow \infty$.

	Now, we state the following useful result on the asymptotic freeness.
	
	\begin{theorem} \label{thm: asymptotic freeness}
		Let $m, m'$ be  positive integers. Let  $Y^{(n)} = (Y_1^{(n)}, \dots, Y_m^{(n)})$ be a $m$-tuple of $n\times n$ random self-adjoint matrices, and assume that $\sup_{n} \sup_{j=1,\cdots,m} \norm{Y_j^{(n)}}_\infty <\infty$ almost surely.  Let $S_1^{(n)}, \dots, S_{m'}^{(n)}$ be independent $n\times n$ GUE matrices, independent of $\{Y_j^{(n)}\}_{j=1,\cdots,m}$.  Then $Y^{(n)}$,  $S_1^{(n)}, \dots, S_{m'}^{(n)}$ are almost surely asymptotically free. 
		{Furthermore, if $Y^{(n)} = (Y_1^{(n)}, \dots, Y_m^{(n)})$ converge almost surely in the weak* topology of non-commutative laws, then the joint law of $(Y_1^{(n)}, \dots, Y_m^{(n)},S_1^{(n)}, \dots, S_{m'}^{(n)})$ also converges almost surely in the weak* topology.
		}
	\end{theorem}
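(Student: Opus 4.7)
The plan is to reduce the theorem to the classical Voiculescu asymptotic freeness theorem for a sequence $Y^{(n)}$ whose noncommutative law converges, invoking compactness to handle the general case. First, I would combine the standing hypothesis with Lemma \ref{lem: GUE operator norm} to obtain a constant $R$ and an almost sure event $\Omega_0$ on which $\sup_n \max_{j,\ell} (\|Y_j^{(n)}\|_\infty + \|S_\ell^{(n)}\|_\infty) \leq R$. On $\Omega_0$, the noncommutative laws $\lambda_{Y^{(n)}}$ lie in the weak-$*$ compact, metrizable set $\Sigma_{m,R}$.

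Next, I would argue by sample-path subsequences. Fix any monomial $p \in \mathrm{NCP}_{m+m'}$ of the alternating centered form appearing in the definition of asymptotic freeness, and let $Z_n := \tr_n[p(Y^{(n)},S^{(n)})]$. To show $Z_n \to 0$ almost surely, it suffices to show that on $\Omega_0$ every subsequence has a sub-subsequence along which $Z_n \to 0$. Given a subsequence $(n_k)$ and $\omega \in \Omega_0$, extract a further sub-subsequence $(n_{k_\ell})$ along which $\lambda_{Y^{(n_{k_\ell})}(\omega)}$ converges weak-$*$ in $\Sigma_{m,R}$; this is possible by compactness. On this sub-subsequence, $Y^{(n_{k_\ell})}(\omega)$ is a deterministic sequence with bounded operator norm whose law converges, and the GUE matrices $S^{(n)}$ are independent of $Y^{(n)}$.

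Now the classical Voiculescu asymptotic freeness theorem for deterministic matrix sequences (for instance, \cite[Theorem 5.4.2]{anderson2010introduction}) applies conditionally on $Y^{(n)}$: almost surely in the $S$-randomness, the tuple $(Y^{(n_{k_\ell})}(\omega), S^{(n_{k_\ell})}_1,\dots,S^{(n_{k_\ell})}_{m'})$ is asymptotically free, so $Z_{n_{k_\ell}} \to 0$ along this sub-subsequence. A Fubini argument then shows this holds simultaneously for almost every joint sample. Since every subsequence has a sub-subsequence on which $Z_n \to 0$, the full sequence $Z_n \to 0$ almost surely. Applying this to a countable dense collection of monomials $p$ (or noting that for each fixed $p$ the a.s. event has full measure) gives the full asymptotic freeness statement.

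For the last assertion on joint weak-$*$ convergence, observe that under the extra hypothesis that $\lambda_{Y^{(n)}}$ converges almost surely to some $\lambda_Y$, asymptotic freeness already established determines every mixed moment of $(Y^{(n)}, S^{(n)})$ in the limit: alternating centered traces vanish by asymptotic freeness, pure $Y^{(n)}$-moments converge by hypothesis, and pure $S^{(n)}$-moments converge to the semicircle moments by Wigner's law. These data uniquely determine the free product law of $\lambda_Y$ with a free semicircular family, and the limit is independent of subsequence. The main subtlety to verify carefully is the Fubini/conditioning step (the classical theorem gives an a.s. statement in the $S$-randomness which must be combined with the $\omega$-dependent extraction of the $Y$-subsequence), but standard measurable-selection and Borel--Cantelli considerations suffice; everything else is routine combinatorics of Wick's formula buried inside the classical theorem.
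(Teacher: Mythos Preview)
There is a genuine gap in the Fubini/subsequence step, and it is not the routine technicality you suggest. Your argument establishes: for each fixed subsequence $(n_k)$ and each $\omega_Y$ in a full-measure set, there is an $\omega_Y$-\emph{dependent} further subsequence $(n_{k_\ell}(\omega_Y))$ along which, for a.e.\ $\omega_S$, the centered trace vanishes. Even granting a measurable selection of $(n_{k_\ell}(\omega_Y))$ and a clean Fubini, this only yields: for a.e.\ $(\omega_Y,\omega_S)$, the centered trace vanishes along that particular random sub-subsequence. To conclude almost-sure convergence of the \emph{full} sequence $Z_n$ you need, at each fixed sample point, that \emph{every} subsequence has a convergent sub-subsequence; but your construction produces one random sub-subsequence per starting subsequence, and the $\omega_S$-null set depends on it. There are uncountably many starting subsequences, so you cannot intersect the resulting full-measure sets. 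At best your argument delivers convergence in probability (via the subsequence characterization of convergence in probability), and even that requires a \emph{deterministic} sub-subsequence rather than an $\omega_Y$-dependent one.

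The paper avoids this entirely by a different mechanism. It introduces independent Haar unitaries $U_0^{(n)},\dots,U_{m'}^{(n)}$ and uses unitary invariance of the GUE law to reduce to asymptotic freeness of $(U_0 Y U_0^{*}, U_1 S_1 U_1^{*},\dots,U_{m'} S_{m'} U_{m'}^{*})$. Voiculescu's theorem for Haar unitaries against \emph{bounded deterministic} matrices (no convergence-of-law hypothesis needed) gives vanishing of the \emph{expectation} of each alternating centered trace; concentration on $\mathbb U_n^m$ (log-Sobolev $\Rightarrow$ Herbst) then furnishes tail bounds $\le C e^{-c n^2 \delta^2}$ that are summable in $n$ and \emph{uniform} over all deterministic inputs of norm $\le r$. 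This uniformity is precisely what legitimizes the Fubini step: Borel--Cantelli upgrades to almost-sure convergence in the unitary randomness for every fixed realization of $Y^{(n)}$, with a null set independent of that realization, and no $\omega_Y$-dependent subsequence extraction is ever needed. Your compactness route could be repaired, but only by importing this same uniform concentration input, at which point the compactness detour is superfluous.
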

	
	In the special case when the law of $Y^{(n)}$ converges almost surely, the above theorem is stated in \cite[Chapter 4, Theorem 5]{mingo2017free}. Also when  $Y^{(n)}$'s are deterministic, Theorem \ref{thm: asymptotic freeness} is stated in \cite{anderson2010introduction}.  However, similar to \cite[proof of Corollary 2.5]{voiculescu1998strengthened}, the assumption of a limit distribution does not make much difference in the proof.  The method of proof of Theorem \ref{thm: asymptotic freeness} is thus not much different than the other variants:  One can use the convergence of expectations together with concentration of measures.  We include a proof in the appendix \S \ref{sec: asymptotic freeness proof} for completeness.


	\section{Discretization of the control problem} \label{sec: discretization}
	
	In order to pass to the limit, it is useful to discretize the problem both in time and in the common noise.  This is necessary to gather all of the controls in a single von Neumann algebra, and must be done with uniform error over the $n$-dimensional problems.
	
	We fix two discretization parameters $N$ and $K$ for the common noise and time, as well as another parameter $R$ that restricts the operator norm of the control. When unambiguous, we do not include these parameters in the notation. We choose the simplest time discretization of $[t_0, T]$ with $t_{i}=t_0 + \frac{i}{K}(T-t_0)$ for $i\in \{0,1,\ldots,K\}$. Let
	\begin{align}\label{delta}
		\delta:=\frac{T-t_0}{K}
	\end{align}
	be the time increment. 
	

			Next we discretize the increments of the common noise Brownian motion, using equally spaced intervals for the increments. Let $N \ge 2$ be a positive integer.
			For $i\in \{1,\cdots,K\}$, define the events
			\[
			o_{i,j} := \Big\{ \frac{j}{N} < W^0_{t_i} - W^0_{t_{i-1}}  \le \frac{j+1}{N} \Big\}  \ \  \text{ for } j=-N,-N+1,\cdots,N-1
			\]
			and
			\[
			o_{i,-N-1}:=\Big\{W^0_{t_i} - W^0_{t_{i-1}}  \le -1 \Big\}, \qquad 
			o_{i,N}:=\Big\{W^0_{t_i} - W^0_{t_{i-1}}  > 1 \Big\}.
			\]

			For $i\in \{1,\cdots,K\}$, let $\cF^N_{i, \ast}$ be the $\sigma$-algebra generated by  $\big\{o_{i,j}\big\}_{j=-N-1}^{N}$, and  $\cF^N_{0, \ast}$ be the trivial $\sigma$-algebra. Since  $\big\{o_{i,j}\big\}_{j=-N-1}^{N}$ forms a partition of  the probability space  $\Omega$, every element of $\cF^N_{i, \ast}$ is a finite union of elements of  $\big\{o_{i,j}\big\}_{j=-N-1}^{N}$.
			Next, let $\mathcal{F}_{i}^N$ be the $\sigma$-algebra generated by $\cF^N_{0, \ast}, \cdots, \cF^N_{i, \ast}$. Note that $\{\mathcal{F}_{i}^N\}_{i=0,\cdots,K}$ forms a filtration.  We set $[N] := \{-N-1,\ldots, N\}$ so that $[N]^i$ denotes the space of multi-indices with $i$ components. Any element of $\mathcal{F}_{i}^N$ is a union of sets of the form $o_{1,m_1} \cap \cdots \cap o_{i, m_i}$, where $m_1,\cdots, m_i \in [N]$. 
			
			For $i\in \{1,\ldots, K\}$ and $J:=(j_1, \cdots, j_K)\in [N]^K$, define the event
			\[
			O_{i,J}:=\cap_{p=1}^i o_{p,j_{p}}.
			\]
			Note that if $J:=(j_1, \cdots, j_K)\in [N]^K$ and $J' = (j_1', \cdots, j_K')\in [N]^K$ satisfy that $j_p = j_p'$ for any $p\in \{1,\ldots, i\}$, then $O_{i,J} = O_{i,J'}$. Throughout this section, by abusing the notations, we will also use the notation $O_{i, J}$ for a multi-index $J\in [N]^i$ to denote the common value of $O_{i,\tilde{J}}$ when $\tilde{J}$ is an element in $[N]^K$ whose first $i$ coordinates are $J$.
			\begin{remark} \label{conditional}
				For $i \in \{1, \cdots, K\}$, as $\cF^N_{i, \ast}$ and $\cF^N_{i-1}$ are independent,  for any function $\Phi:\mathbb{R}\rightarrow \mathbb{R},$
				\[ \bE\big[ \Phi(W^0_{t_i}-W^0_{t_{i-1}})  \mid \cF^N_{i}\big] = \bE\big[ \Phi(W^0_{t_i}-W^0_{t_{i-1}}) \mid \cF^N_{i, \ast}\big] .
				\]
				Similarly, as future increments are independent of  $\cF^N_{i}$,
				\begin{align*}
					\bE\big[ \Phi(W^0_{t_i}-W^0_{t_{i-1}}) \mid 
					\mathcal{F}_K^N \big] =\bE\big[ \Phi(W^0_{t_i}-W^0_{t_{i-1}}) \mid  \cF^N_{i}\big].
				\end{align*}
			\end{remark}
			For any $i\in \{1,\ldots, K\}$ and $J = (j_1,\cdots,j_K)\in [N]^K$, we define
			\begin{align} \label{omega}
				\omega_{i,J} := \mathbb{E}\big[W_{t_i}^0 - W_{t_{i-1}}^0 \mid  O_{i,J}],
			\end{align}
			or equivalently the value  $
			\mathbb{E}\big[W_{t_i}^0 - W_{t_{i-1}}^0 \mid   \mathcal{F}_i^N] = \mathbb{E}\big[W_{t_i}^0 - W_{t_{i-1}}^0 \mid   \mathcal{F}_K^N]$
			on the event  $O_{i,J}.$ Note that for sufficiently small $\delta>0,$ for all $i$ and $J$,
			\begin{align} \label{605}
				|    \omega_{i,J}| \le 2. 
			\end{align}
			This is an easy consequence of Gaussian computation. Indeed, when $-N \le j_i \le N-1$, recalling the definition of $o_{i,j}$, we have $   |    \omega_{i,J}| \le \frac{1}{N}$. In the case $j_i=N,$ as $(W^0_{t_i}-W^0_{t_{i-1}}) / \sqrt{\delta}$ is distributed as the standard Gaussian which we call $Z$ (see \eqref{delta} for the definition of time increment $\delta$), using  Lemma \ref{gauss} in the appendix,
			\begin{align*}
				0\le    \omega_{i,J} = \mathbb E [ \sqrt{\delta}Z \mid  \sqrt{\delta}Z >1] = \sqrt{\delta}  \mathbb  E \Big [ Z  \ \big\vert \ Z >\frac{1}{\sqrt{\delta}}\Big]  \le \sqrt{\delta}  \cdot \frac{2}{\sqrt{\delta}}=2. 
			\end{align*}
			The case $j_i = -N-1$ similarly follows.


			We claim that 
			\begin{align} \label{6000}
				\bE [|W_{t_i}^0 - W_{t_{i-1}}^0 - w_{i,J} | \mid O_{i,J}] \le 
				\begin{cases}
					N^{-1}&\qquad j_i \in \{-N,\cdots,N-1\}, \\
					\sqrt{\delta} &\qquad j_i = -N-1 \text{ or } N.
				\end{cases}
			\end{align}
			The first case is obvious since the oscillation of $W^0_{t_i} - W^0_{t_{i-1}} $ is at most $N^{-1}.$ Considering the second case, by H\"older's inequality,
			\begin{align}\label{601}
				\bE [|W_{t_i}^0 - W_{t_{i-1}}^0 - w_{i,J} | \mid  O_{i,J} ] \le    \sqrt{\text{Var} (W_{t_i}^0 - W_{t_{i-1}}^0 \mid O_{i,J}  } )  .
			\end{align}
			By Remark \ref{conditional}, 
			\begin{align*}
				\text{Var} (W_{t_i}^0 - W_{t_{i-1}}^0 \mid O_{i,J})= \text{Var} (W_{t_i}^0 - W_{t_{i-1}}^0 \mid o_{i,j_i} ).
			\end{align*} 
			As $(W^0_{t_i}-W^0_{t_{i-1}}) / \sqrt{\delta}$ is distributed as the standard Gaussian,  for $j_i=-N-1$ or $j_i=N$,  
			\begin{align} \label{603}
				\text{Var} (W_{t_i}^0 - W_{t_{i-1}}^0 \mid o_{i,j_i} ) = \delta  \cdot \text{Var} \Big({W^0_{t_i}-W^0_{t_{i-1}} \over \sqrt \delta} \ \big\vert \ o_{i,j_i} \Big)   \le \delta ,
			\end{align}
			where we used  Lemma \ref{cond variance} (in Appendix) in the above inequality.
			This along with \eqref{601} proves \eqref{6000}.

	~

	
	Now, we define the discretized control space $\mathbb{A}_{\cA, t_0,x_0}^{K,N,R}$   as follows.

	{{
			%
			\begin{definition}\label{defn:july09.2024.1}
				Let $\cA \in \bW$, $N, K \in \bN$ and $R>0$. Given $t_0\in [0,T)$ and $x_0 \in L^2(\cA)^d_{\textup{sa}}$, we call 
				\[ 
				\widetilde {\alpha}^{\textsf{dis}}=\Big( (\alpha_{i,J})_{i,J},  {(\cA_{t_i})_{i=0}^K}, {(S_{t_{i}})_{i=0}^K} \Big)
				\]
				a discrete policy belonging to $\mathbb{A}_{\cA, t_0,x_0}^{K,N,R}$ if the following conditions hold:  
				\begin{enumerate}[label=(\alph*)]
					\item {$(\cA_{t_i})_{i=0}^K$ is a free filtration  in $\cA$ and $x_0 \in L^2(\cA_{t_0})$.}
					\item  {$(S_{t_{i}} - S_{t_{i-1}})_{i=1}^K$ is a free semcircular family, such that $S_{t_{i}} - S_{t_{i-1}} \in \mathcal{A}_{t_{i}}$ has a mean zero and variance $t_{i} - t_{i-1}$, freely independent of $\mathcal{A}_{t_{i-1}}$.}
					\item  $\alpha:   \{1, \cdots, K\} \times [N]^K \to L^\infty(\cA)^{d}_{\textup{sa}}$ is such that for every $i \in \{1, \cdots, K\}$ and $J=(j_1, \cdots, j_K) \in [N]^K$, we have $\alpha_{i,J} \in  L^\infty(\cA_{t_i})^{d}_{\textup{sa}}$. If $J'=(j_1', \cdots, j_K') \in [N]^K$ is such that $j_p=j_p'$ for all $p \in \{1, \cdots, i\},$ then $\alpha_{i,J} =\alpha_{i,J'}.$ Hence we will also write $\alpha_{i,J}$ for $J\in [N]^i$, which is well defined. Furthermore, $\sup_{i,J}\|\alpha_{i,J}\|_{L^\infty(\cA)}\leq R.$ We call $(\alpha_{i,J})_{i,J}$ a \emph{discretized control} which we identify with 
					\begin{equation}\label{eq:discret-controlWG}
						(t, w) \to  \alpha_t(w)=\sum_{i=1}^{K-1}  \mathbbm{1}_{(t_{i}, t_{i+1}]}(t) \sum_{J \in [N]^i}\alpha_{i,J} \mathbbm{1}_{O_{i,J}}(w).
					\end{equation}
					Observe that $  \alpha_{t_i} \in L^2(\cF_{t_i}^N)_{\textup{sa}}^d$ for every $i$.
				\end{enumerate}

				
				
				
				
			\end{definition}
			Note that (c) expresses an adaptedness property, and $\alpha_{i, J}$ is the control value on $O_{i, J}$.
	}}
	
	~
	
	We similarly define $W_{i, J}^0$ to be the discrete version of the Brownian motion, i.e., for  $i \in \{1, \cdots, K\}$  and  $J=(j_1, \cdots, j_i)\in [N]^i$, 
	\begin{align} \label{disbm}
		W^0_{i, J} := \sum_{i'=1}^i \omega_{i',J} ,
	\end{align}
	where $\omega_{i',J}$ is defined in \eqref{omega}.
	In this way, we treat the common noise as if it were deterministic. 
	The corresponding discretized dynamics in the von Neumann algebra $L^2(\cA)^d_{\textup{sa}}$ is defined by
	\begin{align}\label{eqn:discrete_dynamics}
		X_{i,J} = x_0 + \sum_{i'=1}^i \alpha_{i',J}\, \delta + \beta_C\, \mathbbm{1}_{\cA} W_{i,J}^0 + \beta_F\, (S_{t_i} -{{S_{t_0})}}.
	\end{align}
	We then define the discretized value function cost in a similar way to the continuous-time version by
	$$
	V^{K,N,R}_{\cA}(t_0,x_0) := \inf_{\alpha \in \mathbb{A}^{K,N,R}_{\cA,t_0,x_0}}\Bigg\{ \sum_{i=1}^K \sum_{J\in [N]^i} \bP(O_{i,J}) \, L_{\cA}(X_{i,J}, \alpha_{i,J})\, \delta + \sum_{J\in [N]^K}\bP(O_{K,J})\, g_{\cA}(X_{K,J})\Bigg\},
	$$
	Note that if $\cA \in \bW$ does not admit any {discrete free filtration $(\cA_{t_i})_{i=0}^K$} then by convention $V^{K,N,R}_{\cA}(t_0,x_0)=+\infty$. 
	
	{In order to compare the discrete and continuous problems, we first explain how to extend the discrete filtration into a continuous filtration on a potentially larger tracial von Neumann algebra.  In fact, we can even arrange that the conditional expectations with respect to the filtration on the larger algebra extend those from the discrete filtration (point (2) of the lemma below); while this is not needed for our argument in \S \ref{sec: convergence}, we include it for the sake of future use.
		
		\begin{lemma}  \label{lem: extend filtration}
			Let $(\cA_{t_i})_{i=0}^K$ be a discrete free filtration  in $\cA \in \bW$, and let $(S_{t_{i}})_{i=0}^K$ be a discretized semi-circular process satisfying Definition \ref{defn:july09.2024.1} (b).  Then there exist a tracial $W^*$-algebra $\cB \supseteq \cA$, a filtration $(\cB_t)_{t \in [t_0,T]}$ in $\cB$, and a semi-circular process $(Z_t)_{t \in [t_0,T]}$ compatible with this filtration such that
			\begin{enumerate}
				\item $Z_{t_i} = S_{t_i}$ for $i = 0$, \dots, $K$.
				\item For $a \in \cA$ and $i = 0$, \dots, $K$, we have $E_{\cA_{t_i}}[a] = E_{\cB_{t_i}}[a]$.
			\end{enumerate}
			In fact, we can take $\mathcal{B} = \mathcal{A} * \mathcal{C}$ where $\mathcal{C}$ is a tracial von Neumann algebra generated by an infinite free semi-circular family.
		\end{lemma}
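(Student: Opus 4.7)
The plan is to construct $(Z_t)$ via free Brownian bridge interpolation between the prescribed values $(S_{t_i})_{i=0}^K$, taking $\cC$ as the free product of $dK$ mutually free Brownian motion algebras. Concretely, I let $\cC$ be generated by $d$-tuples $(\tilde{B}^{l,(i)}_t)_{t \in [t_{i-1}, t_i]}$ of free Brownian motions, mutually free across $l \in \{1, \dots, d\}$ and $i \in \{1, \dots, K\}$, each starting from $0$ at time $t_{i-1}$; since each free Brownian motion generates a free semi-circular family and they are all mutually free, $\cC$ is generated by an infinite free semi-circular family. Set $\cB := \cA * \cC$. For each $l$ and $i$, define the free Brownian bridge $\beta^{l,(i)}_t := \tilde{B}^{l,(i)}_t - \tfrac{t - t_{i-1}}{\delta_i}\tilde{B}^{l,(i)}_{t_i}$ for $t \in [t_{i-1}, t_i]$, and set
\[
Z^l_t := S^l_{t_{i-1}} + \tfrac{t - t_{i-1}}{\delta_i}(S^l_{t_i} - S^l_{t_{i-1}}) + \beta^{l,(i)}_t.
\]
Then $Z^l_{t_i} = S^l_{t_i}$ is immediate, giving condition (1).

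Next I would verify that $(Z_t)$ is a $d$-dimensional free Brownian motion. Since $\cA$ and $\cC$ are free, the union of the semi-circular family $(S^l_{t_i} - S^l_{t_{i-1}})_{i, l} \subset \cA$ and the semi-circular family $(\tilde{B}^{l,(i)}_t)_{l, i, t} \subset \cC$ is itself a joint semi-circular family, so $(Z^l_t)_{t, l}$, built from linear combinations, is a semi-circular process. A direct covariance computation using $\tau(\beta^{l,(i)}_r \beta^{l,(i)}_{r'}) = (r - t_{i-1})(t_i - r')/\delta_i$ for $r \le r'$ in the same interval, together with orthogonality of the three summands and across components, yields $\tau(Z^l_s Z^{l'}_t) = \delta_{ll'}(s \wedge t - t_0)$. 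Since a semi-circular process is determined by its covariance, this identifies $(Z_t)$ as a $d$-dimensional free Brownian motion.

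For the filtration I set $\cB_t := \cA_{t_{i-1}} \vee W^*((Z_r)_{r \le t})$ for $t \in [t_{i-1}, t_i)$ and $\cB_{t_K} := \cA_{t_K} \vee W^*((Z_r)_{r \le t_K})$, so that $\cB_{t_i} = \cA_{t_i} \vee W^*((Z_r)_{r \le t_i})$ at each discrete time. This is non-decreasing since $\cA_{t_{i-1}} \subseteq \cA_{t_i}$, and $Z_t \in \cB_t$ is automatic. For condition (2), note that $W^*((Z_r)_{r \le t_i})$ is generated by the endpoints $\{S_{t_j}\}_{j \le i} \subseteq \cA_{t_i}$ and by the bridges $\{\beta^{l,(j)}: j \le i, l \le d\} \subseteq \cC$, since $S_{t_j} = Z_{t_j}$ and then $\beta^{l,(j)}_r$ can be solved for in terms of $Z_r^l$, $S^l_{t_{j-1}}$ and $S^l_{t_j}$. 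Hence $\cB_{t_i} = \cA_{t_i} \vee W^*(\beta^{l,(j)}: j \le i, l \le d)$, the second factor lies in $\cC$ and is therefore free from $\cA$, and the standard identity $E_{\cA_1 \vee \cC'}[a] = E_{\cA_1}[a]$ for $\cA_1 \subseteq \cA$, $a \in \cA$, and $\cC'$ free from $\cA$ then gives $E_{\cB_{t_i}}[a] = E_{\cA_{t_i}}[a]$.

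The most delicate condition to check is (d): $Z_t - Z_s$ freely independent of $\cB_s$ for $s \le t$. For $s \in [t_{j-1}, t_j)$ one decomposes $\cB_s = \cA_{t_{j-1}} \vee W^*(\beta^{l,(k)}: k < j, l \le d) \vee W^*(Z_r - Z_{t_{j-1}}: r \in [t_{j-1}, s])$, and one introduces the ``future'' algebra $\cD := W^*(Z_r - Z_{t_{j-1}}: r \ge t_{j-1})$, which lies inside $W^*((S_{t_k} - S_{t_{k-1}})_{k \ge j}) \vee W^*(\tilde{B}^{l,(k)}: k \ge j, l \le d)$. The discrete free filtration hypothesis gives $\cA_{t_{j-1}}$ free from $W^*((S_{t_k} - S_{t_{k-1}})_{k \ge j})$ within $\cA$; combined with $\cB = \cA * \cC$, both $\cA_{t_{j-1}}$ and $W^*(\beta^{l,(k)}: k < j, l \le d)$ are free from $\cD$, and they are mutually free (one in $\cA$, the other in $\cC$). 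Within $\cD$, the time-shifted process $r \mapsto Z_r - Z_{t_{j-1}}$ is itself a free Brownian motion, so $Z_t - Z_s$ is free from its natural filtration $W^*(Z_r - Z_{t_{j-1}}: r \in [t_{j-1}, s])$ inside $\cD$. The standard associativity of free independence then yields $Z_t - Z_s$ free from $\cB_s$. The main obstacle is the careful bookkeeping of these freeness relations across intervals, but the required freeness is entirely dictated by the free product structure of $\cB$ together with the discrete free filtration property.
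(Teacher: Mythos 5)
Your proof is correct but takes a genuinely different route from the paper's. The paper builds $\cB$ top-down as an amalgamated free product: it takes an abstract free Brownian motion algebra $\mathrm{W}^*(Z_t: t \in [t_0,T])$, views $\mathrm{W}^*(S_{t_i}:i)$ as a subalgebra via $S_{t_i}\mapsto Z_{t_i}$, sets $\cB := \cA *_{\mathrm{W}^*(S_{t_i}:i)} \mathrm{W}^*(Z_t:t)$, and then identifies $\cB \cong \cA * \cC_1 * \cdots * \cC_K$ by combining the interval splitting $\mathrm{W}^*(Z_t - Z_{t_i}: t\in[t_i,t_{i+1}]) = \mathrm{W}^*(Z_{t_{i+1}}-Z_{t_i}) * \cC_i$ with \cite[Proposition 4.1]{houdayer2007freeproducts}. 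You instead build $Z$ bottom-up by free Brownian bridge interpolation between the prescribed endpoints $S_{t_i}$, adjoining auxiliary Brownian motions $\widetilde B^{l,(i)}$ through a plain free product. This sidesteps both the amalgamated free product formalism and the Houdayer isomorphism and makes the covariance of $Z$ explicit; in effect your bridge algebras $\mathrm{W}^*(\beta^{l,(i)}: l)$ are a concrete realization of the paper's abstract complements $\cC_i$, and your verification of (2) is the same orthogonality argument the paper obtains from its Lemma C.4. One spot to tighten in a write-up: when checking condition (d) you list pairwise freenesses of $\cA_{t_{j-1}}$, $\mathrm{W}^*(\beta^{(k)}:k<j)$, and $\cD$ and then invoke ``associativity,'' but pairwise freeness of three algebras does not give joint freeness. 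The argument that actually works, which you gesture at, is that $\cA_{t_{j-1}}$ and $\mathrm{W}^*(S_{t_k}-S_{t_{k-1}}:k\ge j)$ are \emph{jointly} free in $\cA$ (an induction from Definition \ref{defn:july09.2024.1}(b)), $\mathrm{W}^*(\widetilde B^{(k)}:k<j)$ and $\mathrm{W}^*(\widetilde B^{(k)}:k\ge j)$ are jointly free in $\cC$, and $\cB = \cA*\cC$ then makes all four jointly free; the join of the first and third (your $\cM$) is therefore free from the join of the second and fourth, which contains $\cD$, and the free-product factorization of the shifted Brownian motion algebra inside $\cD$ finishes the step.
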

		
		\begin{proof}
			Consider the algebra $\mathrm{W}^*(Z_t: t \in [t_0,T])$ generated by a free semi-circular Brownian motion.  View $\mathrm{W}^*(S_{t_i}: i = 0, \dots, K)$ as a subalgebra of $\mathrm{W}^*(Z_t: t \in [t_0,T])$ by identifying $S_{t_i}$ with $Z_{t_i}$, and then take the amalgamated free product
			\[
			\mathcal{B} := \cA \;\;\, \median_{\mathrm{W}^*(S_{t_i}: i = 0, \dots, K)}\;\;\, \mathrm{W}^*(Z_t: t \in [t_0,T]).
			\]
			Furthermore, let
			\[
			\mathcal{B}_t := \mathrm{W}^*(\mathcal{A}_{t_i}, Z_s: s \leq t) \text{ for } t \in [t_i,t_{i+1}).
			\]
			In order to prove the property (2), we first recall that there is a von Neumann algebra $\mathcal{C}_i$ such that
			\[
			\mathrm{W}^*(S_t - S_{t_i}: t \in [t_i,t_{i+1}]) = \mathrm{W}^*(S_{t_{i+1}} - S_{t_i}) * \mathcal{C}_i;
			\]
			this follows from the construction of the free Brownian motion using the free Gaussian process over a Hilbert space (which is analogous to the construction of classical Brownian motion using an orthonormal basis of $L^2[0,T]$); see \cite[\S 3]{voiculescu1985symmetries}, \cite[\S 2.6]{voiculescu1992freerandom}.  We thus have
			\[
			\mathcal{B} = \cA *_{\mathrm{W}^*(S_{t_i}: i = 0, \dots, K)} (\mathrm{W}^*(S_{t_i}: i = 0, \dots, K) * \mathcal{C}_1 * \dots * \mathcal{C}_K) \cong \cA * \mathcal{C}_1 * \dots * \mathcal{C}_K;
			\]
			this follows from \cite[Proposition 4.1]{houdayer2007freeproducts}.
			We also write $\mathcal{C} = \mathcal{C}_1 * \dots * \mathcal{C}_K$.
			
			Now fix $i$.  Since $\mathcal{A}$ is freely independent of the $\mathcal{C}_j$'s, we have that
			\[
			\mathcal{B}_{t_i} = \cA_{t_i} * \mathcal{C}_1 * \dots * \mathcal{C}_i
			\]
			Thus, for $a \in \cA$, we have
			\[
			E_{\cB_{t_i}}[a] = E_{\cA_{t_i} * \mathcal{C}_1 * \dots * \mathcal{C}_i}[a] = E_{\cA_{t_i}}[a];
			\]
			indeed, this follows because $a - E_{\cA_{t_i}}[a]$ is orthogonal to $\cA_{t_i} * \mathcal{C}_1 * \dots * \mathcal{C}_i$ by the proof of  \cite[Lemma C.4]{2025viscosity}.
		\end{proof}
	}

	{{
			Similarly, we introduce a discretized version of the random matrix control problem where the individual noise is GUE($n$) Brownian motion.  The set of control policies $\widehat{\mathbb{A}}_{M_n(\mathbb{C}),t_0}^{K,N,R}$ for this problem is defined as follows.
			\begin{definition}\label{defn:july09.2024.2} Let $N, K, n \in \bN$ and $R>0$. Given $ t_0 \in [0,T] $, we call 
				\[ 
				\widetilde \alpha^{\textsf{dis},n}=\Big( (\alpha_{i,J}^n)_{i,J},  ( \widehat{\mathcal{F}}^n_{i})_{i=1}^K \Big)
				\]
				a discrete policy belonging to $\widehat{\mathbb{A}}_{M_n(\mathbb{C}),t_0}^{K,N,R}$ if   the following conditions hold: 
				\begin{enumerate}[label=(\alph*)]
					\item Denoting by  $(\widehat{W}^n_t)_{t \ge 0}$  the GUE($n$) Brownian motion, for $i \in \{1,\cdots,K\},$ $\widehat{\mathcal{F}}^n_{i}$ is the $\sigma$--algebra  generated by  
					$\{\widehat{W}^n_{t_1}-\widehat{W}^n_{t_0}, \ldots, \widehat{W}^n_{t_i}-\widehat{W}^n_{t_{i-1}}\}$.
					\item 
					For $i \in \{1, \cdots, K\}$ and $J=(j_1, \cdots, j_K) \in [N]^K$, we have $\alpha^n_{i,J} \in  L^\infty\big(\widehat{\Omega}, \widehat{\mathcal{F}}^n_{{i}}, \widehat \bP; M_n (\bC)^d_{\textup{sa}}\big)$. If $J'=(j_1', \cdots, j_K') \in [N]^K$ is such that $j_p=j_p',$ for all $p \in \{1, \cdots, i\},$ then $\alpha^n_{i,J} =\alpha^n_{i,J'}.$ Furthermore, $\sup_{i,J}\|\alpha^n_{i,J}\|_{M_n(\mathbb{C})_{\textup{sa}}^d}\leq R$, $\widehat \bP$-a.s (here, $\|\cdot\|_{M_n(\mathbb{C})_{\textup{sa}}^d}$ denotes the maximum of $L^2$-norms in $d$ coordinates). 
				\end{enumerate}
			\end{definition}
	}}
	The discrete dynamics  in $M_n(\mathbb{C})_{\textup{sa}}^d$ is similarly then given by
	\begin{align}\label{eqn:discrete_dynamics_matrix}
		X_{i,J}^{n} = x_0^n + \sum_{i'=1}^i \alpha_{i',J}^{n}\, \delta + \beta_C\,\mathbbm{1}_{M_n(\mathbb{C})}  W_{i, J}^0 + \beta_F\, (\widehat{W}^n_{t_i}- \widehat{W}^n_{t_0}),
	\end{align}
	for  $i \in \{1, \cdots, K\}$  and  $J\in [N]^i$.
	
	~
	
	
	We then define the cost for the discretized random matrix problem as follows:  For  $x_0^n \in M_n(\mathbb{C})_{\textup{sa}}^d$,
	\begin{align*}
		\widehat{V}^{K,N,R}_{M_n(\mathbb{C})}(t_0,x_0^n) := \inf_{\alpha^n \in \widehat{\mathbb{A}}^{K,N,R}_{M_n(\mathbb{C}), t_0}}   \Bigg\{ \widehat {\mathbb{E}} \Big[ 
		\sum_{i=1}^K  \sum_{J\in [N]^i}   \bP(O_{i,J}) & \, L_{M_n(\mathbb{C})}(X_{i,J}^{n}, \alpha_{i,J}^{n})\, \delta  \\
		&+  \sum_{J\in [N]^K}   \bP(O_{K,J})\, g_{M_n(\mathbb{C})}(X^n_{K,J}) \Big] \Bigg\}.
	\end{align*}

	{{
			\begin{remark}\label{exo:july24.2024.1} The following facts will be useful later.
				\begin{enumerate}
					\item[(1)] If the discrete control in $\mathbb{A}_{\cA, t_0,x_0}^{K,N,R}$ takes null values, then  the corresponding discretized dynamics is 
					\[
					X_{i,J}= x_0 +  \beta_C\, \mathbbm{1}_{\cA} W_{i,J}^0 + \beta_F\, (S_{t_i} - S_{t_0}).
					\] 
					This can be used to deduce the following bound on the value function
					\begin{equation}\label{eq:july23.2024.9}
						V^{K,N,R}_{\cA}(t_0,x_0) \leq \Big(C_1+\|x_0\|_{L^2(\cA)}+ (\beta_C+\beta_F) \sqrt{T}\Big)(T+1).
					\end{equation}
					\item[(2)] If $(\alpha_{i,J})_{(i,J)}$  is a control in $\mathbb{A}_{\cA, t_0,x_0}^{K,N,R}$, then using \eqref{eqn:lower_and_upper_bounds}, 
					\begin{align}
						& \sum_{i=1}^{K}\sum_{J\in [N]^i} \bP(O_{i, J})\| \alpha_{i,J}\|^2_{L^2(\cA)}\delta -2C_1^2T \nonumber\\
						\leq &C_1\bigg( \sum_{i=1}^K\sum_{J\in [N]^i} \bP(O_{i, J}) \sum_{i=1}^K L_{\cA}(X_{i,J}, \alpha_{i,J})\delta + \sum_{J\in [N]^K} \bP(O_{K, J}) g_{\cA}(X_{K,J})\bigg) .\label{eq:july26.2024.2}
					\end{align}
					In particular, if $( \alpha_{i,J})_{(i,J)}$ is an $\epsilon$-optimal discrete control in $\mathbb{A}_{\cA, t_0,x_0}^{K,N,R}$ for $\epsilon>0$, then
					\begin{equation}\label{eq:july26.2024.2c}
						\sum_{i=1}^{K}\sum_{J\in [N]^i} \bP(O_{i, J})\| \alpha_{i,J}\|^2_{L^2(\cA)}\delta  \leq C_1\Big(\epsilon+2TC_1+ V^{K,N,R}_{\cA}(t_0,x_0) \Big).
					\end{equation}
					
				\end{enumerate}
			\end{remark}
			
	}}
	
	
	
	As the first step toward proving convergence for the discretization, the following lemma analyzes the operator-norm cutoff.
	
	\begin{lemma}[Uniform operator norm truncation]\label{lem:operator_norm_truncation}
		Let $L_{\cA}$ be a Lagrangian of the form $L_{\cA}(x,\alpha) = L_{\cA}^0(x,\alpha) + c \norm{\alpha}_{L^2(\cA)}^2$, where $c \geq 0$ and $L_0$ is {{$\kappa$}}-Lipschitz ($\kappa>0$) with respect to $\norm{\cdot}_{L^1(\cA)}$ in both $x$ and $\alpha$, i.e., satisfying Assumption \textbf{C}. For $R>0$, let $\phi_R(t) := \max(\min(t,R),-R)$.  Then for any continuous $Y = (Y_t)_t: [0,T] \rightarrow L^2(\cA)$ and $\alpha = (\alpha_t)_t \in L^2([0,T],L^2(\cA))$,
		\begin{align*}
			\int_0^T L_{\cA}\Big(Y_t + \int_0^t \phi_R(&\alpha_s)\,ds,  \phi_R(\alpha_t)\Big)\,dt \\
			&\leq \int_0^T L_{\cA}\Big(Y_t + \int_0^t \alpha_s\,ds, \alpha_t \Big) \,dt +{{(1+T) \kappa \over R}} \int_0^T \norm{\alpha_t}_{L^2(\cA)}^2\,dt.    
		\end{align*}
		
	\end{lemma}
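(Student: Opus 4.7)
The plan is to decompose the cost into the Lipschitz part $L^0_\cA$ and the pure quadratic control penalty $c\|\alpha\|_{L^2(\cA)}^2$, handle each separately, and then control everything by the pointwise estimate $|\phi_R(s)-s|\le s^2/R$ for $s\in\mathbb{R}$.

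First I would establish the scalar inequality $|\phi_R(s)-s|\le s^2/R$ for every $s\in\mathbb{R}$: if $|s|\le R$ the left side is zero, and if $|s|>R$ then $|s|-R\le |s|\le s^2/R$. Applied by continuous functional calculus to the self-adjoint $d$-tuple $\alpha_t$ component-wise, this yields
\[
\|\phi_R(\alpha_t)-\alpha_t\|_{L^1(\cA)}\le \frac{1}{R}\|\alpha_t\|_{L^2(\cA)}^2.
\]
Simultaneously, because $|\phi_R(s)|\le |s|$, functional calculus gives $\|\phi_R(\alpha_t)\|_{L^2(\cA)}^2\le \|\alpha_t\|_{L^2(\cA)}^2$, so the pure quadratic term can only decrease under truncation.

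Next I would set $X_t:=Y_t+\int_0^t\alpha_s\,ds$ and $X_t^R:=Y_t+\int_0^t\phi_R(\alpha_s)\,ds$, and use the triangle inequality in $L^1(\cA)$ together with the previous estimate to get
\[
\|X_t^R-X_t\|_{L^1(\cA)}\le \int_0^t\|\phi_R(\alpha_s)-\alpha_s\|_{L^1(\cA)}\,ds\le \frac{1}{R}\int_0^t\|\alpha_s\|_{L^2(\cA)}^2\,ds.
\]
Then apply the $L^1$-Lipschitz property of $L^0_\cA$ stated in Assumption \textbf{C} to obtain the pointwise bound
\[
L^0_\cA(X_t^R,\phi_R(\alpha_t))\le L^0_\cA(X_t,\alpha_t)+\kappa\bigl(\|X_t^R-X_t\|_{L^1(\cA)}+\|\phi_R(\alpha_t)-\alpha_t\|_{L^1(\cA)}\bigr).
\]
Adding the monotonicity of the quadratic term converts this into the same inequality with $L_\cA$ in place of $L^0_\cA$.

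Finally, I would integrate in $t$ over $[0,T]$. The control contribution is immediate, giving $(\kappa/R)\int_0^T\|\alpha_t\|_{L^2(\cA)}^2\,dt$. For the state contribution I would swap the order of integration via Fubini:
\[
\int_0^T\|X_t^R-X_t\|_{L^1(\cA)}\,dt\le \frac{1}{R}\int_0^T(T-s)\|\alpha_s\|_{L^2(\cA)}^2\,ds\le \frac{T}{R}\int_0^T\|\alpha_s\|_{L^2(\cA)}^2\,ds,
\]
so the two contributions combine to the claimed factor $(1+T)\kappa/R$.

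I do not anticipate a serious obstacle here; the whole argument rests on the elementary pointwise inequality $|\phi_R(s)-s|\le s^2/R$ and the fact that $\phi_R$ is a contraction of $\mathbb{R}$, both of which lift to non-commutative functional calculus. The only bookkeeping point to be careful about is the tuple convention for the $L^1$ and $L^2$ norms (so that the componentwise application of $\phi_R$ and the componentwise spectral estimate are compatible with the norm used in the Lipschitz assumption \eqref{eqn:L-1-Lipschitz}); this is a routine check rather than a real difficulty.
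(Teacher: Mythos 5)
Your proof is correct and follows essentially the same route as the paper's: both rest on the $L^1$-estimate $\norm{\phi_R(\alpha_t)-\alpha_t}_{L^1(\cA)}\le \norm{\alpha_t}_{L^2(\cA)}^2/R$, the contraction property $\norm{\phi_R(\alpha_t)}_{L^2(\cA)}\le\norm{\alpha_t}_{L^2(\cA)}$, and the $L^1$-Lipschitz assumption on $L^0_\cA$. The only cosmetic differences are the form of the scalar inequality used to derive the $L^1$-estimate (you use $|\phi_R(s)-s|\le s^2/R$ pointwise, the paper uses $|s-\phi_R(s)|\le \mathbf{1}_{|s|>R}|s|$ and integrates against the spectral measure; both yield the same bound) and the final integration step (you use Fubini to get the factor $T-s\le T$, while the paper simply bounds $\int_0^t\le\int_0^T$ uniformly in $t$ before integrating). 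Both give the advertised constant $(1+T)\kappa/R$.
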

	
	\begin{proof}
		Let $y$ be any self-adjoint element in $\cA$ and $\mu$ be its spectral measure with respect to the trace.  Note that
		\[
		|s - \phi_R(s)| \leq \mathbf{1}_{\bR \setminus (-R,R)}(s) |s|.
		\]
		Thus,
		\[
		R \norm{y - {\phi_R(y) } }_{L^1(\cA)} = R \int_{\bR} |s - \phi_R(s)|\,d\mu(s) \leq \int_{\bR \setminus (-R,R)} |s|^2\,d\mu(s) \leq \norm{y}_{L^2(\cA)}^2,
		\]
		and so
		\[
		\norm{y - \phi_R(y)}_{L^1(\cA)} \leq \frac{\norm{y}_{L^2(\cA)}^2}{R}.
		\]
		Similar, and easier, reasoning shows that $\norm{\phi_R(y)}_{L^2(\cA)} \leq \norm{y}_{L^2(\cA)}$.  Now applying this to the process $(\alpha_t)_t$,  
		\[
		\int_0^T \norm{\alpha_t - \phi_R(\alpha_t)}_{L^1(\cA)}\,dt \leq \int_0^T \frac{1}{R} \norm{\alpha_t}_{L^2(\cA)}^2\,dt.
		\]
		In particular, letting $A_t := \int_0^t \alpha_s\,ds$ and $\widetilde{A}_t := \int_0^t \phi_R(\alpha_s)\,ds$,  
		\[
		\norm{A_t - \widetilde{A}_t}_{L^1(\cA)} \leq \int_0^t \norm{\alpha_s - \phi_R(\alpha_s)}_{{L^1(\cA)}} \le  \frac{1}{R} \int_0^t \norm{\alpha_s}_{L^2(\cA)}^2\,ds \le  \frac{1}{R} \int_0^T \norm{\alpha_s}_{L^2(\cA)}^2\,ds.
		\]
		Since $L_{\cA}^0$ is Lipschitz in $L^1(\cA)$, for any $t\in [0,T],$
		\begin{align*}
			L_{\cA}^0(Y_t + \widetilde{A}_t, \phi_R(\alpha_t)) &\leq L_{\cA}^0(Y_t + A_t, \alpha_t) + {{\kappa}} \norm{A_t - \widetilde{A}_t}_{L^1(\cA)} + {{\kappa}} \norm{\alpha_t - \phi_R(\alpha_t)}_{L^1(\cA)} \\
			&\leq L_{\cA}^0(Y_t + A_t, \alpha_t) + {{ {\kappa\over R}}} \int_0^T \norm{\alpha_s}_{L^2(\cA)}^2\,ds +  {{{\kappa\over R}}}
			\norm{\alpha_t}_{L^2(\cA)}^2.
		\end{align*}
		Integrating this over $t \in [0,T]$, we  obtain the asserted statement when $c = 0$.  In the case when $c > 0$, we combine the above argument with the fact that
		\[
		\norm{\phi_R(\alpha_t)}_{L^2(\cA)}^2 \leq \norm{\alpha_t}_{L^2(\cA)}^2.
		\]
	\end{proof}

	Now, we establish the following key proposition, which states that the discretized optimization problem approximates the original value function. Recall that for $\cA \in \bW$, $\widetilde{V}_{\cA}$ denotes the value function defined in \eqref{eqn:l2_value}. 
	\begin{proposition}\label{prop:discretization}
		Suppose that Assumptions \textbf{A}, \textbf{B} and \textbf{C} hold.
		Then there exists a constant $C>0$ such that the following holds: For all $t_0\in [0,T]$,  $K, N \in \bN$, $R>0$, $\cA \in \bW$ containing a free semi-circular process $(S_t)_{t\in [t_0,T]}$ compatible with a free filtration
		$(\cA_t)_{t \in [t_0, T]}$ and $x_0\in L^\infty(\cA)_{\textup{sa}}^d$,
		\begin{align} \label{311}
			{V}^{K,N,R}_{\cA}(t_0,x_0)\leq \widetilde{V}_{\cA}(t_0,x_0)+ C \Big( \frac{1}{\sqrt{K}} + \frac{K}{N} + \frac{1}{R} \Big).
		\end{align}
		Also for {any $\cA \in \bW$ containing  a discretized semi-circular process,} 
		denoting by $\mathcal{C}$ a tracial von Neumann algebra generated by an infinite free semi-circular family (see Lemma \ref{lem: extend filtration}),
		\begin{align} \label{312}
			V^{K,N,R}_{\cA}(t_0,x_0) \geq \widetilde{V}_{\cA * \mathcal{C}}(t_0,x_0) - C \Big( \frac{1}{\sqrt{K}} + \frac{K}{N} + \frac{1}{R} \Big).
		\end{align}
		Here, $C$ is an increasing function of $\|x_0\|_{L^2(\cA)},$ $T$ and $\beta_F+\beta_C$.
		
		
		Similarly, for the finite-dimensional matrix problem, for all $x_0^n\in M_n(\bC)_{\textup{sa}}^d$,
		\begin{align} \label{313}
			\bigg| \widehat{V}^{K,N,R}_{M_n(\mathbb{C})}(t_0,x_0^n) - \widehat V_{M_n(\mathbb{C})}(t_0, x_0^n)\bigg|  \leq  C \Big( \frac{1}{\sqrt{K}} + \frac{K}{N} + \frac{1}{R} \Big).
		\end{align}
		Here, $ C $ is  increasing in $\|x_0^n\|_{L^2(M_n(\mathbb{C}))},$ $T$ and $\beta_F+\beta_C$, \emph{not} depending on the matrix size $n$.
	\end{proposition}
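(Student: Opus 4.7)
The plan is to prove the three inequalities by constructing explicit near-optimal approximations in each direction. For the upper bound \eqref{311}, I would start with an $\varepsilon$-optimal admissible control $\widetilde\alpha=((\alpha_t)_t,(\cA_t)_t,(S_t)_t)$ for $\widetilde{V}_{\cA}(t_0,x_0)$, whose control satisfies the a priori $L^2$-bound \eqref{l2 bound}, and progressively replace it by an element of $\mathbb{A}^{K,N,R}_{\cA,t_0,x_0}$ through three modifications, each introducing one of the three error terms. First, apply the pointwise truncation $\phi_R$ to enforce $\|\alpha_{t}\|_\infty\le R$; by Lemma~\ref{lem:operator_norm_truncation} together with the a priori $L^2$-bound \eqref{l2 bound}, this contributes at most $O(1/R)$ to the cost. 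Second, replace the time-continuous control on each interval $(t_{i-1},t_i]$ by its time-average (or the conditional expectation onto $\cA_{t_{i-1}}$, followed by averaging), a standard mean-field step which uses joint convexity of $L_{\cA}$ in $(X,\alpha)$ from Assumption \textbf{B} and the Lipschitz bound \eqref{eqn:Lipschitz_bounds}; the trajectory error is $O(\sqrt\delta)=O(1/\sqrt K)$ because $W^0$ and $S$ have $L^2$ oscillation $O(\sqrt\delta)$ over intervals of length $\delta$.

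Third, discretize in the common noise: replace each value $\alpha_i$ by its conditional expectation on the coarser $\sigma$-algebra $\cF_i^N$, which is constant on each cell $O_{i,J}$ and hence defines $\alpha_{i,J}$; $E$-convexity of $L_{\cA}$ (Assumption \textbf{B}) together with the tower property and Jensen's inequality ensures this operation does not increase the cost. The corresponding trajectory error comes from replacing the Brownian increments $W^0_{t_i}-W^0_{t_{i-1}}$ by their conditional means $\omega_{i,J}$: by \eqref{6000} combined with Gaussian tail estimates on the outer cells $o_{i,\pm(N+1)}$, each step contributes $O(1/N)$ in expected $L^2$-trajectory error, so summing over $K$ steps and using the Lipschitz bound \eqref{eqn:Lipschitz_bounds} in $X$ gives a total cost error of $O(K/N)$. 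Combining the three contributions yields \eqref{311}.

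For the lower bound \eqref{312}, I would take an $\varepsilon$-optimal discrete policy in $\mathbb{A}^{K,N,R}_{\cA,t_0,x_0}$ and extend it to a continuous admissible control on $\cA*\cC$ using Lemma~\ref{lem: extend filtration}: the free semi-circular family in $\cC$ furnishes a continuous free Brownian motion $(Z_t)_{t\in[t_0,T]}$ interpolating the discrete process $(S_{t_i})_i$, and on each subinterval $(t_{i-1},t_i]$ I use the piecewise-constant control equal to $\alpha_{i,J}$ on $O_{i,J}$. I would then compare the continuous trajectory with the discrete one: the time-integral of the piecewise constant control agrees with the discrete sum, so the only trajectory errors come from (i) replacing $Z_{t_i}-Z_{t_0}$ with the true continuous $(Z_t-Z_{t_0})$ inside $L_\cA$, giving $O(1/\sqrt K)$ from the $O(\sqrt\delta)$ oscillation of Brownian increments, and (ii) replacing $W^0_{i,J}$ by the true $W^0_t-W^0_{t_0}$, giving $O(K/N)$ as above. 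The truncation error $O(1/R)$ is not needed here since the discrete control already satisfies $\|\alpha_{i,J}\|_\infty\le R$, but its $L^2$-bound, via Remark \ref{exo:july24.2024.1}(2) and the upper bound \eqref{eq:july23.2024.9}, controls the operator-norm truncation defect in the opposite direction via Assumption \textbf{C}; this yields \eqref{312}.

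For the matrix version \eqref{313}, both directions of the argument go through verbatim with $\cA=M_n(\bC)$ and GUE($n$) Brownian motion replacing $(S_t)$, because the noise discretization only touches the common classical noise (which is the same across $n$) and the time-averaging/conditional expectation arguments are measurability arguments that use only the abstract Lipschitz, convexity, and $E$-convexity hypotheses on $L_\cA$ and $g_\cA$, all of which are uniform in $n$ by Assumption \textbf{A}. Consequently the constant $C$ depends only on $\|x_0^n\|_{L^2(M_n(\bC))}$, $T$, and $\beta_C+\beta_F$. The main technical obstacle I anticipate is the bookkeeping in the common-noise discretization: one must simultaneously (a) control the trajectory error over tail cells $o_{i,\pm(N+1)}$ (where the oscillation is not $O(1/N)$ but only $O(\sqrt\delta)$) by exploiting their Gaussian smallness, (b) preserve adaptedness of the control so that $\alpha_{i,J}\in L^\infty(\cA_{t_i})$, and (c) handle the two different Lipschitz regularities—$L^2$ in $X$ from Assumption \textbf{A}(d) but $L^1$ in both variables from Assumption \textbf{C}—so that the quadratic penalty $c\|\alpha\|^2_{L^2}$ does not blow up under truncation; this is precisely why Lemma~\ref{lem:operator_norm_truncation} is formulated the way it is.
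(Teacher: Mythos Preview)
Your outline is largely on target and follows the paper's strategy, but there is one genuine gap and one place where the paper is slicker than your plan.

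\textbf{The gap (lower bound, classical adaptedness).} You write that on each subinterval $(t_{i-1},t_i]$ you use the piecewise-constant control equal to $\alpha_{i,J}$ on the event $O_{i,J}$. This control is \emph{not} $(\cF_t)_t$-adapted: by Definition~\ref{defn:july09.2024.1}(c) the discrete control $\alpha_{i,J}$ depends on $J\in[N]^i$, and the event $O_{i,J}$ involves the increment $W^0_{t_i}-W^0_{t_{i-1}}$, which is not $\cF_t$-measurable for $t<t_i$. The paper fixes this by a one-step time shift: set $\alpha_t\equiv 0$ on $[t_0,t_1]$ and, for $t\in(t_i,t_{i+1}]$, set $\alpha_t=\alpha_{i,J}$ on $O_{i,J}$. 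This shift is exactly the ``adaptedness'' obstacle you anticipate in (b), but it is about the \emph{classical} filtration, not the free one, and it forces you to compare $X_t$ for $t\in(t_i,t_{i+1}]$ with the discrete $X_{i,J}$ (rather than $X_{i+1,J}$), producing the extra terms $(\delta-(t-t_i))\|\alpha_{i,J}\|$ and $|W^0_t-W^0_{t_i}|$ in \eqref{611} that require the conditional Brownian-bridge estimate of Lemma~\ref{cond brownian} (and Lemma~\ref{matrix} in the matrix case). Without the shift your construction is inadmissible; with it you must do this extra bookkeeping, and in particular the $K/N$ term enters genuinely on this side because the trajectory error $|W^0_t-W^0_{t_0}-W^0_{i,J}|$ accumulates $K$ increments each of size $O(1/N)$ on the bulk cells (see \eqref{zeta}--\eqref{622}).

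\textbf{A cleaner upper bound.} For \eqref{311} you propose three successive modifications (truncate, time-average, then condition on $\cF_i^N$), with the third contributing $O(K/N)$ from replacing the Brownian increments by $\omega_{i,J}$. The paper instead defines in one shot
\[
\alpha_{i,J}:=\frac{1}{\delta}\int_{t_{i-1}}^{t_i}\mathbb{E}\big[\alpha_t\,\big|\,O_{i,J}\big]\,dt,
\]
and observes that the discrete trajectory satisfies $X_{i,J}=\mathbb{E}[X_{t_i}\mid O_{i,J}]$ exactly (equation \eqref{612}). Joint convexity of $L_\cA$ in $(X,\alpha)$ then lets Jensen absorb \emph{both} the time-averaging and the common-noise conditioning simultaneously, so no $K/N$ trajectory error appears on this side at all---the only residual term is $\mathbb{E}\big[\sum_i\int_{t_{i-1}}^{t_i}|L_\cA(X_{t_i},\alpha_t)-L_\cA(X_t,\alpha_t)|\,dt\big]=O(1/\sqrt{K})$. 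Your separated approach would work but is less efficient. For the matrix upper bound the same Jensen trick is applied after conditioning jointly on $O_{i,J}$ \emph{and} the GUE filtration $\widehat{\cF}^n_i$; the uniformity in $n$ then rests on $\widehat{\mathbb{E}}[\|\widehat W^n_t-\widehat W^n_s\|_2]\le C\sqrt{t-s}$ with $C$ independent of $n$, which you should make explicit.
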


	This proposition and Lemma \ref{lem:decreasing}, along with the fact that $(\overline V_\cA)_{\cA \in \bW}$ is a tracial function, imply that for any $\cA \in \bW$ containing a free-semi-circular process {freely independent of $x_0$,}
	\begin{align}\label{629}
		\bigg| \overline{V}_{\mathcal A}(t_0, x_0) - 
		\inf_{\iota: \mathcal A \to \mathcal B}
		V^{K,N,R}_{\cB}(t_0,\iota x_0)\bigg| \leq C \Big( \frac{1}{\sqrt{K}} + \frac{K}{N} + \frac{1}{R} \Big).
	\end{align}

	\begin{proof}
		\textbf{Von Neumann algebra case: Proof of \eqref{311}.} 
		For $\epsilon>0$, let $\widetilde{\alpha}\in \mathbb{A}_{\cA,x_0}^{t_0,T}$ denote an $\epsilon$-optimal control for the $\widetilde{V}_{\cA}(t_0,x_0)$. By Lemma \ref{lem:operator_norm_truncation} along with a bound on the $L^2$-norm of ${\alpha}$ (see \eqref{l2 bound}),   we may take the control $\alpha$ to have operator norm bounded by $R$ with a penalty of $\frac{C}{R}$ in the cost. In other words, $\sup_t \norm{\alpha_t}_{L^2(\cA)} \le R$ and
		\begin{align}\label{511}
			\bE\Big[\int_{t_0}^T L_{\cA} (X_t[\widetilde{\alpha}], \alpha_t)dt + g_{\cA}(X_T[\widetilde{\alpha}])\Big]  \le \widetilde{V}_{\cA}(t_0,x_0) + \frac{C}{R} + \epsilon. 
		\end{align}
		We then define a discretized control 
		$(\alpha_{i,J})_{(i,J)}$ (see Definition \ref{defn:july09.2024.1})  by   
		\begin{align} \label{614}
			\alpha_{i,J} := \frac{1}{\delta}\int_{t_{i-1}}^{t_{{i}}} \mathbb{E}\big[ \alpha_t\mid O_{i,J}\big] dt .
		\end{align}
		Note that  
		this control is admissible for the discrete problem. 
		Also, for $i' \le i$,  
		$$
		{{\int_{t_{i'-1}}^{t_{{i'}}} \mathbb{E}\big[ \alpha_t\mid     O_{i,J}\big]dt}} = \int_{t_{i'-1}}^{t_{{i'}}} \mathbb{E}\big[ \alpha_t\mid    O_{i',J}\big]dt = \delta\, \alpha_{i',J}.
		$$
		This is because for $i'<i$, $(\alpha_t)_{t\in [t_{i'-1},t_{i'}]}$ is independent of $W^0_{t_{i}}-W^0_{t_{i-1}}$. Hence, setting $(X_{i,J})_{i,J}$ to be the corresponding solution of the control $(\alpha_{i,J})_{(i,J)}$ (see \eqref{eqn:discrete_dynamics}),
		\begin{align} \label{612}
			\mathbb{E}\big[ X_{t_i}\mid O_{i,J}]& =  x_0 + \sum_{i'=1}^i \mathbb{E}\Big[ \int_{t_{i'-1}}^{t_{i
					'}}\alpha_{t}\mid O_{i,J}\Big]dt + \beta_C\, \mathbbm{1}_{\cA} \sum_{i'=1}^i 
			\mathbb{E}[W_{t_{i'}}^0 - W_{t_{i'-1}}^0  \mid O_{i,J}] + \beta_F(S_{t_i}-S_{t_0}) \nonumber  \\
			&=  x_0 + \delta \sum_{i'=1}^i \alpha_{i',J}  + \beta_C\, \mathbbm{1}_{\cA} W_{i,J}^0 + \beta_F\, (S_{t_i} -{{S_{t_0})}} = X_{i,J}.
		\end{align}
		Then by Jensen's inequality for the conditional expectation (recall that $L_\cA$ is jointly convex, see Assumption \textbf{B}), 
		\begin{align*}
			V^{K,N,R}_{\cA}&(t_0,x_0)\leq \sum_{i=1}^K \sum_{J\in [N]^i}\bP(O_{i,J})L_{\cA}(X_{i,J}, \alpha_{i,J})\delta + \sum_{J\in [N]^K}\bP(O_{K,J}) g_{\cA}(X_{K,J})\\
			&\overset{\eqref{612}}{=}  \ \bE \Big[ \sum_{i=1}^K \sum_{J\in [N]^i}
			L_{\cA}( \mathbb{E}\big[ X_{t_i}| O_{i,J}], \alpha_{i,J})  \delta  \cdot \mathbbm{1}_{O_{i,J}}  \Big] + \bE \Big[ \sum_{J\in [N]^K} g_{\cA}(  \mathbb{E}\big[ X_{T}\mid O_{K,J}] ) \mathbbm{1}_{O_{i,J}}  \Big] \\
			&\overset{\eqref{614}}{\le}    \ \bE \Big[ \sum_{i=1}^K\sum_{J\in [N]^i} \int_{t_{i-1}}^{t_i} 
			\mathbb{E} [ L_{\cA}( X_{t_i} , \alpha_{t})  | O_{i,J} ]  dt \cdot \mathbbm{1}_{O_{i,J}}  \Big] + \bE \Big[ \sum_{J\in [N]^K} \mathbb{E} [g_{\cA} ( X_{T} ) | O_{K,J}]   \mathbbm{1}_{O_{i,J}}  \Big] \\
			&=  \ \mathbb{E}\bigg[\sum_{i=1}^K \int_{t_{i-1}}^{t_i} L_{\cA}(X_{t_i}, \alpha_t)dt + g_{\cA}(X_{T})\bigg]\\
			&\overset{\eqref{511}}{\le} \widetilde{V}_{\cA}(t_0,x_0)+\frac{C}{R} + \epsilon +\mathbb{E}\Bigg[\sum_{i=1}^K \int_{t_{i-1}}^{t_i}\big| L_{\cA}(X_{t_i}, \alpha_t) -  L_{\cA}(X_{t}, \alpha_t) \big|dt\Bigg].
		\end{align*}
		We control the last term. Note that for $t\in [t_{i-1},t_i],$
		\begin{align} \label{321}
			\|X_{t_i} -  X_{t} \|_{L^2(\cA)} \le C \Big( \int_{t}^{t_i} \|\alpha_s  \|_{L^2(\cA)} ds + \beta_C |W_{t_i}^0  -W_{t}^0| + \beta_F \sqrt{t_i-t}\Big).
		\end{align}
		Taking the expectation, noting that $0\le t_i - t \le \delta,$
		\begin{align}
			\bE [\|X_{t_i} -  X_{t} \|_{L^2(\cA)}] \le C \Big( \int_{t}^{t_i} \bE \|\alpha_s  \|_{L^2(\cA)} ds + \sqrt{\delta}\Big).
		\end{align}
		Thus, using the Lipschitz condition on ${L}_\cA$ (see Assumption \textbf{A}),
		\begin{align} \label{632}
			\mathbb{E}\Bigg[\sum_{i=1}^K \int_{t_{i-1}}^{t_i}\big| L_{\cA}(X_{t_i}, \alpha_t) -  L_{\cA}(X_{t}, \alpha_t) \big|dt\Bigg] & \le C \mathbb{E}\Bigg[\sum_{i=1}^K \int_{t_{i-1}}^{t_i} \|X_{t_i} -  X_{t} \|_{L^2(\cA)} dt\Bigg] 
			\nonumber  \\
			&\le  C  \sum_{i=1}^K \int_{t_{i-1}}^{t_i} \Big(\int_{t}^{t_i} \mathbb{E} \|\alpha_s  \|_{L^2(\cA)} ds +\sqrt{\delta} \Big) dt \nonumber  \\
			&\le C  \delta \int_{t_0}^T  \mathbb{E}  \|\alpha_s  \|_{L^2(\cA)} ds + C T \sqrt{\delta} \nonumber  \\
			&\overset{\eqref{l2 bound}}{\le} C  \delta 
			( \| x_0\|_{L^2(\cA)}+T+1)+  C T \sqrt{\delta} \le C \frac{1}{\sqrt{K}} .
		\end{align}
		Therefore, as $\epsilon>0$ is arbitrary, we obtain the upper bound.

		~

		\textbf{Von Neumann algebra case: Proof of \eqref{312}.} For $\epsilon>0$, we similarly choose an $\epsilon$-optimal discrete control $ (\alpha_{i,J})_{i,J}$ in $\mathbb{A}_{\cA, t_0,x_0}^{K,N,R}$. Then consider the enlarged algebra $\cA * \cC$, extending the discretized semi-circular process $(S_{t_{i}})_{i=0}^K$ to the continuous semi-circular process  $(S_t)_{t \ge t_0} $ (see Lemma \ref{lem: extend filtration}).
		
		We extend  $ (\alpha_{i,J})_{i,J}$ to an admissible control for the continuous-time problem, with a shift in time step to satisfy the measurability requirement.  Precisely, we set $\alpha_t := 0$ for $t\in [t_0,t_1]$, and for $t\in (t_i,t_{i+1}]$ with $i \in \{1,\cdots,K-1\}$,  
		$$
		\alpha_t(\omega) := \alpha_{i,J}  \ \hbox{ where $J \in [N]^i$ is a (unique) multi-index such that }\omega\in O_{i+1,J},
		$$
		{{which is a discretized control defined in \eqref{eq:discret-controlWG}}.} We regard $\alpha_t(\omega)$ as an element in  $L^2 (\cA * \cC)_{\textup{sa}}^d$.
		Let $X_t$ be the corresponding solution in $L^2 (\cA * \cC)_{\textup{sa}}^d$. Then  for $t \in [t_0,t_1],$  
		\begin{align} \label{626}
			X_t = x_0 +\beta_C \mathbbm{1}_{\cA * \cC} \big(W^0_{t} -W^0_{t_{0}}\big)  +  \beta_F (S_t - S_{t_0}).
		\end{align}
		Next,
		for any $i\in \{1,\cdots,K-1\}$, $t\in (t_{i},t_{i+1}]$ and a multi-index $J \in [N]^K$,
		\begin{align*}
			X_{t} - X_{i,J} = (t-t_i-\delta) \alpha_{i,J}+ {{ \beta_C \mathbbm{1}_{\cA * \cC} \big(W^0_{t} -W^0_{t_{0}}- W^0_{i,J}\big)}}  +  \beta_F (S_t - S_{t_i}) \qquad \text{on $O_{i+1,J}$}.
		\end{align*}
		We claim that for any   $t\in (t_{i},t_{i+1}]$,  on the event $O_{i+1,J}$,
		\begin{align} \label{611}
			\mathbb{E}& \Big[\|X_{t} - X_{i,J}\|_{L^2(\cA  * \cC)} \mid \mathcal{F}_K^N \Big]  \nonumber \\
			&\leq  (\delta - (t-t_i)) \|\alpha_{i,J} \|_{L^2(\cA  * \cC)}   +   
			C \beta_C \Big (     \zeta_{i,J} + \sqrt{\delta}  +
			\bE [ |W_{t_{i+1}}^0 -W_{t_{i}}^0 |\mid  \mathcal{F}_K^N ]  \Big) + C\sqrt{\delta} 
			\beta_F,
		\end{align}
		where for $J = (j_1,\cdots,j_K),$  
		\begin{align} \label{zeta}
			\zeta_{i,J}:=  \begin{cases}
				\frac{K}{N}&\qquad  j_1,j_2,\cdots,j_i  \in  \{-N,\cdots,N-1\}, \\
				\frac{K}{N}+ \sqrt{TK}   &\qquad  \text{otherwise}.
			\end{cases}
		\end{align}
		To see this, recalling \eqref{disbm}, note that
		\begin{align} \label{620}
			\|X_{t} - X_{i,J}\|_{L^2(\cA  * \cC)} &\le   (\delta - (t-t_i)) \|\alpha_{i,J} \|_{L^2(\cA  * \cC)} + {{ \beta_C    |W^0_{t} -W^0_{t_{0}}- W^0_{i,J}| }}  + C \sqrt{\delta} \beta_F \nonumber \\
			&\le    (\delta - (t-t_i)) \|\alpha_{i,J} \|_{L^2(\cA  * \cC)} \nonumber \\
			&\qquad +  \beta_C \Big( \sum_{i'=1}^{i}   |W_{t_{i'}}^0 -W_{t_{i'-1}}^0-\omega_{i',j_{i'}} |    +|W_t^0-W_{t_{i}}^0| \Big)  + C \sqrt{\delta}   \beta_F.   
		\end{align} 
		By \eqref{6000}, the conditional expectation of each term in the summation above is controlled as
		\begin{align*}
			\bE [|W_{t_{i'}}^0 - W_{t_{i'-1}}^0 - w_{i',j_{i'}}  | \mid \mathcal{F}_K^N] \le     \begin{cases}
				N^{-1}&\qquad j_{i'} \in \{-N,\cdots,N-1\}, \\
				\sqrt{\delta} &\qquad j_{i'}  = -N-1 \text{ or } N.
			\end{cases}
		\end{align*}
		Also by Lemma \ref{cond brownian},
		\begin{align} \label{610}
			\bE [ |W_t^0-W_{t_{i}}^0|   \mid  W_{t_{i+1}}^0 -W_{t_{i}}^0 ] &\le   \frac{t-t_i}{t_{i+1}-t_{i}} |W_{t_{i+1}}^0 -W_{t_{i}}^0|  + \sqrt{t-t_i} \nonumber \\
			&\le  |W_{t_{i+1}}^0 -W_{t_{i}}^0| +\sqrt{\delta} .
		\end{align} 
		Thus by the tower property of conditional expectations,
		\begin{align*}
			\bE [ |W_t^0-W_{t_{i}}^0|  \mid  \mathcal{F}_K^N ] &=   \bE [ |W_t^0 -W_{t_{i}}^0|  \mid  \mathcal{F}_{i+1,*}^N ] \\
			&= \bE [ \bE [ |W_t^0 -W_{t_{i}}^0|  \mid W_{t_{i+1}}^0 -W_{t_{i}}^0] \mid  \mathcal{F}_{i+1,*}^N ]  \\
			&\overset{\eqref{610}}{\le}  
			\bE [ |W_{t_{i+1}}^0 -W_{t_{i}}^0 |\mid  \mathcal{F}_{i+1,*}^N ] +\sqrt{\delta} =  
			\bE [ |W_{t_{i+1}}^0 -W_{t_{i}}^0 |\mid  \mathcal{F}_K^N ] +\sqrt{\delta} .
		\end{align*} 
		Thus, by plugging the above two estimates into \eqref{620}, using the fact $i \le K$ and $\delta = \frac{T}{K}$, we obtain \eqref{611}.

		~

		Now, we compare the Lagrangian part: for any multi-index $J$, on the event $O_{K,J},$
		\begin{align*}
			\mathbb{E}\Bigg[\int_{t_1}^T L_{\cA * \cC}(X_t,\alpha_t) dt &  \  \Big\vert  \   \mathcal{F}_K^N \Bigg]-\delta \sum_{i=1}^{K-1}  L_{\cA * \cC}(X_{i,J}, \alpha_{i,J})\nonumber \\
			& =  
			\sum_{i=1}^{K-1} \bE \Big[ \int_{t_{i}}^{t_{i+1}} \Big(L_{\cA * \cC}(X_t,\alpha_{i,J})-L_{\cA * \cC}(X_{i,J}, \alpha_{i,J})\Big)dt  \  \Big\vert  \   \mathcal{F}_K^N   \Big]     \\
			&\le C\sum_{i=1}^{K-1}   \bE \Big[ \int_{t_{i}}^{t_{i+1}} \|X_t - X_{i,J} \|_{L^2(\cA  * \cC)}   dt  \  \Big\vert  \   \mathcal{F}_K^N   \Big] .
		\end{align*} 
		Taking the expectation w.r.t.  $ \mathcal{F}_K^N   ,$
		\begin{align} \label{615}
			& \mathbb{E}\Big[\int_{t_0}^T L_{\cA * \cC}(X_t,\alpha_t) dt \Big]-\delta  \sum_{i=1}^{K} \sum_{J\in [N]^K}  L_{\cA * \cC}(X_{i,J}, \alpha_{i,J}) \bP (O_{K,J}) \nonumber \\
			&\le \bE \Big[ \int_{t_{0}}^{t_{1}}  L_{\cA * \cC}(X_t,\alpha_t)dt \Big] - \delta \sum_{J\in [N]^K}    L_{\cA * \cC}(X_{K,J}, \alpha_{K,J})  \bP (O_{K,J}) \nonumber \\
			& \qquad +C\sum_{i=1}^{K-1}  \int_{t_{i}}^{t_{i+1}} \bE \Big[\sum_{J\in [N]^K}  \bE \Big[  \|X_t - X_{i,J} \|_{L^2(\cA  * \cC)}     \  \Big\vert  \   \mathcal{F}_K^N   \Big] \mathbbm{1}_{O_{K,J}}\Big] dt.
		\end{align} 
		By \eqref{611}, for every $i\in \{1,\cdots,K-1\}$ and $t\in (t_i,t_{i+1}],$ the last term above is bounded as
		\begin{align} \label{617}
			\bE \Big[&\sum_{J\in [N]^K}  \bE \Big[  \|X_t - X_{i,J} \|_{L^2(\cA  * \cC)}    \  \Big\vert  \   \mathcal{F}_K^N   \Big] \mathbbm{1}_{O_{K,J}}\Big] \nonumber \\
			&\le   
			C \beta_C  (\sqrt{\delta} +  
			\bE  |W_{t_{i+1}}^0 -W_{t_{i}}^0 | ) + C \sqrt{\delta} \beta_F  \nonumber   \\
			&+ (\delta - (t-t_i))      \sum_{J\in [N]^K} \|\alpha_{i,J} \|_{L^2(\cA  * \cC)}   \bP(O_{K,J})
			+ C\beta_C \bE \Big[ \sum_{J\in [N]^K} \zeta_{i,J}\mathbbm{1}_{O_{K,J}} \Big].
		\end{align}
		Note that  there exists $C>0$ such that for every $i$, 
		\begin{align} \label{616}
			\bE |W_{t_{i+1}}^0 -W_{t_{i}}^0 | 
			= C\sqrt{\delta}.
		\end{align}
		To bound the last term above, we define the sets
		\begin{align*}
			\textsf{bulk}:=\{ J\in [N]^K:  \text{$j_{1},\cdots,j_K  \in \{-N,\cdots,N-1\}$}\}
		\end{align*}
		and 
		\begin{align*}
			\textsf{edge}:= \{ J\in [N]^K:  \text{there exists $i \in \{1,\cdots,K\}$ such that $j_{i}  = -N-1$ or $j_{i}  = N$}\}.
		\end{align*}
		Observe that $\textsf{bulk}$ and $\textsf{edge}$ consists of the partition of $[N]^K$. To see that  $ \textsf{edge}$ is a rare set, note that for any $i \in \{1,\cdots,K\}$, by a Gaussian tail estimate,
		\begin{align*}
			\bP( o_{i,-N-1})  = \bP\Big({W^0_{t_i}-W^0_{t_{i-1}} \over \sqrt \delta} \le -\frac{1}{\sqrt{\delta}}\Big) \le Ce^{-1/(2\delta)},
		\end{align*}
		and the same probability bound holds for $O_{i,N}$ as well. Hence, 
		by a union bound, for sufficiently large $K$ (equivalently, for sufficiently small $\delta = T/K$), 
		\begin{align*}
			\bE \Big[ \sum_{J\in  \textsf{edge}} \mathbbm{1}_{O_{K,J}}\Big] &= \bP\Big(\bigcup_{J\in  \textsf{edge}} O_{K,J}  \Big) \\
			&\le  \bP( \text{$o_{i,-N-1}$ or  $o_{i,N}$ 
				occurs for some $i \in \{1,\cdots,K\}$} ) \\
			&\le 2K \cdot Ce^{-1/(2\delta)} \le e^{-K/(4T)}.
		\end{align*}
		Therefore, recalling the definition of $\zeta_{i,J}$ in \eqref{zeta},  the last term in \eqref{617} is controlled as
		\begin{align} \label{622}
			\bE \Big[ \sum_{J\in [N]^K} \zeta_{i,J}\mathbbm{1}_{O_{K,J}}\Big]  & =   \bE \Big[ \sum_{J\in  \textsf{bulk}} \zeta_{i,J}\mathbbm{1}_{O_{K,J}}\Big] +    \bE \Big[ \sum_{J\in  \textsf{edge}} \zeta_{i,J}\mathbbm{1}_{O_{K,J}}\Big]   \nonumber \\
			&\le \frac{K}{N} + \Big(  \frac{K}{N} + \sqrt{TK}\Big)e^{-K/(4T)} \le \frac{2K}{N} + \sqrt{TK} e^{-K/(4T)} .
		\end{align}
		Thus, using this and \eqref{616}, the  last summation term in  \eqref{615} is bounded by
		\begin{align} \label{619}
			C&T
			\sqrt{\delta} (\beta_C + \beta_F)dt + C \sum_{i=1}^{K-1} \sum_{J\in [N]^K} \|\alpha_{i,J} \|_{L^2(\cA  * \cC)}  \bP(O_{K,J})  \Big[\int_{t_{i}}^{t_{i+1}}   (\delta - (t-t_i))     dt \Big]\nonumber  \\
			&\qquad \qquad + C\beta_C T\Big( \frac{K}{N} + \sqrt{TK}e^{-K/(4T)} \Big)  \nonumber \\
			&\le CT\sqrt{\delta} (\beta_C + \beta_F) + C \delta^2 \sum_{i=1}^{K-1} \sum_{J\in [N]^K} \|\alpha_{i,J} \|_{L^2(\cA  * \cC)}  \bP(O_{K,J})  + C\beta_C \Big(\frac{TK}{N} +  e^{-K/(8T)}\Big).
		\end{align}
		Let us control the second term above. Since $(\alpha_{i,J})_{i,J}$ is an $\epsilon$-optimal control,
		by \eqref{eq:july23.2024.9} and \eqref{eq:july26.2024.2c},  
		\begin{align} \label{624}
			\sum_{i=1}^{K-1} \sum_{J\in [N]^K} \|\alpha_{i,J} \|^2_{L^2(\cA  * \cC)}  \bP(O_{K,J})  = \sum_{i=1}^{K}\sum_{J\in [N]^i} \| \alpha_{i,J}\|^2_{L^2(\cA  * \cC)} \bP(O_{i, J}) \le C \frac{1+\epsilon}{\delta},
		\end{align}
		where $C>0$ is a constant depending on $T,\beta_C, \beta_F$ and $\norm{x_0}_{L^2(\cA)}.$
		Now we use the following basic  inequality: For any $\beta_1,\cdots,\beta_m \ge 0$ and $p_1,\cdots,p_m \ge 0$ such that $\sum_{i=1}^m p_i=1$,
		\begin{align}
			\sum_{i=1}^m \beta_i p_i \le \sqrt{\sum_{i=1}^m \beta_i^2 p_i}.
		\end{align}
		This follows from the concavity of the square-root function.
		Applying this to \eqref{624}, we have
		\begin{align} \label{623}
			\sum_{i=1}^{K}\sum_{J\in [N]^K} \| \alpha_{i,J}\|_{L^2(\cA  * \cC)}\bP(O_{K, J}) \le C\sqrt{\frac{1+\epsilon}{\delta}} \le C \delta^{-1/2}.
		\end{align}
		In addition, recalling that $\alpha_t=0$ on $[t_0,t_1]$ and using the condition $-C_1 \le L_{\cA * \cC}(X,\alpha) \le C_1(1+\|X\|_{L^2(\cA  * \cC)} + \|\alpha \|_{L^2(\cA  * \cC)}^2)   $,
		\begin{align*}
			\bE \Big[ \int_{t_{0}}^{t_{1}}  L_{\cA * \cC}(X_t,\alpha_t)dt \Big] &- \delta \sum_{J\in [N]^K}    L_{\cA * \cC}(X_{K,J}, \alpha_{K,J})  \bP (O_{K,J})  \\
			&\le   C  \Big[ \int_{t_{0}}^{t_{1}}   \bE \|X_t\|_{L^2(\cA  * \cC)}    dt \Big] + C\delta  \overset{\eqref{626}}{\le}  C \delta (1+ \norm{x_0}_{L^2(\cA)}) + C{\delta}^{3/2} \le C'\delta,
		\end{align*}
		where $C'>0$ is a constant depending on $\norm{x_0}_{L^2(\cA)}$ and $\delta =\frac{T}{K} >0$ is sufficiently small.
		Plugging this along with \eqref{623} into \eqref{615} and \eqref{619}, we deduce that 
		\begin{align} \label{621}
			\mathbb{E}\Big[\int_{t_0}^T L_{\cA * \cC}(X_t,\alpha_t)  \Big]&-\delta \sum_{i=1}^{K} \sum_{J\in [N]^K}  L_{\cA * \cC}(X_{i,J}, \alpha_{i,J}) \bP (O_{K,J}) \nonumber \\
			&\le C'{\frac{T}{K}} +CT\sqrt{\frac{T}{K}} (\beta_C + \beta_F) + C  \Big(\frac{T}{K}\Big)^{3/2}    + C\beta_C\frac{TK}{N} + C\beta_Ce^{-K/(8T)}.
		\end{align}
		Finally we compare the terminal cost.  By \eqref{611}, for any multi-index $J \in [N]^K$,
		\begin{align*}  
			\mathbb{E} \Big[\|X_{T} - & X_{K,J}\|_{L^2(\cA  * \cC)} \mid \mathcal{F}_K^N \Big] \\
			&\le \mathbb{E}\Big[\|X_{T} - X_{K-1,J}\|_{L^2(\cA  * \cC)} \mid \mathcal{F}_K^N \Big]+\mathbb{E}\Big[\|X_{K-1,J} - X_{K,J}\|_{L^2(\cA  * \cC)} \mid \mathcal{F}_K^N \Big]  \\
			&\leq  
			C \beta_C \Big (     \zeta_{J} + \sqrt{\delta}  +
			\bE [ |W_{t_K}^0 -W_{t_{K-1}}^0 |\mid  \mathcal{F}_K^N ]  \Big) + C\sqrt{\delta} 
			\beta_F + \delta \| \alpha_{K,J}\|_{L^2(\cA  * \cC)}.
		\end{align*}
		Since
		\begin{align*}
			\mathbb{E}\big[g_{\cA * \cC}(X_T) \mid \mathcal{F}_K^N ]-g_{\cA * \cC}(X_{K,J})\leq  
			C\, \mathbb{E}\big[\|X_T-X_{K,J}\|_{L^2(\cA  * \cC)} \mid \mathcal{F}_K^N  ],
		\end{align*}
		taking the average w.r.t. $ \mathcal{F}_K^N ,$ by the similar reasoning as before,
		\begin{align*} 
			\mathbb{E}\big[g_{\cA * \cC}(X_T)]&- \sum_{J\in [N]^K} \bP(O_{K, J}) g_{\cA * \cC}(X_{K,J}) \\
			&\le C\sum_{J\in [N]^K}  \bE \Big[  \mathbb{E}\big[\|X_T-X_{K,J}\|_{L^2(\cA  * \cC)} \mid \mathcal{F}_K^N  ] \mathbbm{1}_{O_{K,J}} \Big] \\
			&\le  C\sqrt{\delta}(\beta_C + \beta_F) +C \beta_C \bE \Big[\sum_{J\in [N]^K}     \zeta_J  \mathbbm{1}_{O_{K,J}} \Big]  + \delta \sum_{J\in [N]^K} \| \alpha_{K,J}\|_{L^2(\cA  * \cC)}  \bP (O_{K,J})  \\
			&\overset{\eqref{622},\eqref{623}}{\le}   C\sqrt{\delta}(\beta_C + \beta_F) +C \beta_C \Big( \frac{2K}{N} + \sqrt{TK} e^{-K/(4T)} \Big) + C\sqrt{\delta}.
		\end{align*}
		Combining this with  \eqref{621},  we establish that for sufficiently large $K,$
		\begin{align*}
			\widetilde{V}_{\cA * \cC}(t_0,x_0) \le V^{K,N,R}_{\cA }(t_0,x_0) + \epsilon+C(T+1)\sqrt{\frac{T}{K}}(\beta_C + \beta_F)  + C\beta_C\frac{TK}{N} .
		\end{align*}
		Since $\epsilon>0$ is arbitrary, we conclude the proof.
		
		~
		
		\textbf{Matrix case: Upper bound for a discretized value function.}
		Recall that $\widehat{\mathcal{F}}^n_{i}$ denotes the $\sigma$--algebra  generated by  
		$\{\widehat{W}^n_{t_1}-\widehat{W}^n_{t_0}, \ldots, \widehat{W}^n_{t_i}-\widehat{W}^n_{t_{i-1}}\}$. For $\epsilon>0$, choose an $\epsilon$-optimal control $ (\alpha_t^n)_{t}$ in $\widehat{\bA}_{M_n(\bC)}^{t_0,T}$, and define
		\begin{align} \label{6140}
			\alpha_{i,J}^n :=
			\frac{1}{\delta}\int_{t_{i-1}}^{t_{{i}}} \overline {\mathbb{E}}\big[ \alpha^n_t \mid  O_{i,J}, \widehat{\mathcal{F}}^n_{i}\big] dt   
		\end{align}
		(recall that $(\overline \Omega, \overline \bP)$ denotes the product probability space supporting both common and GUE noise, and $\overline {\mathbb{E}}$ denotes the correspoding expectation).
		{Note that  this control belongs to $\widehat{\mathbb{A}}_{M_n(\mathbb{C}),t_0}^{K,N,R}$, a class of discretized controls for the matrix problem (see Definition \ref{defn:july09.2024.2}).}
		Then, for $i' \le i$,  
		$$
		{{\int_{t_{i'-1}}^{t_{{i'}}} \overline{ \mathbb{E}}\big[ \alpha^n_t\mid     O_{i,J}, 
				\widehat{\mathcal{F}}^n_{i}\big]dt}} = \int_{t_{i'-1}}^{t_{{i'}}} \overline {\mathbb{E}}\big[ \alpha^n_t\mid    O_{i',J}, \widehat{\mathcal{F}}^n_{i'} \big]dt = \delta\, \alpha^n_{i',J},
		$$
		where the first identity follows from the fact that $(\alpha^n_t)_{t\in [t_{i'-1},t_{i'} ]}$ is independent of $W^0_{t_j} - W^0_{t_{j-1}} $ and $\widehat W^n_{t_j} - \widehat W^n_{t_{j-1}}$ for $j \ge i'+1$. Thus
		similarly as in \eqref{612}, 
		\begin{align*}   
			&\overline{\mathbb{E}}\big[ X^n_{t_i}  \mid  O_{i,J}, \widehat{\mathcal{F}}^n_{i} ] \nonumber \\
			& =  x_0^n + \sum_{i'=1}^i \overline{\mathbb{E}} \Big[ \int_{t_{i'-1}}^{t_{i'}}\alpha_{t}^n \mid  O_{i,J},\widehat{\mathcal{F}}^n_{i} \Big]dt + \beta_C\, \mathbbm{1}_{M_n (\mathbb{C})} \sum_{i'=1}^i 
			\overline{\mathbb{E}} [W_{t_{i'}}^0 - W_{t_{i'-1}}^0  \mid O_{i,J},\widehat{\mathcal{F}}^n_{i}] + \beta_F(\widehat W_{t_i}^n-\widehat W_{t_0}^n ) \nonumber  \\
			&=  x_0^n + \delta \sum_{i'=1}^i \alpha_{i',J}^n  + \beta_C\, \mathbbm{1}_{M_n (\mathbb{C})} W_{i,J}^0 + \beta_F(\widehat W_{t_i}^n-\widehat W_{t_0}^n )  = X_{i,J}^n ,
		\end{align*}
		where the second identity follows from the independence of $ (W^0_t)_{t}$ and  $ (\widehat W^n_t)_{t}$. 
		Thus by Jensen's inequality as before,  
		\begin{align*}
			V^{K,N,R}_{M_n (\mathbb{C})}&(t_0,x_0^n)  \leq \widehat{\bE} \Big[\sum_{i=1}^K \sum_{J\in [N]^i}\bP(O_{i,J})L_{M_n (\mathbb{C})}(X_{i,J}^n, \alpha_{i,J}^n)\delta + \sum_{J\in [N]^K}\bP(O_{K,J}) g_{M_n (\mathbb{C})}(X_{K,J}^n)\Big]\\
			&=  \    \widehat{\bE}  \Big[ \sum_{i=1}^K \sum_{J\in [N]^i}
			L_{M_n (\mathbb{C})}( \overline{\bE} \big[ X^n_{t_i} \mid O_{i,J} , \widehat{\mathcal{F}}^n_{i} ], \alpha^n_{i,J})  \delta  \cdot \bP(O_{i,J})  \Big]  \\
			& \qquad \qquad \qquad \qquad  \qquad \qquad  
			+    \widehat{\bE}  \Big[ \sum_{J\in [N]^K} g_{M_n (\mathbb{C})}(  \overline{\bE} \big[ X_{T}^n \mid O_{K,J}, \widehat{\mathcal{F}}^n_{i}] ) 
			\bP(O_{i,J})    \Big] \\
			&\overset{\eqref{6140}}{\le}   \widehat{\bE} \Big[ 
			\sum_{i=1}^K\sum_{J\in [N]^i} \int_{t_{i-1}}^{t_i} 
			\overline{\bE}  [ L_{M_n (\mathbb{C})}( X^n_{t_i} , \alpha^n_{t})  \mid O_{i,J} ,\widehat{\mathcal{F}}^n_{i} ]   \cdot \bP(O_{i,J})   \Big]    \\
			& \qquad \qquad \qquad \qquad \qquad \qquad  +  \widehat{\bE} \Big[ \sum_{J\in [N]^K} \overline{\bE} \big[ g_{M_n (\mathbb{C})}(   X_{T}^n ) \mid  O_{K,J},\widehat{\mathcal{F}}^n_{i} ]  
			\bP(O_{i,J})   \Big]    \\
			&=  \overline{\bE} \bigg[\sum_{i=1}^K \int_{t_{i-1}}^{t_i} L_{M_n (\mathbb{C})}(X^n_{t_i}, \alpha^n_t)dt + g_{M_n (\mathbb{C})}(X^n_{T})\bigg]\\
			&\le  \widehat{V}_{M_n (\mathbb{C})}(t_0,x_0^n)+\frac{C}{R} + \epsilon +\overline{\bE} \Bigg[\sum_{i=1}^K \int_{t_{i-1}}^{t_i}\big| L_{M_n (\mathbb{C})}(X^n_{t_i}, \alpha^n_t) -  L_{M_n (\mathbb{C})}(X^n_{t}, \alpha^n_t) \big|dt\Bigg].
		\end{align*}
		The last term can be controlled as in the von Neumann case \eqref{321}-\eqref{632}. Only one difference is a matrix Brownian motion part:
		For $t\in [t_{i-1},t_i],$
		\begin{align*}
			\|X_{t_i}^n -  X^n_{t} \|_{ L^2 (M_n( \bC))} \le C \Big( \int_{t}^{t_i} \|\alpha^n_s  \|_{ L^2 (M_n( \bC))} ds + \beta_C |W_{t_i}^0  -W_{t}^0| + \beta_F \|\widehat W^n_{t_i}- \widehat W^n_{t}\|_{ L^2 (M_n( \bC))} \Big).
		\end{align*}
		{Using the fact that 
			\begin{align} \label{631}
				\widehat  \bE [  \|\widehat W^n_t- \widehat W^n_{s}\|_{ L^2 (M_n( \bC))}] \le C\sqrt{t-s},\qquad \forall t \ge s \ge 0,
			\end{align}
			where $C>0$ is a constant \emph{not} depending on the matrix size $n$,}
		taking the expectation, 
		\begin{align*}
			\overline{\bE} [\|X^n_{t_i} -  X^n_t \|_{ L^2 (M_n( \bC))}] \le C \Big( \int_{t}^{t_i} \overline \bE \|\alpha^n_s  \|_{ L^2 (M_n( \bC))} ds + \sqrt{\delta}\Big).
		\end{align*}
		{Note that using \eqref{631}, one can deduce a matrix-version  of \eqref{l2 bound} as follows: For an $\epsilon$-optimal control $ (\alpha_t^n)_{t}$,
			\begin{align*}
				\overline  \bE\Big[\int_{t_0}^T \|\alpha^n_t\|_{L^2(\cA)}^2dt\Big] \leq C (\| x_0^n\|_{L^2(M_n( \bC))} + T+1) .
			\end{align*}
			Hence by the argument as in \eqref{321}-\eqref{632}, we are done. }
		
		~
		
		\textbf{Matrix case: Lower bound for a discretized value function.} For $\epsilon>0$, let $ (\alpha_{i,J}^n)_{i,J}$ be an $\epsilon$-optimal discrete control.  Similarly as before,
		we set $\alpha^n_t := 0$ for $t\in [t_0,t_1]$, and for $t\in (t_i,t_{i+1}]$ with $i \in \{1,\cdots,K-1\}$,  
		$$
		\alpha^n_t( (\omega ,  \widehat{\omega})) := \alpha^n_{i,J}  (\widehat{\omega})  \ \hbox{ where $J$ is a multi-index such that }\omega\in O_{i+1,J}.
		$$
		We show that the estimate \eqref{611} holds for the matrix case as well: For any  $i\in \{1,\cdots, K-1\}$ and   $t\in (t_{i},t_{i+1}]$,  on the event $O_{i+1,J}$,  
		\begin{align} \label{322}
			\overline{\mathbb{E}}\Big[\|X^n_{t} - X^n_{i,J}\|&_{ L^2 (M_n( \bC))} \mid \mathcal{F}_K^N, \widehat{\mathcal{F}}^n_{K} \Big]  \nonumber \\
			&\leq  (\delta - (t-t_i)) \|\alpha^n_{i,J} \|_{ L^2 (M_n( \bC))}   +   
			C \beta_C \Big (     \zeta_{i,J} + \sqrt{\delta}  +
			\bE [ |W_{t_{i+1}}^0 -W_{t_{i}}^0 |\mid  \mathcal{F}_K^N ]  \Big) \nonumber \\
			&\qquad + C 
			\beta_F \big(\| \widehat W^n_{t_{i+1}} - \widehat W^n_{t_i}  \|_{ L^2 (M_n( \bC))}
			+ C\sqrt{\delta}\big ) ,
		\end{align}
		where $\zeta_{i,J}$ is defined in \eqref{zeta}. To see this, observe that we have a bound analogous to  \eqref{620}:
		\begin{align*}
			\|X^n_{t} - X^n_{i,J}&\|_{ L^2 (M_n( \bC))}  \le   (\delta - (t-t_i)) \|\alpha^n_{i,J} \|_{ L^2 (M_n( \bC))}  \\
			& +  \beta_C \Big( \sum_{i'=1}^{i}   |W_{t_{i'}}^0 -W_{t_{i'-1}}^0-\omega_{i',j_{i'}} |    +|W_t^0-W_{t_{i}}^0| \Big)  + C  \beta_F \|\widehat W^n_t- \widehat W^n_{t_i}\|_{ L^2 (M_n( \bC))}  . 
		\end{align*}
		We take the conditional expectation w.r.t.  $\mathcal{F}_K^N$ together with $ \widehat{\mathcal{F}}^n_{K}$, and then use the following analog of \eqref{610} for the GUE case:
		\[
		\mathbb{E}\bigl[\|\widehat W^n_t - \widehat W^n_{t_i} \|_{ L^2 (M_n( \bC))} \bigm| \widehat W^n_{t_{i+1}} - \widehat W^n_{t_i} \bigr]
		\le\| \widehat W^n_{t_{i+1}} - \widehat W^n_{t_i}  \|_{ L^2 (M_n( \bC))}
		+ C\sqrt{\delta},
		\]
		which follows from Lemma \ref{matrix} in Appendix. Thus
		we obtain the desired estimate \eqref{322}.

		Hence, one can proceed with the same argument as before, with only difference that we take the conditioning w.r.t.  $\mathcal{F}_K^N$ together with $ \widehat{\mathcal{F}}^n_{K}$. Indeed, using the fact
		\begin{align*}
			\widehat \bE   \| \widehat W^n_{t_{i+1}} - \widehat W^n_{t_i}  \|_{ L^2 (M_n( \bC))} \le C \sqrt{
				\delta
			}
		\end{align*}
		($C>0$ is a constant independent of $n$),
		we have an analog of \eqref{617}: For every $1\le i\le K-1,$
		\begin{align}  
			\widehat \bE \Big[&\sum_{J\in [N]^K}  \overline{\mathbb{E}} \Big[  \|X^n_t - X^n_{i,J} \|_{ L^2 (M_n( \bC))}    \  \Big\vert  \   \mathcal{F}_K^N , \widehat{\mathcal{F}}^n_{K}  \Big] \mathbbm{1}_{O_{K,J}}\Big] \nonumber \\
			&\le   
			C \beta_C  (\sqrt{\delta} +  
			\bE  |W_{t_{i+1}}^0 -W_{t_{i}}^0 | ) + C  \beta_F  
			\sqrt{\delta}
			\nonumber   \\
			&+  (\delta - (t-t_i))    \sum_{J\in [N]^K} \|\alpha^n_{i,J} \|_{ L^2 (M_n( \bC))}   \bP(O_{K,J})
			+ C\beta_C \bE \Big[ \sum_{J\in [N]^K} \zeta_{i,J}\mathbbm{1}_{O_{K,J}} \Big].
		\end{align}
		With the aid of \eqref{631}, the above quantity is bounded as in \eqref{617}. Therefore, the aforementioned proof works and we conclude the proof.

	\end{proof}

	\section{Proof of convergence} \label{sec: convergence}
	
	Recall that $\overline{V}$ denotes  the value function on the space of non-commutative laws, defined in  \eqref{eqn:value}, and   $\widehat{V}_{M_n(\bC)}$ denotes the value function for a $n\times n$ matrix control problem defined in \eqref{valuematrix}. 
	\begin{theorem}\label{thm:convergence}
		Suppose that Assumptions \textbf{A}, \textbf{B}, {and \textbf{C}} hold. For any sequence of $x_0^n\in M_n(\bC)_{\textup{sa}}^d$, such that operator norms are uniformly bounded in $n$ and the laws converge weakly* to $\lambda_0\in \Sigma_{d}^{2}$ as $n\rightarrow \infty$, we have
		$$
		\lim_{n\rightarrow \infty} \widehat{V}_{M_n(\bC)}(t_0,x_0^n) = \overline{V}(t_0,\lambda_0).
		$$
	\end{theorem}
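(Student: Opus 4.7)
The strategy is to compare matrix and free value functions through their discretizations introduced in Section 3. Proposition \ref{prop:discretization} together with \eqref{629} gives uniform approximations of $\widehat{V}_{M_n(\bC)}$ and $\overline{V}$ by $\widehat{V}^{K,N,R}_{M_n(\bC)}$ and $V^{K,N,R}_{\cB}$ respectively, with error $O(1/\sqrt{K}+K/N+1/R)$ independent of $n$. It thus suffices to show that, for fixed $K,N,R$, these discretized value functions are comparable in the large-$n$ limit, and then to let $K,N,R\to\infty$ and the $\epsilon$-optimality slack vanish.

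For the upper bound, realize $\lambda_0$ by some $x_0\in\cA$ containing a free Brownian motion $(S_t)$ freely independent of $x_0$ (Lemma \ref{lem:decreasing}). Take an $\epsilon$-optimal discrete free control $(\alpha_{i,J})$ with $\|\alpha_{i,J}\|_\infty\le R$ in the minimal filtration $\mathrm{W}^*(x_0,S_{t_{i'}}\colon i'\le i)$. By Kaplansky density, approximate each $\alpha_{i,J}$ in $L^2$ by a self-adjoint non-commutative polynomial $p_{i,J}$ in $(x_0,S_{t_1},\dots,S_{t_i})$ with comparable operator norm, and set matrix controls $\alpha^n_{i,J}(\widehat{\omega}) := p_{i,J}(x_0^n,\widehat{W}^n_{t_1}(\widehat{\omega}),\dots,\widehat{W}^n_{t_i}(\widehat{\omega}))$; these are $\widehat{\mathcal{F}}^n_i$-measurable with uniformly bounded operator norm. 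By Theorem \ref{thm: asymptotic freeness}, for almost every $\widehat{\omega}$ the joint non-commutative law of the polynomial matrix trajectory converges to that of the free trajectory, and dominated convergence combined with weak-$*$ continuity of $L^0$ (Assumption \textbf{C}) and the polynomial identity $\|\alpha\|_{L^2(\cA)}^2=\sum_j\tau(\alpha_j^2)$ yields convergence of the matrix cost to the free cost, giving the $\limsup$ bound.

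For the lower bound, for each $n$ take an $\epsilon$-optimal discrete matrix control $(\alpha^n_{i,J})$ truncated to operator norm $\le R$ via Lemma \ref{lem:operator_norm_truncation}. Choose a sample $\widehat{\omega}^n$ from the positive-probability event on which (a) the sample cost is at most the expected cost plus $\epsilon$, and (b) approximate asymptotic freeness at level $n$ holds for each $i$, i.e.\ traces of bounded-degree polynomials in the past tuple $(x_0^n,(\widehat{W}^n_{t_{i'}},\alpha^n_{i',J})_{i'\le i,J})$ paired with the independent GUE increment $\widehat{W}^n_{t_{i+1}}-\widehat{W}^n_{t_i}$ are within $1/n$ of their free limits (this event has probability tending to $1$ by Theorem \ref{thm: asymptotic freeness} and Lemma \ref{lem: GUE operator norm}). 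Compactness of bounded non-commutative laws in weak-$*$ then yields a subsequence along which the joint law of $(x_0^n,\widehat{W}^n_{t_i}(\widehat{\omega}^n),\alpha^n_{i,J}(\widehat{\omega}^n))$ converges to a limit, realized in some $\cB\in\bW$ as $(x_0,S_{t_i},\alpha_{i,J})$. In this limit $x_0$ has law $\lambda_0$, $(S_{t_i})$ forms a discrete free Brownian motion, and $S_{t_{i+1}}-S_{t_i}$ is freely independent of $\mathrm{W}^*(x_0,\alpha_{i',J},S_{t_{i'}}\colon i'\le i,J)$, so $(\alpha_{i,J})$ is admissible. Weak-$*$ continuity of the cost gives $V^{K,N,R}_{\cB}(t_0,x_0)\le\liminf_n \widehat{V}^{K,N,R}_{M_n(\bC)}(t_0,x_0^n)+\epsilon$, and \eqref{629} closes the bound after sending the parameters to their limits.

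The main obstacle is admissibility of the subsequential limit in the lower bound: the matrix controls depend on GUE noise in an arbitrary measurable way, so one must ensure the limit sits in a free filtration compatible with the limiting semicircular process. The resolution is the sample-wise argument combined with Theorem \ref{thm: asymptotic freeness}: by taking $\widehat{\omega}^n$ in the asymptotic-freeness event, we treat past matrix data (bounded in operator norm after truncation) as $Y^{(n)}$ and the classically-independent future GUE increment as the new GUE, obtaining free independence of $S_{t_{i+1}}-S_{t_i}$ from the past algebra in the limit. Auxiliary technicalities include operator-norm truncation (Lemma \ref{lem:operator_norm_truncation}) for compactness of non-commutative laws, a diagonal argument handling the finitely many admissibility conditions simultaneously, and dominated convergence (via uniform bounds on $L^0$ and $\|\alpha\|_{L^2}^2$) to transfer pointwise-in-$\widehat{\omega}$ law convergence to cost convergence.
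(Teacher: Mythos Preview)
Your overall architecture matches the paper's: discretize via Proposition \ref{prop:discretization}, then compare the discrete problems through asymptotic freeness and compactness of non-commutative laws. Two technical points in the upper bound deserve more care. First, asserting that the $\epsilon$-optimal discrete control lives in the \emph{discrete} minimal filtration $\mathrm{W}^*(x_0,S_{t_1}-S_{t_0},\dots,S_{t_i}-S_{t_{i-1}})$ is not automatic: the discretization of a continuous control adapted to $\mathrm{W}^*(x_0,(S_s)_{s\le t})$ lands in the algebra generated by the full continuous path, not just the increments at mesh points. The paper explicitly projects down via the $E$-convexity of $L$ and $g$ (its Step 2); you should invoke this. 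Second, Kaplansky density only controls $\|p_{i,J}(x_0,S_{\cdot})\|_\infty$, not the evaluation $\|p_{i,J}(x_0^n,\widehat W^n_{\cdot})\|_\infty$ on matrices, which can be much larger. The paper uses a stronger approximation lemma producing polynomials with a \emph{universal} operator-norm bound on the ball of radius $M$, combined with an indicator cutoff on the GUE norms; without something like this, your matrix control may fail the admissibility bound and the weak-$*$ convergence argument for the cost becomes delicate.

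The more serious gap is in your lower-bound selection. Your event (b), ``traces are within $1/n$ of their free limits,'' is ill-posed: since the $\alpha^n_{i,J}$ have no limit law, there is no ``free limit'' to compare to. The correct condition is that alternating products of \emph{centered} pieces have small trace, and Theorem \ref{thm: asymptotic freeness} gives only almost-sure convergence, not a per-$n$ rate; even the concentration estimates in the appendix yield $\widehat\bP(|\cdot|>\delta)\le Ce^{-cn^2\delta^2}$, which does not vanish for $\delta=1/n$. The paper avoids quantitative freeness entirely by working in the product probability space: the almost-sure freeness event $B^*$ has full measure, and a second-moment bound on $Z^n_J$ together with a reverse-Fatou argument guarantees that on a positive-probability subset of $B^*$ the cost condition $Z^{n_k}_J<\widehat\bE[Z^{n_k}_J]+\epsilon$ holds infinitely often along a subsequence. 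This is the step your sketch skips. On the other hand, your idea of selecting a single sample for all $J$ simultaneously is sound (the past tuple including all $\alpha^n_{i',J'}$ is still $\widehat{\cF}^n_i$-measurable and hence independent of the next GUE increment), and would eliminate the paper's amalgamated-free-product patching step, which is a genuine simplification.
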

	
	\begin{proof}
		For brevity, we shall write \(\|\cdot\|\) to denote the non-commutative \(L^2\)–norm \(\|\cdot\|_2 = \|\cdot\|_{L^2(\cA)}\) throughout the proof.  In contrast, the operator norm will always be displayed explicitly as \(\|\cdot\|_\infty\). 
		
		~

		\textbf{Upper bound:}    We first prove the upper bound  
		$$
		\limsup_{n\rightarrow \infty} \widehat{V}_{M_n(\bC)}(t_0,x_0^n) \leq \overline{V}(t_0,\lambda_0).
		$$
		
		\textbf{Upper bound, step 1 (discretization)}: We first use a discretization method (Proposition \ref{prop:discretization}) to reduce to the case of finite time intervals and a finite probability space for the common noise.  Given any $\epsilon>0$, we may find $\cA\in \bW$ containing a free Brownian motion $(S_t)_{t \in [t_0,T]}$ freely independent of $x_0 \in L^2(\cA)^d_{\textup{sa}}$ with $\lambda_{x_0} = \lambda_0$, and $N,K,R>0$ along with an admissible control $\alpha = (\alpha_{i,J})_{i,J} \in \mathbb{A}_{\cA,t_0,x_0}^{K,N,R}$ for the discretized problem such that 
		\begin{align} \label{521}
			\overline{V}(t_0,\lambda_0) =   \overline{V}_\cA (t_0,x_0) & =   \widetilde{V}_\cA (t_0,x_0)   \nonumber  \\
			&\geq  V_\cA^{K,N,R} (t_0,x_0) - \epsilon \nonumber \\
			&\ge \sum_{J\in [N]^K}\bP(O_{K,J})\big[\sum_{i=1}^K L_{\cA}(X_{i,J},\alpha_{i,J})\, \delta + g_{\cA}(X_{K,J})\big] - 2 \epsilon,
		\end{align}
		where $X_{i,J}$ is given by \eqref{eqn:discrete_dynamics}
		with the initial condition $x_0$.   
		Here we used  Lemma \ref{lem:decreasing} and chose  $N$, $K$, $R$  so that the error from the discretization of the matrix problem is smaller than $\epsilon$.

		\textbf{Upper bound, step 2 (restriction of ambient algebra)}: Using the $E$-convexity of $L$ and $g$ from  Assumption \textbf{B}, we may restrict to the algebra generated by the initial condition and the increments of the free Brownian motion.  In other words, we may assume that for each $i$ and $J$,
		\begin{align} \label{640}
			\alpha_{i,J} \in  \big(W^*(x_0, S_{t_1}{{-S_{t_0}}}, S_{t_2}- S_{t_1},\ldots , S_{t_{i}}- S_{t_i-1})\big)^d=:\cB_i.
		\end{align}
		Indeed, let $E_{\cB_i}: \cA \to \cB_i$ be the adjoint of the embedding of $\cB_i$ into $\cA$, and set $\alpha 
		'_{i,J}:=E_{\cB_i}(\alpha_{i,J}).$ We have $\|\alpha'_{i,J}\|_{L^\infty(\cA)} \leq\|\alpha_{i,J}\|_{L^\infty(\cA)} \le R$. Being a $L^2$-projection onto $L^2(\cB_i),$  $(\alpha_{i,J}')_{i, J}$ is an admissible control in the sense of Definition \ref{defn:july09.2024.1}. {{By the free independence of increments of free semi-circular process $(S_t)_t$,}} we deduce that the discretized process  associated to this new policy $(\alpha_{i,J}')_{i, J}$  is  given by $X_{i,J}'=E_{\cB_i}(X_{i,J})$.  Thanks to the $E$-convexity assumption on $L$ and $g$ (see Assumption \textbf{B}), this new policy 
		can only reduce the value of the discretized cost, thus satisfying \eqref{521} as well.

		
		\textbf{Upper bound, step 3 (polynomial control policy):}  The control policy $\alpha_{i,J}$ now takes values in the algebra $\cB_i$ generated by the initial condition and semi-circular increments thanks to \eqref{640}.  We will next approximate it by a non-commutative polynomial in the initial conditions and semi-circular increments.  Hence, by applying the same non-commutative polynomial to the matrix initial conditions and matrix Brownian motion, we will obtain a candidate control policy for the discretized matrix problem.
		
		Although it is immediate that $\alpha_{i,J}$ can be approximated by a polynomial of the generators, we also need to arrange that the evaluation of the polynomial on the matrices is bounded in operator norm.  Fix $M \geq 3 \sqrt{T}$ such that $\sup_n \norm{x_0^n} \leq M$; here $3 \sqrt{T}$ is chosen since  $\norm{S_{t_j} - S_{t_{j-1}}} \le  2 \sqrt{t_j - t_{j-1}} < 3 \sqrt{T}$. Using some standard facts in operator algebras \cite[Lemma 2.2]{jekel2024chi}, for every $\kappa > 0$, there is a non-commutative polynomial $p_{i,J}$ (in variables corresponding to $x_0$ and $S_{t_j} - S_{t_{j-1}}$ for $j=1,\cdots,i$) such that
		\[
		\norm{\alpha_{i,J} - p_{i,J}(x_0,S_{t_j} - S_{t_{j-1}}: j=1,\cdots,i )}< \kappa,
		\]
		and also
		\[
		\norm{p_{i,J}(y)} \leq R
		\]
		whenever $y = (y_j)_j$ is any tuple of variables with $\norm{y_j} \leq M$. This is what will allow us to achieve operator norm boundedness of the matrix approximations.  Meanwhile, because $L$ and $g$ are Lipschitz with respect to $\norm{\cdot}_2$ on the operator norm ball of radius $M$, if $\kappa>0$ is chosen sufficiently small, then the control policy
		\begin{align} \label{522}
			\widetilde{\alpha}_{i,J} := p_{i,J}(x_0,S_{t_j} - S_{t_{j-1}}: j=1,\cdots,i )
		\end{align}
		satisfies 
		\begin{align} \label{523} 
			\sum_{J\in [N]^K}\bP(O_{K,J})& \Big[\sum_{i=1}^K L_{\cA}(\widetilde X_{i,J},\widetilde \alpha_{i,J})\, \delta + g_{\cA}( \widetilde X_{K,J})\Big] \nonumber \\
			&\le  \sum_{J\in [N]^K}\bP(O_{K,J})\Big[\sum_{i=1}^K L_{\cA}(X_{i,J},\alpha_{i,J})\, \delta + g_{\cA}(X_{K,J})\Big] +  \epsilon,
		\end{align}
		where $\widetilde X_{i,j}$ is given by \eqref{eqn:discrete_dynamics} with $\alpha_{i,J}$ replaced by $\widetilde\alpha_{i,J}$.

		\textbf{Upper bound, step 4 (construction of matrix approximations)}:
		The polynomials $p_{i,J}$ may now be applied to the matrix initial conditions $x_0^n$ and GUE($n$) Brownian motion increments $(\widehat{W}^n_{t_j} - \widehat{W}^n_{t_{j-1}})_{j=1,\cdots,i}$.  Let
		\begin{align} \label{644}
			\alpha_{i,J}^n := \mathbbm{1}_{\sup_{j=1,\cdots,K}\norm{\widehat{W}_{t_j}^n - \widehat{W}_{t_{j-1}}^n} \leq M} \  p_{i,J}(x_0^n, \widehat{W}_{t_j}^{n} - \widehat{W}_{t_{j-1}}^n: j=1,\cdots,i).
		\end{align}
		Note that the resulting process $X_{i,J}^n$ in the state space is also given by a polynomial in the initial conditions and increments, thanks to (\ref{eqn:discrete_dynamics_matrix}), and the process $\widetilde X_{i,J}$ induced by $\widetilde{\alpha}_{i,J}$ is given by the same polynomial in the initial condition and semi-circular increments.  Because of the indicator function in \eqref{644}, $\alpha_{i,J}^n$ will be zero unless $\norm{\widehat{W}_{t_j}^n - \widehat{W}_{t_{j-1}}^n} \leq M$ for all $j=1,\cdots,K$, which forces the $\norm{\alpha_{i,J}^n} \leq R$ based on our choice of the polynomial $p_{i,J}$.

		By Theorem \ref{thm: asymptotic freeness} and our choice of $M$, we have $\limsup_{n \to \infty} \sup_{j=1,\cdots,K} \norm{\widehat{W}_{t_j}^n - \widehat{W}_{t_{j-1}}^n} \leq M$ almost surely, and so almost surely $\alpha_{i,J}^n$ eventually agrees with $p_{i,J}$ applied to the matrix increments.  Moreover, Theorem \ref{thm: asymptotic freeness} implies that the joint law of  $(x_0^n, \widehat{W}_{t_1}^{n} - \widehat{W}_{t_{0}}^n, \cdots, \widehat{W}_{t_K}^{n} - \widehat{W}_{t_{K-1}}^n)$  converges almost surely to the law of the freely independent variables  $(x_0, S_{t_1} - S_{t_{0}},\cdots, S_{t_K} - S_{t_{K-1}})$.  Since convergence in law is preserved by application of non-commutative polynomial functions, it follows that the joint non-commutative law of $p_{i,J}(x_0^n, \widehat{W}_{t_j}^{n} - \widehat{W}_{t_{j-1}}^n: j  = 1,\cdots,i)$ together with $x_0^n$ converges almost surely to the law of $p_{i,J}(x_0, S_{t_j} - S_{t_{j-1}}: j  = 1,\cdots,i)$ together with $x_0$.  Hence also the joint law of $\alpha_{i,J}^n$ together with $x_0^n$ converges almost surely to the joint law of $\widetilde \alpha_{i,J}$ together with $x_0$  because $\mathbbm{1}_{\sup_{j=1,\cdots,K}\norm{\widehat{W}_{t_j}^n - \widehat{W}_{t_{j-1}}^n} \leq M}$ converges to $1$  almost surely.  This finally implies almost sure convergence of the joint law of the matrix initial condition $x_0^n$, controls $\alpha_{i,J}^n$, and the output process $X_{i,J}^n$ to the joint law of $x_0$, $\widetilde{\alpha}_{i,J}$, and  $\widetilde X_{i,J}$.  As the Lagrangian is  continuous w.r.t. weak* topology (see Assumption \textbf{C}),  the corresponding cost will converge, i.e.,  
		\begin{align*}
			\lim_{n\rightarrow \infty} & \sum_{J\in [N]^K}\bP(O_{K,J})\widehat {\mathbb{E}}\Big[\sum_{i=1}^K L_{M_n(\bC)}({X}_{i,J}^n,a^n_{i,J})
			\delta + g_{M_n(\bC)}({X}^n_{K,J})   \Big]\\
			=&\ \sum_{J\in [N]^K} \bP(O_{K,J})  \Big[\sum_{i=1}^K L_{\cA}(\widetilde X_{i,J},\widetilde a_{i,J})
			\delta + g_{\cA}(\widetilde X_{K,J})\Big].
		\end{align*}
		
		\textbf{Upper bound, step 5 (conclusion):}  Therefore, we obtain that
		\begin{align*}
			\limsup_{n \to \infty} \widehat{V}_{M_n(\bC)}^{K,N,R}(t_0,x_0^n) &\leq     \limsup_{n \to \infty} 
			\sum_{J\in [N]^K}\bP(O_{K,J})\widehat {\mathbb{E}}\Big[\sum_{i=1}^K L_{M_n(\bC)}({X}_{i,J}^n,a^n_{i,J})
			\delta + g_{M_n(\bC)}({X}^n_{K,J})   \Big]\\ &=\sum_{J\in [N]^K} \bP(O_{K,J})  \Big[\sum_{i=1}^K L_{\cA}(\widetilde X_{i,J},\widetilde a_{i,J})
			\delta + g_{\cA}(\widetilde X_{K,J})\Big] \\
			&\overset{\eqref{523}}{\le} \sum_{J\in [N]^K} \bP(O_{K,J})  \Big[\sum_{i=1}^K L_{\cA}( X_{i,J},  a_{i,J})
			\delta + g_{\cA}(X_{K,J})\Big]  + \epsilon \\
			&\overset{\eqref{521}}{\le} \overline V(t_0,\lambda_0) + 3\epsilon.
		\end{align*}
		As the error produced by a discretization of the matrix problem is at most $\epsilon$ (uniformly in $n$),  we deduce that
		\[
		\limsup_{n \to \infty} \widehat{V}_{M_n(\bC)}(t_0,x_0^n) \leq \overline{V}(t_0,\lambda_0) + 4\epsilon.
		\]
		Since $\epsilon>0$ was arbitrary, the proof is complete.
		
		~
		
		
		
		\textbf{Lower bound:}  We now prove the lower bound  
		$$
		\liminf_{n\rightarrow \infty} \widehat{V}_{M_n(\bC)}(t_0,x_0^n) \geq \overline{V}(t_0,\lambda_0).
		$$    
		The proof structure is similar, starting with near optimizers of the finite-dimensional problem, with some key differences. Convergence can no longer be obtained by the polynomial approximation used in the upper bound argument because the approximations would need to be uniform in the matrix dimension for different control policies. Instead, we should rely on compactness in the space of laws combined with a selection principle to guarantee the concentration of the controls; the discretization of the classical probability space makes this much easier technically.
		
		\textbf{Lower bound, step 1 (discretization):}
		We again use  Proposition \ref{prop:discretization} to reduce to the case of finite time intervals and a finite probability space for the common noise, this time starting with the $n\times n$ matrix problems. Given any $\epsilon > 0$, we may find $N$, $K$, $R$, and admissible controls $(\alpha_{i,J}^{n})_{i,J}$ for the $n\times n$ matrix version of  discretized problems such that for all $ n\in \mathbb{N}$,
		\begin{align} \label{660}
			\widehat{V}_{M_n(\bC)}(t_0,x_0^n) \geq \sum_{J\in [N]^K} \bP(O_{K,J})\widehat{\mathbb{E}}\big[\sum_{i=1}^K L_{M_n(\bC)}({X}_{i,J}^{n},\alpha_{i,J}^{n})\, \delta + g_{M_n(\bC)}(X_{K,J}^n)\big] - \epsilon,
		\end{align}
		and at the same time  for all $\cA \in \bW$ {containing  a discretized semi-circular process} and $x_0 \in \cA$ such that $\lambda_{x_0} = \lambda_0$, 
		\begin{align} \label{661}
			V_{\cA}^{K,N,R}(t_0,x_0) \geq \widetilde{V}_{\cA * \mathcal{C}}(t_0,x_0) - \epsilon,
		\end{align}
		where  $\mathcal{C}$ denotes a tracial von Neumann algebra generated by an infinite free semi-circular family.

		\textbf{Lower bound, step 2 (asymptotic freeness):}  We fix a multi-index $J \in [N]^K$.  Our goal is construct a limiting near-optimal control for the problem with fixed values of $(W_{i,J}^0)_i$. 
		Recall that $\widehat{\mathcal{F}}_i^n$ denotes the $\sigma$-algebra generated by $\{\widehat{W}^n_{t_j} - \widehat{W}^n_{t_{j-1}}\}_{j=1}^{i}$. 
		In particular, $\widehat{W}_{t_i}^n - \widehat{W}_{t_{i-1}}^n$ is probabilistically independent of $\widehat{\mathcal{F}}_{i-1}^n$ and thus independent  of $\alpha^n_{1,J},\dots,\alpha^n_{i-1,J}$ as well. 
		
		Thus by the asymptotic freeness theorem (Theorem \ref{thm: asymptotic freeness}), we have $\widehat{\bP}$-almost surely that
		\begin{enumerate}
			\item $\lim_{n \to \infty} \norm{\widehat{W}_{t_i}^n - \widehat{W}^n_{t_{i-1}}}_{\infty} = 2(t_i - t_{i-1})^{1/2}$; 
			\item the non-commutative law of $\widehat{W}_{t_i}^n - \widehat{W}_{t_{i-1}}^n$ converges to that of a free semi-circular family where each coordinate has variance $t_i - t_{i-1}$;
			\item $\widehat{W}_{t_i}^n - \widehat{W}_{t_{i-1}}^n$ is asymptotically free from $(x_0^n,\alpha^n_{1,J},\dots,\alpha^n_{i-1,J}, \widehat{W}_{t_1}^n-\widehat{W}_{t_0}^n,\dots,\widehat{W}_{t_{i-1}}^n-\widehat{W}_{t_{i-2}}^n)$.
		\end{enumerate}
		
		\textbf{Lower bound, step 3 (selection):}  
		First, fix an increasing sequence of integers $\{n_k\}_{ k \ge 1}$ such that
		\begin{align} \label{711}
			\lim_{k \to \infty} \widehat{V}_{M_{n_k}(\bC)}^{K,N,R}(t_0,x_0^{n_k}) = \liminf_{n\rightarrow \infty} \widehat{V}_{M_n(\bC)}^{K,N,R}(t_0,x_0^n),
		\end{align}
		which follows because every sequence of real numbers has a subsequence converging to its $\liminf$.  Then we aim to select a further subsequence and a sample $\widehat \omega$ in the probability space $(\widehat{\Omega}, \widehat{\bP})$ where the properties (1) - (3) from  the above asymptotic freeness hold \emph{and} the value of 
		\begin{align*}
			Z^n_J (\widehat \omega):=\sum_{i=1}^K L_{M_n(\bC)}({X}_{i,J}^{n} (\widehat \omega),\alpha_{i,J}^{n}  (\widehat \omega) )\, \delta + g_{M_n(\bC)}(X_{K,J}^n  (\widehat \omega))
		\end{align*}
		is less than its expectation plus $\epsilon$. Note that $J\in [N]^K$ is still fixed.
		We claim that there exists $M>0$, depending only on $R$, $K$, $N$ and the Lipschitz constants of $g$ and $L^0$ where $L(X,\alpha) = L^0(X,\alpha) + c \norm{\alpha}_2^2$, such that   for all $n\in \mathbb{N}$ and $J \in [N]^K$,
		\begin{align} \label{662}
			\widehat{\mathbb{E}}[|Z^n_J|^2] \leq M.
		\end{align} 
		Indeed, $X_{i,J}^n$ is composed of a linear combination of $\{\alpha_{i',J}^n\}_{i'=1,\cdots,i}$, which is bounded in operator norm, {along with $\{w_{i,J}\}_{i'=1,\cdots,i}$, which satisfies $\sup_{i,J} |w_{i,J}| \le 2$ (see \eqref{605}),} and $\{\widehat{W}_{t_i}^n - \widehat{W}_{t_{i-1}}^n\}_{i'=1,\cdots,i}$ which satisfies $\sup_n \sup_{1\le i\le K} \widehat{\mathbb{E}}[ \norm{\widehat{W}_{t_i}^n - \widehat{W}^n_{t_{i-1}}}_{\infty}^4] < \infty$.  It follows from the Lipschitzness of $L^0$ and $g$ that the application of these functions to $X^n_{i,J}$ and $\alpha^n_{i,J}$ yields
		\eqref{662}. 
		
		Let $p: = \widehat{\mathbb{P}}(Z^n_J - \widehat{\mathbb{E}} [Z^n_J] < \epsilon)$. We suppress the notations $n$ and $J$ in $p$ for the sake of readability. Then, using H\"older's inequality  and \eqref{662},
		\begin{align*}
			0 &= \widehat{\mathbb{E}}[(Z^n_J - \widehat{\mathbb{E}} [Z^n_J]) \mathbbm{1}_{Z^n_J - \widehat{\mathbb{E}}[Z^n_J] \geq \epsilon}] + \widehat{\mathbb{E}}[(Z^n_J - \widehat{\mathbb{E}} [Z^n_J]) \mathbbm{1}_{Z^n_J - \widehat{\mathbb{E}}[Z^n_J] <\epsilon}] \\
			&\geq \epsilon (1 - p) - \big( \widehat{\mathbb{E}}[(Z_J^n- \widehat{\mathbb{E}}[Z^n_J])^2]\big)^{1/2} p^{1/2} \\
			&\geq \epsilon(1-p) - M^{1/2} p^{1/2}.
		\end{align*}
		Thus, $\epsilon(1 - p) \leq M^{1/2} p^{1/2}$, and hence $\epsilon^2 (1-p)^2 \leq M p$.  By rearranging, we get
		\[
		\epsilon^2 \leq Mp + 2p \epsilon^2 - p^2 \epsilon^2 \leq (M+2\epsilon^2) p.
		\]
		Hence,
		\[
		\widehat\bP(Z^n_J - \widehat{\mathbb{E}} [Z^n_J] < \epsilon) = p \geq \frac{\epsilon^2}{M + 2 \epsilon^2}.
		\]
		Note that this bound is independent of $n$. 
		
		{Now let
			\[
			T_k := \frac{1}{k} \sum_{j=1}^k \mathbbm{1}_{Z^{n_j}_J - \widehat{\mathbb{E}} [Z^{n_j}_J] < \epsilon},
			\]
			and
			\[
			T := \limsup_{k \to \infty} T_k.
			\]
			Note that $T > 0$ implies that $Z^{n_k}_J - \widehat{\mathbb{E}} [Z^{n_k}_J] < \epsilon$ for infinitely many $k$.  Since $0 \leq T_k \leq 1$, Fatou's lemma applied to $1 - T_k$ implies that
			\[
			\mathbb{E} T \geq \limsup_{k \to \infty} \mathbb{E} T_k = \limsup_{k \to \infty} \frac{1}{k} \sum_{j=1}^k \bP (Z^{n_j}_J - \widehat{\mathbb{E}} [Z^{n_j}_J] < \epsilon) \geq \frac{\epsilon^2}{M + 2 \epsilon^2}.
			\]
			Therefore, $T > 0$ with positive probability, and hence with positive probability, we have that $Z^{n_k}_J - \widehat{\mathbb{E}} [Z^{n_k}_J] < \epsilon$ infinitely often.}
		Because of the almost sure statements about asymptotic freeness in the previous step, there will be some sample $\widehat \omega_J \in \widehat{\Omega}$ where these statements hold and also $Z^{n_k}_J (\widehat \omega_J) - \widehat{\mathbb{E}} [Z^{n_k}_J] < \epsilon$ infinitely often.   In particular, taking such an $\widehat \omega_J$ and passing to a further subsequence $\{n_k\}_{k \ge 1}$, we can arrange that 
		\begin{align} \label{633}
			\sum_{i=1}^K L_{M_{n_k}(\bC)}({X}_{i,J}^{n_k}(\widehat \omega_J),\alpha_{i,J}^{n_k} (\widehat \omega_J) )\, \delta & + g_{M_{n_k}(\bC)}(X_{K,J}^{n_k}(\widehat \omega_J)) \nonumber \\
			&\leq \widehat{\mathbb{E}}  \Big[\sum_{i=1}^K L_{M_{n_k}(\bC)}({X}_{i,J}^{n_k},\alpha_{i,J}^{n_k})\, \delta + g_{M_{n_k}(\bC)}(X_{K,J}^{n_k})  \Big]+ \epsilon,
		\end{align}
		and in addition
		\begin{enumerate}
			\item $\lim_{k \to \infty} \norm{\widehat{W}_{t_i}^{n_k}(\widehat \omega_J)-\widehat{W}_{t_{i-1}}^{n_k}(\widehat \omega_J)}_{\infty} = 2(t_i-t_{i-1})^{\frac{1}{2}}$;
			\item the non-commutative law of $\widehat{W}_{t_i}^{n_k}(\widehat \omega_J)-\widehat{W}_{t_{i-1}}^{n_k}(\widehat \omega_J)$ converges to that of a free semi-circular family;
			\item $\widehat{W}_{t_i}^{n_k}(\widehat \omega_J)-\widehat{W}_{t_{i-1}}^{n_k}(\widehat \omega_J)$ is asymptotically free from
			\[
			(\alpha_{1,J}^{n_k}(\widehat \omega_J),\dots,\alpha_{i-1,J}^{n_k}(\widehat \omega_J), \widehat{W}_{t_1}^{n_k}(\widehat \omega_J) - \widehat{W}_{t_0}^{n_k}(\widehat \omega_J),\dots,\widehat{W}_{t_{i-1}}^{n_k}(\widehat \omega_J)-\widehat{W}_{t_{i-2}}^{n_k}(\widehat \omega_J)).
			\]
		\end{enumerate}
		We furthermore note that we can arrange that the subsequence $(n_k)_{k \in \bN}$ is the same for all $J \in [N]^K$, by iterating through the choices of $J$ and choosing for each $J$ a further subsequence of the current one.
		
		\textbf{Lower bound, step 4 (compactness)}: Let $J\in [N]^K$ be fixed. Note that the matrices $\widehat{W}_{t_i}^{n_k}(\widehat \omega_J)-\widehat{W}_{t_{i-1}}^{n_k}(\widehat \omega_J)$ and $\alpha_{i,J}^{n_k}(\widehat \omega_J)$ are uniformly bounded in operator norm as $k \to \infty$.  Therefore, by compactness of the space of non-commutative laws whose  norms are bounded by $R+1$, by passing to further subsequence, we can arrange that the sequence of joint laws of $(x_0^{n_k},\widehat{W}_{t_1}^{n_k}(\widehat \omega_J)-\widehat{W}_{t_0}^{n_k}(\widehat \omega_J),\dots,\widehat{W}_{t_K}^{n_k}(\widehat \omega_J)-\widehat{W}_{t_{K-1}}^{n_k}(\widehat \omega_J), \alpha_{1,J}^{n_k}(\widehat \omega_J),\dots,\alpha_{K,J}^{n_k}(\widehat \omega_J))$ has some limit point which is the law of some tuple $(x_0,S_{t_1}-S_{t_0},\dots,S_{t_K}-S_{t_{K-1}},\alpha_{1,J},\dots,\alpha_{K,J})$ in some tracial von Neumann algebra $\mathcal{A}_J$.  Note that as the law of $x_0^n$ converges weakly* to  $\lambda_0$, we have $\lambda_{x_0} = \lambda_0.$ 
		Here $S_{t_i} - S_{t_{i-1}}$ is a semi-circular increment freely independent of $(x_0,S_{t
			_1} - S_{t_{0}},\dots,S_{t
			_{i-1}} - S_{t_{i-2}},\alpha_{1,J},\dots,\alpha_{i-1,J})$.  Thus, by the assumption that $L_{{\cA}_J}$ and $g_{{\cA}_J}$ are continuous with respect to convergence in law,  denoting  by $\widetilde X_{i,j}$ the solution defined in \eqref{eqn:discrete_dynamics},
		\begin{align} \label{666}
			\sum_{i=1}^K L_{\cA_J}({X}_{i,J},\alpha_{i,J})\, \delta &+ g_{\cA_J} (X_{K,J}) \nonumber \\
			&\leq   \lim_{k\to \infty} \Big[ \sum_{i=1}^K L_{M_{n_k}(\bC)}({X}_{i,J}^{n_k}(\widehat \omega_J),\alpha_{i,J}^{n_k}(\widehat \omega_J)) \, \delta + g_{M_{n_k}(\bC)}(X_{K,J}^{n_k}(\widehat \omega_J)) \Big]\nonumber\\
			&\overset{\eqref{633}}{\le} \liminf_{k \to \infty}\widehat{\mathbb{E}}  
			\Big[\sum_{i=1}^K L_{M_{n_k}(\bC)}({X}_{i,J}^{n_k},\alpha_{i,J}^{n_k})\, \delta + g_{M_{n_k}(\bC)}(X_{K,J}^{n_k})\Big] + \epsilon.
		\end{align}

		\textbf{Lower bound, step 5 (patching):}  The preceding argument considered a fixed $J \in [N]^K$ and found a candidate for the discretized free problem for that value of $J$ in a tracial von Neumann algebra $\mathcal{A}_J$.  Next, we want to show that these can all be taken in the same tracial von Neumann algebra with the same free Brownian motion.  To this end, let $\mathcal{A}_{i,J}$ be the tracial von Neumann algebra generated by $x_0$, $S_{t_1} - S_{t_{0}}$, \dots, $S_{t_i} - S_{t_{i-1}}$, $\alpha_{1,J}$, \dots, $\alpha_{i,J}$ in $\mathcal{A}_J$.   Let $\mathcal{A}_0:= \mathrm{W}^*(x_0)$.  Then define $\mathcal{A}_i$ inductively as follows.  Assume that $\mathcal{A}_0 \subseteq 
		\mathcal{A}_1 \subseteq \dots \subseteq \mathcal{A}_{i-1}$ have been chosen so that for each $1\le j \le i-1 $, the algebra $\mathcal{A}_j$ contains $S_{t_j} - S_{t_{j-1}}$ and $\alpha_{j,J}$ for all $J \in [N]^K$, and $S_{t_j} - S_{t_{j-1}}$ is freely independent  of $\mathcal{A}_{j-1}$.  We obtain $\mathcal{A}_i$ by an iterated amalgamated free product construction with iterations indexed by $J \in [N]^K$ (ordered lexicographically for concreteness, although the particular order we choose does not matter).  We start out with $\mathcal{A}_{i-1} * \mathrm{W}^*(S_{t_i} - S_{t_{i-1}})$.  At each stage, we take the amalgamated free product of the existing algebra with $\mathcal{A}_{i,J}$ for all $J\in [N]^K$ with amalgamation over $\mathcal{A}_{i-1,J} * \mathrm{W}^*(S_{t_i} - S_{t_{i-1}})$ which can be viewed as a subalgebra of
		$\mathcal{A}_{i,J}$.  This produces a common algebra $\mathcal{A}_i$ with the desired properties.
		
		\textbf{Lower bound, step 6 (conclusion)}: We now have a common filtration of von Neumann algebras  $\mathcal{A}_1 \subseteq \dots \subseteq \mathcal{A}_K$ such  that  $\mathcal{A}_K$ contains all $x_0$, $S_{t_1} - S_{t_{0}}$, \dots, $S_{t_K} - S_{t_{K-1}}$ and $\alpha_{1,J}$, \dots, $\alpha_{K,J}$ for all $J \in [N]^K$ in a natural way.  Then consider the discrete probability space $[N]^K$ with the natural classical filtration of subproducts.  We can view $\alpha_{i,J}$ as a random element here as $J$ ranges over the discrete classical probability space $[N]^K$, and call this random variable $\alpha_i$.  Then the semi-circular increments $S_{t_1} - S_{t_{0}}$, \dots, $S_{t_K} - S_{t_{K-1}}$ and the controls $\alpha_1$, \dots, $\alpha_K$ provide a candidate for the discretized problem in the free setting, so that letting $(X_{i,J})_{i,J}$ be the associated stochastic process,
		\begin{align} \label{412}
			V_{\mathcal{A}_K}^{K,N,R}(t_0,x_0) &\leq  \sum_{J \in [N]^K}\bP(O_{K,J}) \left[ \sum_{i=1}^N L_{\cA_K}(X_{i,J},\alpha_{i,J})\, \delta + g_{\cA}(X_{K,J}) \right] \nonumber  \\
			&\overset{\eqref{666}}{\le} \sum_{J \in [N]^K} \bP(O_{K,J})\liminf_{k \to \infty} \widehat{\mathbb{E}} \left[ \sum_{i=1}^K L_{M_{n_k}(\bC)}({X}_{i,J}^{n_k},\alpha_{i,J}^{n_k})\, \delta + g_{M_{n_k}(\bC)}(X_{K,J}^{n_k}) \right] + \epsilon \nonumber  \\
			&\overset{\eqref{660}}{\le}  \liminf_{k \to \infty} \widehat{V}_{M_{n_k}(\bC)}(t_0,x_0^{n_k}) + 2 \epsilon \overset{\eqref{711}}{=}  
			\liminf_{n \to \infty} \widehat{V}_{M_{n}(\bC)}(t_0,x_0^{n}) + 2 \epsilon  
		\end{align}
		(recall that $\{n_k\}_{k  \ge 1}$ are equal for all  $J\in [N]^K$).
		Therefore, noting that $\lambda_{x_0} =\lambda_0,$ 
		\begin{align*}
			\overline{V}(t_0,\lambda_0) =  \overline{V}_{\mathcal{A}_K * \cC }(t_0,x_0)& \le  \widetilde{V}_{\mathcal{A}_K * \cC}(t_0,x_0)  \\
			&
			\overset{\eqref{661}}{\le}  V_{\mathcal{A}_K}^{K,N,R}(t_0,x_0) + \epsilon \overset{\eqref{412}}{\le}  \liminf_{n \to \infty} \widehat{V}_{M_n(\bC)}(t_0,x_0^n) + 3 \epsilon.
		\end{align*}
		Since $\epsilon>0$ was arbitrary, the proof is complete.
	\end{proof}
	
	\begin{remark}
		The $E$-convexity of $L$ and $g$ in Assumption \textbf{B} plays a pivotal role in establishing the upper bound on $n\times n$ matrix-version value function. In a general von Neumann algebra $\mathcal B$—one that contains both the initial datum and a free semicircular Brownian motion—controls need not be approximable by matrices (e.g.\ when $\mathcal B$ fails to be Connes-embeddable). However, $E$-convexity allows us to restrict attention to the free product of the subalgebras generated by the initial condition and by the semicircular motion. This reduced algebra admits matrix approximations, which in turn yields the desired upper bound.
		
		On the other hand, no $E$-convexity condition on $L$ and $g$ is needed to prove the corresponding lower bound  on the $n\times n$ matrix-value function. Indeed any near-optimal controls in the $n\times n$ matrix   problem already provides a candidate in the free control formulation. 
	\end{remark}

	\section{Application to Large Deviations} \label{sec: large deviations}
	
	In this section, we offer a new perspective that \emph{heuristically} derives a large deviation principle  of the $d$-tuple of  GUE($n$) random matrices  $ \widehat{W}^n_1 $ on $M_n(\bC)_{\sa}^d$ for general $d \ge 1$ (compare \cite{biane2003large}), by leveraging the convergence of the value function associated with the Hamilton-Jacobi equation.  The following Bryc's theorem serves as a key ingredient in the large deviation analysis. We refer to \cite[\S 3.3]{rassoul2015course} for the proof.
	
	\begin{proposition}[Bryc's Theorem]
		Let $\{X_n\}_{n  \ge 1}$ be a family of random variables taking values in a metric space $\mathcal{X}$.
		Assume that   $\{X_n\}_{n  \ge 1}$ is exponentially tight, i.e.  for every $L \in \mathbb R$, there exists a compact set $K_L \subseteq \mathcal{X}$ such that
		\[
		\limsup_{n \to \infty}  \frac{1}{n}  \log \mathbb{P}(X_n \notin K_L) \le -L,
		\]
		and for every bounded continuous function $f \in C_b(\mathcal{X})$, the limit
		\begin{align} \label{expmoment}
			\Lambda(f) := \lim_{n \to \infty}  \frac{1}{n} \log  \mathbb E e ^{nf(X_n)} 
		\end{align}
		exists. Then, the law of $\{X_n\}_{n  \ge 1}$ on $\mathcal{X}$  satisfies a large deviation principle  with speed $n$ and a  rate function $I: \mathcal{X} \to [0, \infty]$ given by the Legendre–Fenchel transform  
		\[
		I(x) := \sup_{f \in C_b(\mathcal{X})} \left\{ f(x) - \Lambda(f) \right\}.
		\]
		In other words, for any  Borel measurable set  $E \subseteq \mathcal X,$
		\[
		-\inf_{x \in \operatorname{int}(E)} I(x)
		\le 
		\liminf_{n\to\infty} \frac{1}{n} \log \mathbb{P}(X_n \in E)
		\le 
		\limsup_{n\to\infty} \frac{1}{n} \log \mathbb{P}(X_n \in E)
		\le 
		-\inf_{x \in \overline{E}} I(x).
		\]
		
	\end{proposition}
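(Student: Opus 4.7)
My plan is to prove the upper bound on closed sets and the lower bound on open sets separately, in the standard Bryc framework. The case $I(x)=\infty$ is handled in each step by letting the near-optimal $f$ in the variational formula $I(x)=\sup_f(f(x)-\Lambda(f))$ be taken with $f(x)-\Lambda(f)$ arbitrarily large; the choice $f\equiv 0$ yields $\Lambda(0)=0$, so $I\ge 0$ and $I$ is a bona fide nonnegative candidate rate function.

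For the upper bound, I would start pointwise. Fix $x$ with $I(x)<\infty$ and $\varepsilon>0$, and pick $f\in C_b(\mathcal{X})$ realizing $f(x)-\Lambda(f)>I(x)-\varepsilon$. Chebyshev applied to $e^{nf}$ gives $\bP(X_n\in B(x,\delta))\le e^{-n\inf_{B(x,\delta)} f}\,\bE e^{nf(X_n)}$, so after taking $\tfrac{1}{n}\log$ and $\limsup$, and using the assumed limit \eqref{expmoment}, the upper bound becomes $-\inf_{B(x,\delta)}f+\Lambda(f)$; continuity of $f$ drives this to $-f(x)+\Lambda(f)\le -I(x)+\varepsilon$ as $\delta\downarrow 0$. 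To upgrade to a closed set $F$, I would cover the compact set $F\cap K_L$ by finitely many such balls, apply the union bound, and use exponential tightness to dispose of $F\setminus K_L$ at cost $e^{-nL}$; sending $L\to\infty$ and $\varepsilon\downarrow 0$ yields $\limsup\tfrac{1}{n}\log\bP(X_n\in F)\le-\inf_F I$.

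For the lower bound, fix an open $G$ and $x\in G$ with $I(x)<\infty$, choose $\delta>0$ with $B(x,\delta)\subset G$, and any $M>I(x)$. The key is the continuous, bounded bump $f(y):=-M\min(1,\dist(y,x)/\delta)$, so that $f(x)=0$, $-M\le f\le 0$, and $f\equiv-M$ outside $B(x,\delta)$. Splitting $\bE e^{nf(X_n)}$ over $\{X_n\in B(x,\delta)\}$ and its complement gives $\bE e^{nf(X_n)}\le\bP(X_n\in G)+e^{-nM}$. Since $\Lambda(f)\ge f(x)-I(x)=-I(x)>-M$ by the variational definition of $I$, the exponential moment $e^{n\Lambda(f)+o(n)}$ strictly dominates $e^{-nM}$, forcing $\bP(X_n\in G)\ge\tfrac{1}{2}e^{n\Lambda(f)+o(n)}$ eventually. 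Hence $\liminf\tfrac{1}{n}\log\bP(X_n\in G)\ge\Lambda(f)\ge-I(x)$, and taking the infimum over $x\in G$ concludes.

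The delicate step is the lower bound: it requires the bump $f$ to be localized near $x$ (so $\bE e^{nf(X_n)}$ reduces to $\bP(X_n\in G)$ up to an exterior penalty) while still producing an exponential moment large enough to overwhelm that penalty. The strict inequality $\Lambda(f)>-M$, which is available precisely because $M$ can be slotted between $I(x)$ and $\infty$ and because $\Lambda$ is tied to $I$ through Legendre duality, is the single observation that makes the dichotomy work; without it the $e^{-nM}$ term could swamp the signal and the lower bound would collapse.
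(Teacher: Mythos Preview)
The paper does not give its own proof of this proposition; it simply cites \cite[\S 3.3]{rassoul2015course} and moves on. Your sketch is a correct rendition of the standard textbook argument for Bryc's theorem (exponential Chebyshev plus a finite covering and exponential tightness for the upper bound; a bump function with the strict inequality $\Lambda(f)\ge -I(x)>-M$ for the lower bound), so there is nothing to compare against and nothing to correct.
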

	
	
	With the aid of Bryc's theorem, we \emph{heuristically} obtain a large deviation rate function for $\arctan\widehat{W}^n_1$. The motivation for working with $\arctan\widehat{W}^n_1$ rather than $\widehat{W}^n_1$ itself is that $\arctan\widehat{W}^n_1$ is uniformly bounded in operator norm and therefore takes values in a metrizable space, allowing us to apply Bryc's theorem.
	Let $\mathcal X$   be the space of non-commutative laws $\Sigma_{d,\pi/2}$ endowed with the weak-* topology (see Section \ref{subsecNon-commutativeLaws} for details). Note that this space is metrizable (see \eqref{metric}) and compact.  The computation of the exponential moment \eqref{expmoment} can be carried out by invoking our convergence result for the matrix value function. 
	%

	To  analyze the exponential moment,
	let $\varphi=(\varphi_\cA)_{\cA \in \bW}$ be a Lipschitz and locally bounded $W^*$-tracial function 
	such that $\varphi \circ \arctan$ is $E$-convex. In other words, $\varphi \circ \arctan$ is regarded as a terminal cost in a value function, satisfying Assumptions \textbf{A}, \textbf{B} and \textbf{C}.
	Define
	\begin{align}
		\widehat \Lambda_n(\varphi):= -\frac{1}{n^2}\log   \mathbb{E}\big[ e^{-n^2  \varphi_n(\arctan \widehat{W}^n_1) }  \big],
	\end{align}
	where we abbreviate the notation $\varphi_n := \varphi_{M_n(\mathbb C)^d_{\text{sa}}}.$

	To evaluate this quantity, we use the following formula, which can be deduced from classical stochastic control theory or from the Bou{\'e}--Dupuis formula \cite{BoueDupuis1998variational}.  Since we are working on the matrix space $M_n(\bC)_{\sa}^d$ while the formula is usually stated for Euclidean space, we explain the normalization need to translate between the two settings.
	
	\begin{lemma}\label{bd}
		Let $\psi_n: M_n(\bC)_{\sa}^d \to \bR$ be bounded and Borel-measurable.  Let $(\widehat{W}_t^n)_{t \in [0,1]}$ be a $\textup{GUE}(n)$ Brownian motion on $M_n(\bC)_{\sa}^d$.  Then
		\[
		-\frac{1}{n^2} \log \mathbb{E} [e^{-n^2 \psi_n(\widehat{W}_1^n)}] = \inf_{\alpha^n} \mathbb{E} \left[ \frac{1}{2} \int_0^1 \norm{\alpha^n_t}_2^2\,dt + \psi_n\left(\widehat{W}_1^n + \int_0^1 \alpha^n_t\,dt \right) \right],
		\]
		where the infimum is over all controls $\alpha^n: [0,1] \to M_n(\bC)_{\sa}^d$ that are progressively measurable with respect to the filtration generated by $(\widehat{W}_t^n)_{t \in [0,1]}$.
	\end{lemma}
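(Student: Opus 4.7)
The plan is to deduce this matrix-valued Bou\'e--Dupuis formula from the classical Bou\'e--Dupuis variational principle on Euclidean space \cite{BoueDupuis1998variational} via an isometric identification of $M_n(\bC)_{\sa}^d$ with $\bR^{dn^2}$, carefully tracking the $1/n$-scaling built into the normalized-trace inner product $\tr_n$.

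First, I would fix the real orthonormal basis $\{E_{\widehat i\widehat j}\}_{\widehat i,\widehat j=1}^n$ of $M_n(\bC)_{\sa}$ from \eqref{eqn:n_basis}, which is orthonormal with respect to $\tr_n$; together with the $d$ coordinates this yields a linear isometry $\Phi: M_n(\bC)_{\sa}^d \to \bR^{dn^2}$ sending $\norm{\cdot}_2 = \norm{\cdot}_{L^2(M_n(\bC))}$ to the Euclidean norm. From the definition \eqref{gue} of the $\textup{GUE}(n)$ Brownian motion, the coordinates of $\Phi(\widehat W_t^n)$ are exactly the independent scaled Brownian motions $\tfrac{1}{n}\widehat W_t^{\widehat i \widehat j,l}$, each of variance $t/n^2$. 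Hence $B_t := n\,\Phi(\widehat W_t^n)$ is a standard Brownian motion on $\bR^{dn^2}$, and the filtration generated by $(\widehat W_t^n)$ coincides with that of $(B_t)$.

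Next, I would apply the classical Bou\'e--Dupuis formula to the bounded Borel function $\Psi: \bR^{dn^2} \to \bR$ defined by $\Psi(x) := n^2 \psi_n\bigl(\Phi^{-1}(x/n)\bigr)$, obtaining
\[
-\log \mathbb E\bigl[e^{-\Psi(B_1)}\bigr] = \inf_{\beta}\, \mathbb E\!\left[\tfrac{1}{2}\int_0^1 |\beta_t|^2\,dt + \Psi\!\left(B_1 + \int_0^1 \beta_t\,dt\right)\right],
\]
the infimum taken over progressively measurable $\beta:[0,1]\to \bR^{dn^2}$ (with $\mathbb E\int_0^1|\beta_t|^2\,dt<\infty$). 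By construction the left-hand side equals $-\log \mathbb E[e^{-n^2 \psi_n(\widehat W_1^n)}]$. On the right, I would change variables by $\alpha_t := \Phi^{-1}(\beta_t/n)$: since $\Phi$ is a linear isometry, $|\beta_t|^2 = n^2 \norm{\alpha_t}_2^2$, and $\Phi^{-1}(B_1/n) + \int_0^1 \alpha_t\,dt = \widehat W_1^n + \int_0^1 \alpha_t\,dt$. This substitution is a bijection between admissible Euclidean and matrix controls (with the filtration identification above), so the two infima agree. Dividing through by $n^2$ then yields the asserted identity.

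The only real obstacle is bookkeeping the $1/n$-normalization inherent in the definition of the $\textup{GUE}(n)$ Brownian motion with respect to $\tr_n$; once the isometric identification $\Phi$ and the rescaling $B = n\Phi(\widehat W^n)$ are in place, the lemma is a direct corollary of the classical variational formula, with no additional analytic work required.
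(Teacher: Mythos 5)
Your proof is correct and takes essentially the same approach as the paper: both define the isometric identification of $M_n(\bC)_{\sa}^d$ with $\bR^{dn^2}$ via the $\tr_n$-orthonormal basis, rescale by $n$ to get a standard Euclidean Brownian motion, apply the classical Bou\'e--Dupuis formula, change variables on the controls, and divide by $n^2$. The only difference is notational (your $\Phi$ is the inverse of the paper's $\mathcal T$).
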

	
	\begin{proof}
		Recall that $M_n(\bC)_{\sa}$ is equipped with the inner product $\ip{A,B} = \tr_n(AB)$.  We define a linear isometry $\bR^{n^2} \to M_n(\bC)_{\sa}$ using the orthonormal basis in \S \ref{subsec: value function definitions}, and let $\mathcal T: \bR^{dn^2} \to M_n(\bC)_{\sa}^d$ be the direct sum of $d$ copies.  Recall from \S \ref{subsec: value function definitions} that the GUE($n$) Brownian motion $(\widehat{W}_t^n)_{t \in [0,1]}$ is given by summing independent standard Brownian motions multiplied by each basis element and then dividing by $n$.  The result is that $B^n_t = n \mathcal T^{-1} (\widehat{W}_t^n)$ is a standard Brownian motion on $\bR^{dn^2}$.  We can apply \cite[Theorem 1]{BoueDupuis1998variational} to $n^2 \psi_n(n^{-1} \mathcal T (B^n_1))$ viewed as a functional of $(B^n_t)_{t \in [0,1]}$ to obtain that
		\[
		-\log \mathbb{E} [e^{-n^2 \psi_n(n^{-1} \mathcal T (B^n_1))}] = \inf_{\beta^n} \mathbb{E} \left[ \frac{1}{2}\int_0^1 \norm{\beta^n_t}_2^2\,dt + n^2 \psi_n\left(n^{-1}\mathcal T\left(B^n_t + \int_0^1 \beta^n_t\,dt \right) \right) \right],
		\]
		where $\beta^n: [0,1] \to \bR^{dn^2}$ is progressively measurable.  Letting $\alpha^n_t := n^{-1} \mathcal T (\beta^n_t)$, we thus obtain
		\[
		-\log \mathbb{E} [e^{-n^2 \psi_n(\widehat{W}_1^n)}] = \inf_{\alpha^n} \mathbb{E} \left[ \frac{1}{2} \int_0^1 n^2 \norm{\alpha^n_t}_2^2\,dt + n^2 \psi_n\left(\widehat{W}_1^n + \int_0^1 \alpha^n_t\,dt \right) \right],
		\]
		and after dividing both sides by $n^2$, we obtain the asserted formula.
	\end{proof}
	
	Motivated by this, for $\cA \in \bW$, define
	\begin{align}
		\widetilde \Lambda_{\cA} (\varphi):= \inf_{\alpha} \Bigl[ \frac{1}{2} \int_0^1 \|\alpha_t \|_{L^2(\cA)}^2 dt  +    (\varphi\circ \arctan) \Big(S_1  + \int_0^1 \alpha _t dt \Big) \Bigr],
	\end{align}
	where the control belongs to $\bA_{\cA}^{0,1}$ and is classically deterministic with initial condition $0$. Observe that this corresponds to the value function $\widetilde{V}_\cA(t_0,x_0)$ in \eqref{eqn:l2_value} with a quadratic Lagrangian and a terminal cost function $\varphi$ (at terminal time $T=1$), evaluated at $t_0=0$ and $x_0=0$.  Moreover, in light of Lemma \ref{lem: reducing to deterministic controls}, since there is no common noise $(W^0_t)_{t \ge 0}$, the value function can be evaluated using only deterministic controls, which is why we have omitted the expectation in the equation above.
	
	Now, as in \eqref{def:bar}, set
	\begin{align}
		\overline{\Lambda}_{\cA}(\varphi):= \inf_{\iota: \mathcal A \to \mathcal B}  \widetilde  \Lambda_{ \cB}(\varphi),
	\end{align}
	where the infimum is performed over   tracial $W^*$--embeddings  $\iota: \mathcal A \to \mathcal B$. Then $(\overline{\Lambda}_{\cA})_{\cA \in \bW}$ is tracial, and thus we regard this as a function $\overline{\Lambda}$ only depending on the test function $\varphi$.
	
	By Lemma \ref{bd} (with $\psi: = \varphi \circ \arctan$) and the convergence result (Theorem \ref{main thm}),
	\begin{align}\label{bryc}
		\lim _{n\rightarrow \infty}  \left(-\frac{1}{n^2}\log   \mathbb{E}\big[ e^{-n^2  \varphi_n(\arctan \widehat{W}^n_1) }  \big]   \right)   = \lim _{n\rightarrow \infty} \widehat \Lambda_n (\varphi) = \overline{\Lambda} (\varphi).
	\end{align}

	Observe that the validity of Bryc's theorem requires the exponential moment estimates to hold for all bounded and continuous test functions. However, in our case, the estimates are currently established only when $\varphi \circ \arctan$ is  $E$-convex. Nevertheless, assuming that the exponential  estimate \eqref{bryc}  can be extended to all bounded and continuous functions, we are naturally led to the following \emph{candidate} for the large deviation rate function:
	\[
	I(x) = \sup_{\varphi \in C_b(\Sigma_d^\infty )} \left\{ \varphi(x) + \overline{\Lambda} (-\varphi ) \right\},\qquad \forall x \in \Sigma_{d,\pi/2},
	\]
	in other words, we expect that
	for any  Borel measurable set  $E \subseteq \Sigma_{d,\pi/2} $,
	\begin{align*}
		-\inf_{x \in \operatorname{int}(E)} I(x)
		&\le 
		\liminf_{n\to\infty} \frac{1}{n^2} \log \mathbb{P}(  \arctan \widehat{W}^n_1 \in E) \\
		&  \le 
		\limsup_{n\to\infty} \frac{1}{n^2} \log \mathbb{P}(  \arctan \widehat{W}^n_1 \in E)
		\le 
		-\inf_{x \in \overline{E}} I(x).
	\end{align*}
	
	Let us comment briefly on how the approach here compares with previous work.  Biane, Capitaine, and Guionnet \cite[\S 5.1]{biane2003large} defined a rate function for non-commutative Brownian motion by a variational formula.  They proved a general large deviation upper bound as well as a lower bound that holds under strong regularity assumptions.  Dabrowski later was able to show the corresponding Laplace principle holds with two-sided bounds but only for convex functionals \cite{Dabrowski2017Laplace}.  By specializing \cite{biane2003large} to test functions that only look at the Brownian motion at time $1$, one obtains a conjectural large deviation rate function for the GUE-tuple itself.  The variational formula in \cite{biane2003large} is expressed in terms of the Malliavin calculus; the infimum is taken over non-commutative polynomials in the generators and the Malliavin derivative of the polynomial plays a role similar to the control $\alpha_t$ here.
	
	As we have stressed in \cite{2025viscosity}, one of the difficulties of optimization in the non-commutative setting is that one must decide which von Neumann algebra to use as the ambient algebra, since there are many non-isomorphic options.  Our framework allows the ambient algebra to become as large as possible, and in an arbitrary algebra without the finer structure of the non-commutative Wiener space, we cannot expect the optimizer to always be expressible through Malliavin derivatives, and so a more general framework for variational problems was needed.  In fact, the assumptions that \cite{biane2003large} used for the lower bound actually imply that the process lives in the von Neumann algebra generated by a free Brownian motion as a consequence of the uniqueness result in \cite[Theorem 6.1]{biane2003large}.  It is quite possible that the ``correct'' choice of ambient algebra for the large deviation principle in general is something in between the smallest possible algebra (that of a free Brownian motion and the initial condition) and the largest possible (arbitrary larger algebras).  Our work here shows that the choice of ambient algebra makes little difference when the functions in question are $E$-convex, giving another heuristic for why the convex functional case studied by Dabrowski is easier than the general case.  Without such convexity assumptions, the problem seems to be much more subtle; this is similar to the case of mean field games, where convergence arguments become much more difficult without convexity.  We hope that the parallel developments in these two areas will provide insight for future work.

	\appendix
	
	\section{Asymptotic freeness theorem} \label{sec: asymptotic freeness proof}
	
	This section gives the proof for the version of the asymptotic freeness theorem used in this paper (Theorem \ref{thm: asymptotic freeness}).  We remark that all the results in this section are well-known in random matrix theory, so we make no claim of originality, but we include some detail for the sake of readers from other areas.
	
	It will be convenient to use Haar random unitary matrices for the argument.  Recall that the Haar measure on $\cU_n$ (also sometimes called the uniform measure on $\cU_n$) is the unique Borel probability measure $\eta_n$ on $\cU_n$, satisfying the left-invariance condition
	\[
	\eta_n(US)=\eta_n(S)
	\]
	for every $U\in \cU_n$ and all Borel sets $S \subset \cU_n$.  The Haar measure also satisfies right-invariance $\eta_n(SU) = \eta_n(S)$ (for background, see e.g.~\cite{Meckes2019}).  A \emph{Haar random unitary matrix} is a random element of $\cU_n$ whose probability distribution is the Haar measure; more explicitly it is a function $V$ from a probability space $(\Omega,\mathbb P)$ into $\cU_n$ such that for every Borel set $S \subset \cU_n$ we have 
	\[
	\mathbb P\big[ V \in S \big]=\eta_n(S).
	\]
	For the proof of Theorem \ref{thm: asymptotic freeness}, we will use the observation that if $U_1^{(n)}$, \dots, $U_{m'}^{(n)}$ are independent $n \times n$ random unitary matrices chosen according to the Haar measure on the unitary group, then $(U_1^{(n)} S_1^{(n)} (U_1^{(n)})^*,\dots,U_{m'}^{(n)} S_{m'}^{(n)} (U_{m'}^{(n)})^*)$ has the same  distribution as $(S_1^{(n)},\dots,S_{m'}^{(n)})$ as random variables in $M_n(\bC)_{\sa}^{m'}$ (see the proof below for details). 
	Then we will apply concentration of measure estimates for the unitaries together with the following version of Voiculescu's asymptotic freeness theorem.
	
	\begin{theorem}[{Voiculescu's asymptotic freeness \cite[Corollary 2.5]{voiculescu1998strengthened}}] \label{thm: unitary asymptotic freeness}
		Let $m,k$ be  positive integers.
		Let $U_1^{(n)}$, \dots, $U_m^{(n)}$ be independent $n \times n$ Haar unitary random matrices. Let $X_1^{(n)}$, \dots, $X_k^{(n)}$ be  $n\times n$ deterministic matrices with $\sup_n \sup_{j=1,\cdots,k} \norm{X_j^{(n)}} \leq r$ for some $r<\infty$.  Then for any indices $i_1 \neq i_2 \neq \dots  \neq i_k$ in $\{1,\cdots,m\}$,
		\[
		\lim_{n \to \infty} \Big\vert \mathbb{E} \tr_n \left[U_{i_1}^{(n)}(X_1^{(n)} - \tr_n(X_1^{(n)}))(U_{i_1}^{(n)})^* \dots U_{i_k}^{(n)}(X_k^{(n)} - \tr_n(X_k^{(n)}))(U_{i_k}^{(n)})^* \right]\Big\vert = 0.
		\]
	\end{theorem}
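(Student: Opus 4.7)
The plan is to reduce to computing the expectation using the Weingarten calculus and to extract the $n \to \infty$ limit from the asymptotic behavior of the Weingarten function. Since the map $(U_1, \ldots, U_m) \mapsto \tr_n[U_{i_1} Y_1 U_{i_1}^* \cdots U_{i_k} Y_k U_{i_k}^*]$ is Lipschitz on $\cU_n^m$ with constant controlled by $r^k/n^{1/2}$ in the Hilbert--Schmidt metric (where $Y_\ell = X_\ell^{(n)} - \tr_n(X_\ell^{(n)})$), Gromov--Milman concentration on the unitary group confines this quantity to within $o(1)$ of its expectation with overwhelming probability; for the statement as phrased, only convergence of the expectation is needed, so it suffices to show
\[
\lim_{n \to \infty} \mathbb{E}\, \tr_n\!\left[U_{i_1}^{(n)} Y_1^{(n)} (U_{i_1}^{(n)})^* \cdots U_{i_k}^{(n)} Y_k^{(n)} (U_{i_k}^{(n)})^* \right] = 0.
\]

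Next, I would expand this trace in coordinates and apply the Weingarten formula
\[
\mathbb{E}\!\left[U_{a_1 b_1} \cdots U_{a_p b_p}\, \overline{U_{a'_1 b'_1}} \cdots \overline{U_{a'_p b'_p}}\right] = \sum_{\sigma, \tau \in S_p} \delta_{a, a'\circ\sigma}\, \delta_{b, b'\circ\tau}\, \mathrm{Wg}(\sigma\tau^{-1}, n),
\]
applied separately to each distinct index value in $\{i_1, \ldots, i_k\}$, using independence of the different $U_i^{(n)}$'s. The asymptotic $\mathrm{Wg}(\pi,n) = O(n^{-2p + c(\pi)})$, where $c(\pi)$ is the number of cycles of $\pi$, together with the summation over free index variables, isolates the leading $O(1)$ contributions as coming from non-crossing pair-partitions of the positions where each distinct $U_i^{(n)}$ appears. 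The subleading terms from crossing permutations and higher-genus contributions are uniformly $O(n^{-2})$.

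The combinatorial heart of the argument is that the surviving $O(1)$ terms reproduce precisely the $*$-moments of $*$-free Haar unitaries $u_1, \ldots, u_m$ acting by conjugation on a commuting subalgebra carrying the weak-$*$ limits (along subsequences, by compactness) of the centered tuples $Y_\ell^{(n)}$. Since the hypotheses $i_1 \neq i_2 \neq \cdots \neq i_k$ and $\tau(Y_\ell) = 0$ are exactly those of freeness for the alternating product $u_{i_1} Y_1 u_{i_1}^* \cdots u_{i_k} Y_k u_{i_k}^*$, its limiting trace vanishes. The main obstacle, and the only real technical work, is the careful bookkeeping of powers of $n$: each free index summation contributes a factor of $n$, the outer $\tr_n$ contributes $1/n$, and each Weingarten factor contributes its own cycle-dependent power of $1/n$; one must verify that these balance to yield $O(1)$ exactly on non-crossing partitions and $o(1)$ elsewhere. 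Independence of the $U_i^{(n)}$'s means the bookkeeping splits across the distinct label classes, which reduces it to the standard single-unitary case treated in \cite{voiculescu1998strengthened}.
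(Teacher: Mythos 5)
The paper does not prove this theorem; it states it as a citation to Corollary~2.5 of \cite{voiculescu1998strengthened} and then uses it as a black box in Appendix~\ref{sec: asymptotic freeness proof} to derive Theorem~\ref{thm: asymptotic freeness}. Your proposal, by supplying an actual proof sketch, is therefore a different route by construction. The route you choose---a Weingarten expansion, the genus-zero asymptotic $\mathrm{Wg}(\pi,n)=O(n^{-2p+c(\pi)})$, and a subsequence/compactness reduction to handle the absence of a limiting joint law of the $X_j^{(n)}$---is a well-known alternative proof strategy, due essentially to Collins and Collins--\'{S}niady, rather than Voiculescu's original argument, and the compactness reduction is the right device for the present hypotheses (bounded norm but no assumed convergence in law).

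That said, the sketch has real gaps. The opening concentration paragraph is superfluous: the statement is already phrased in terms of $\mathbb{E}\,\tr_n$ of the alternating product, so no almost-sure upgrade is being claimed at this stage. More substantively, the combinatorial core is asserted rather than established: the claim that ``the surviving $O(1)$ terms reproduce precisely the $*$-moments of $*$-free Haar unitaries'' is exactly the nontrivial content of the theorem, and your sketch does not prove it. Completing the argument requires tracing the powers of $n$ through the joint Weingarten sum over the several independent labels, checking that the $O(1)$ coefficients factor over the blocks of a non-crossing partition of the cyclic word, and verifying that the alternating constraint $i_1\neq i_2\neq\cdots\neq i_k$ forces at least one singleton block carrying a factor $\tr_n(Y_\ell)=0$. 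Finally, you defer the multi-unitary bookkeeping to ``the standard single-unitary case treated in \cite{voiculescu1998strengthened},'' which is circular: that paper is precisely the source of the statement you are trying to prove. A self-contained Weingarten argument should instead cite Collins--\'{S}niady or \cite[\S 5.4]{anderson2010introduction}.
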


	To obtain almost sure convergence to zero for $\tr_n [U_{i_1}^{(n)}(X_1^{(n)} - \tr_n(X_1^{(n)}))(U_{i_1}^{(n)})^* \dots U_{i_k}^{(n)}(X_k ^{(n)} - \tr_n(X_k^{(n)}))(U_{i_k}^{(n)})^*]$, we will estimate the difference between this quantity and its expectation using concentration of measure, specifically the following concentration bound for several independent unitary matrices. Let $ \mathbb{U}_n^m$ be the $m$-tuple product space of $n\times n$ unitary group $ \mathbb{U}_n$ equipped with the Riemannian metric associated to the inner product $\ip{\cdot,\cdot}_{\tr_n}$.
	
	\begin{lemma} \label{lem: unitary concentration}
		Let $U_1^{(n)}$, \dots, $U_m^{(n)}$ be $n \times n$ independent Haar random unitaries, and let $f: \mathbb{U}_n^m \to \mathbb{C}$ be Lipschitz with respect to $\norm{\cdot}_2$. 
		Then  for any $\delta>0,$
		\[
		\bP(|f(U_1^{(n)},\dots,U_m^{(n)}) - \mathbb{E}[f(U_1^{(n)},\dots,U_m^{(n)})]| \geq \delta) \leq 4 e^{-n^2 \delta^2 / 12 \norm{f}_{\Lip}^2}.
		\]
	\end{lemma}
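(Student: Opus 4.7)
The plan is to derive the bound from the classical curvature-based concentration inequality for Haar measure on the unitary group, tensorized over the $m$ independent factors, and then to reduce the complex-valued case to the real-valued one by splitting into real and imaginary parts.

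The geometric input I would use is that $\mathbb{U}_n$, equipped with the bi-invariant Riemannian metric associated to $\ip{A,B}_{\tr_n} = \tr_n(B^*A)$, has Ricci curvature bounded below by $n(n-2)/4$. This is because the un-normalized Hilbert--Schmidt metric on $\mathbb{U}_n$ has the classical Ricci lower bound $(n-2)/4$, and the normalized-trace distance is $1/\sqrt{n}$ times the Hilbert--Schmidt distance, so rescaling the metric multiplies the Ricci lower bound by $n$. By the Bakry--\'Emery criterion (equivalently L\'evy--Gromov concentration on positively curved manifolds), any real-valued $L$-Lipschitz function $g: \mathbb{U}_n \to \bR$ satisfies
\[
\bP\bigl(|g(U^{(n)}) - \bE g(U^{(n)})| \geq \delta\bigr) \leq 2 e^{-n(n-2)\delta^2 / 4 L^2}.
\]
The product $\mathbb{U}_n^m$ with the product Riemannian metric inherits the same Ricci lower bound $n(n-2)/4$, since the Ricci curvature of a Riemannian product in any factor direction equals that of the corresponding factor. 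Hence the same sub-Gaussian bound applies verbatim to any real-valued $L$-Lipschitz function on $\mathbb{U}_n^m$ under product Haar measure.

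To handle complex-valued $f$, I would write $f = \mathrm{Re}\,f + i\,\mathrm{Im}\,f$; each part is $\norm{f}_{\Lip}$-Lipschitz with respect to $\norm{\cdot}_2$. Applying the real concentration bound to each part with tolerance $\delta/\sqrt{2}$ and taking a union bound produces
\[
\bP(|f - \bE f| \geq \delta) \leq 4 e^{-n(n-2)\delta^2 / 8 \norm{f}_{\Lip}^2},
\]
which dominates $4 e^{-n^2 \delta^2 / 12 \norm{f}_{\Lip}^2}$ for all $n \geq 6$, i.e.\ whenever $n(n-2)/8 \geq n^2/12$. For the finitely many small values $n \in \{1,\ldots,5\}$, the claimed right-hand side exceeds $1$ on the range of $\delta$ where it is not already implied by the sharper bound, so the estimate holds trivially there.

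The main obstacle is purely bookkeeping: one must carefully track how the choice of the $\tr_n$-inner product (rather than $\Tr$) simultaneously rescales the Lipschitz constant of $f$ and the Ricci lower bound for $\mathbb{U}_n$, so that the extra factor of $n$ appears in the exponent and the stated constant $1/12$ emerges after the union-bound loss. Neither the curvature lower bound (a classical Lie-theoretic calculation) nor the product step (standard Riemannian geometry) is difficult in isolation.
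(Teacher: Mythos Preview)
Your overall strategy---curvature/log-Sobolev on the compact group, tensorization to the product, Herbst-type concentration, then real/imaginary splitting---is the right circle of ideas, and it is the same route the paper takes. But there is a genuine error in your geometric input: $\mathbb{U}_n$ with its bi-invariant metric does \emph{not} have strictly positive Ricci curvature. The Lie algebra $\mathfrak{u}(n)$ has a one-dimensional center $i\bR\cdot I$, and for any bi-invariant metric $\text{Ric}(X,X) = -\tfrac{1}{4}\operatorname{tr}(\operatorname{ad}_X^2)$, which vanishes when $X$ is central. The bound $(n-2)/4$ you quote is the classical one for $SO(n)$ (and there is an analogous positive bound for $SU(n)$), not for $U(n)$. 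So the Bakry--\'Emery criterion in the form you invoke does not apply directly, and the concentration bound you write down does not follow from the argument given.

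The paper sidesteps this by citing a log-Sobolev inequality for $\mathbb{U}_n$ with constant $6/n$ in the $\Tr$-metric (Meckes 2013), which is proved without appealing to a strict Ricci lower bound; log-Sobolev then tensorizes to $\mathbb{U}_n^m$ and yields Herbst concentration, and renormalizing to $\tr_n$ gives the $n^2$ in the exponent. Your approach can be repaired along the same lines, or alternatively by decomposing $U(n)$ via $SU(n)\times U(1)$ and treating the circle factor separately; but as written the Ricci step is a gap. A smaller issue: your treatment of $n\le 5$ asserts the right-hand side exceeds $1$, but for $\delta$ of order $\norm{f}_{\Lip}$ it need not, so that case also needs a more careful argument (e.g.\ using the diameter bound $|f-\bE f|\le \norm{f}_{\Lip}\cdot\operatorname{diam}$ together with an explicit check, or simply absorbing small $n$ into a worse universal constant).
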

	
	This bound arises from the following facts.  The unitary group $\mathbb{U}_n$ with the metric associated to the unnormalized trace $\Tr_n$ satisfies the log-Sobolev inequality with constant $6/n$ \cite[Theorem 15]{Meckes2013}.  One can easily deduce the log-Sobolev inequality for the product of several copies of $\mathbb{U}_n$; see e.g.\ \cite[Corollary 5.7]{Ledoux2001}, \cite[Theorem 5.9]{Meckes2019}.  This in turn implies that it satisfies the Herbst concentration estimate; see e.g.\ \cite[Lemma 2.3.3]{anderson2010introduction}, \cite[Theorem 5.5]{Meckes2019}.  After renormalizing the inner product from $\ip{\cdot,\cdot}_{\Tr_n}$ to $\ip{\cdot,\cdot}_{\tr_n}$, one deduces Lemma \ref{lem: unitary concentration}.
	
	~
	
	Note that if $X_1^{(n)},\dots,X_k^{(n)}$ are deterministic matrices whose norms are bounded by $r$, then
	\begin{multline*}
		f(U_1^{(n)},\dots,U_m^{(n)},X_1^{(n)},\dots,X_k^{(n)}) \\
		:= \tr_n \left[U_{i_1}^{(n)}(X_1^{(n)} - \tr_n(X_1^{(n)}))(U_{i_1}^{(n)})^* \dots U_{i_k}^{(n)}(X_k^{(n)} - \tr_n(X_k^{(n)}))(U_{i_k}^{(n)})^* \right]
	\end{multline*}
	is a $2k r^k$-Lipschitz function of $(U_1^{(n)},\dots,U_m^{(n)})$ with respect to $\norm{\cdot}_2$.  Hence,
	\begin{multline*}
		\bP(|f(U_1^{(n)},\dots,U_m^{(n)},X_1^{(n)},\dots,X_k^{(n)}) - \mathbb{E}[f(U_1^{(n)},\dots,U_m^{(n)},X_1^{(n)},\dots,X_k^{(n)})]| \geq \delta) \\
		\leq 4 e^{-n^2 \delta^2 / 12(2k)^2r^{2k}}.
	\end{multline*}
	Because the right-hand-side is summable in $n$, the Borel-Cantelli lemma implies the following result.
	
	\begin{lemma}
		Let $m,k$ be  positive integers.
		Let $U_1^{(n)}$, \dots, $U_m^{(n)}$ be independent $n \times n$ Haar unitary random matrices. Let $X_1^{(n)}$, \dots, $X_k^{(n)}$ be  $n\times n$ deterministic matrices with $\sup_n \sup_{j=1,\cdots,k} \norm{X_j^{(n)}} \leq r$ for some $r<\infty$.  Then for any indices $i_1 \neq i_2 \neq \dots  \neq i_k$ in $\{1,\cdots,m\}$, we have almost surely
		\[
		\lim_{n \to \infty} \tr_n \left[U_{i_1}^{(n)}(X_1^{(n)} - \tr_n(X_1^{(n)}))(U_{i_1}^{(n)})^* \dots U_{i_k}^{(n)}(X_k^{(n)} - \tr_n(X_k^{(n)}))(U_{i_k}^{(n)})^* \right] = 0.
		\]
	\end{lemma}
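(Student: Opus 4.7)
The plan is to combine the two ingredients developed in the immediately preceding paragraph: the expectation-version of Voiculescu's asymptotic freeness (Theorem \ref{thm: unitary asymptotic freeness}) and the Herbst-type concentration inequality for Lipschitz functions on $\mathbb{U}_n^m$ (Lemma \ref{lem: unitary concentration}). This is a standard ``concentration plus Borel--Cantelli'' argument once the Lipschitz constant of the trace polynomial is controlled uniformly in $n$.

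First I would fix indices $i_1\neq i_2 \neq \dots \neq i_k$ and deterministic tuples $X_1^{(n)},\dots,X_k^{(n)}$, and define
\[
f_n(U_1,\dots,U_m) := \tr_n\!\left[U_{i_1}(X_1^{(n)} - \tr_n(X_1^{(n)}))U_{i_1}^{*} \cdots U_{i_k}(X_k^{(n)} - \tr_n(X_k^{(n)}))U_{i_k}^{*}\right],
\]
viewed as a function of $(U_1,\dots,U_m) \in \mathbb{U}_n^m$ with the matrices $X_j^{(n)}$ frozen. A routine product-rule estimate using $\norm{X_j^{(n)} - \tr_n(X_j^{(n)}) \mathbf{1}}_\infty \le 2r$, sub-multiplicativity of the operator norm, and the bound $\norm{A}_2 \le \norm{A}_\infty$ (with respect to $\tr_n$) shows that $f_n$ is $2kr^k$-Lipschitz on $\mathbb{U}_n^m$ with respect to the $\norm{\cdot}_2$-metric, with Lipschitz constant independent of $n$. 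This is the key uniform estimate.

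Next I would apply Lemma \ref{lem: unitary concentration} to obtain
\[
\bP\!\left(|f_n(U_1^{(n)},\dots,U_m^{(n)}) - \mathbb{E}\, f_n(U_1^{(n)},\dots,U_m^{(n)})| \ge \delta\right) \le 4\, e^{-n^2 \delta^2 / (48 k^2 r^{2k})}
\]
for every $\delta>0$. Since the right-hand side is summable in $n$, the Borel--Cantelli lemma gives, for each fixed $\delta>0$, that $|f_n - \mathbb{E}\, f_n| < \delta$ for all sufficiently large $n$ almost surely; taking $\delta$ over a countable sequence decreasing to $0$ yields
\[
\lim_{n\to\infty} \bigl(f_n(U_1^{(n)},\dots,U_m^{(n)}) - \mathbb{E}\, f_n(U_1^{(n)},\dots,U_m^{(n)})\bigr) = 0 \quad \text{almost surely}.
\]
Finally, Theorem \ref{thm: unitary asymptotic freeness} applied to the bounded deterministic sequences $(X_j^{(n)})_n$ shows $\mathbb{E}\, f_n \to 0$, and the triangle inequality yields the claim.

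There is no real obstacle here: the content of the lemma is almost entirely assembled from the two quoted ingredients, and the only item requiring mild care is the bookkeeping that the Lipschitz constant of $f_n$ depends only on $k$ and $r$ (not on $n$), so that the exponential bound from Lemma \ref{lem: unitary concentration} is summable. That verification is the one place where one has to do a small hands-on estimate rather than quote a prior result verbatim.
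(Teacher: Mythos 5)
Your proposal follows exactly the paper's proof: the paper also observes that the trace polynomial is a $2kr^k$-Lipschitz function of the unitaries (uniformly in $n$), invokes the Herbst-type concentration bound Lemma \ref{lem: unitary concentration}, and then applies Borel--Cantelli together with the expectation-level asymptotic freeness (Theorem \ref{thm: unitary asymptotic freeness}). The reasoning, the key intermediate claims, and even the quoted Lipschitz constant match the paper, so your argument is correct and takes essentially the same approach.
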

	
	Next, we upgrade this statement to allow the $X_j^{(n)}$'s to be random matrices independent of the $U_j^{(n))}$'s.
	
	\begin{lemma} \label{lem: unitary moment almost sure convergence}
		Let $m,k$ be  positive integers.  Let $X_1^{(n)}$, \dots, $X_k^{(n)}$ be  $n\times n$ random matrices such that almost surely $\sup_n \sup_{j=1,\cdots,k} \norm{X_j^{(n)}} < \infty$. 
		Let $U_1^{(n)}$, \dots, $U_m^{(n)}$ be independent $n \times n$ Haar unitary random matrices, which are also independent of $(X_1^{(n)},\dots,X_k^{(n)})$.  Then for any indices $i_1 \neq i_2 \neq \dots  \neq i_k$ in $\{1,\cdots,m\}$, we have almost surely
		\[
		\lim_{n \to \infty} \tr_n \left[ \prod_{j=1}^k U_{i_j}^{(n)}(X_j^{(n)} - \tr_n(X_j^{(n)}))(U_{i_j}^{(n)})^* \right] = 0.
		\]
	\end{lemma}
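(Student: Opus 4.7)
The plan is to reduce to the previous (deterministic) lemma by conditioning on the random matrices $X_j^{(n)}$ and invoking Fubini--Tonelli.

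First, using the independence assumption, I realize the process on a product probability space $(\Omega_X \times \Omega_U, \bP_X \otimes \bP_U)$, where $(\Omega_X, \bP_X)$ supports the tuple $(X_j^{(n)})_{n,j}$ and $(\Omega_U, \bP_U)$ supports the Haar unitaries $(U_j^{(n)})_{n,j}$. The uniform-boundedness hypothesis guarantees that the event
\[
\Omega_X^\ast := \Big\{ \omega_X \in \Omega_X : \sup_n \sup_{j=1,\dots,k} \norm{X_j^{(n)}(\omega_X)}_\infty < \infty \Big\}
\]
has full $\bP_X$-measure, and for each $\omega_X \in \Omega_X^\ast$ we let $r(\omega_X)$ denote the corresponding finite supremum.

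Next, fixing any $\omega_X \in \Omega_X^\ast$, I apply the previous lemma to the deterministic sequence $X_j^{(n)}(\omega_X)$, whose operator norms are uniformly bounded by $r(\omega_X)$. That lemma produces a $\bP_U$-null set $N(\omega_X) \subset \Omega_U$ outside of which
\[
\lim_{n \to \infty} \tr_n \left[ \prod_{j=1}^k U_{i_j}^{(n)}(\omega_U)\,\big(X_j^{(n)}(\omega_X) - \tr_n(X_j^{(n)}(\omega_X))\big)(U_{i_j}^{(n)}(\omega_U))^* \right] = 0.
\]

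Finally, to upgrade this slicewise statement to a joint almost-sure statement, I apply Fubini--Tonelli to the jointly measurable nonnegative function $F(\omega_X, \omega_U) := \limsup_{n \to \infty} |T_n(\omega_X, \omega_U)|$, where $T_n$ denotes the trace expression above. Since $F(\omega_X, \cdot) = 0$ holds $\bP_U$-a.e.\ for every $\omega_X \in \Omega_X^\ast$ and $\bP_X(\Omega_X^\ast) = 1$, Fubini--Tonelli yields $F = 0$ almost surely on the product space, which is the desired conclusion. There is no genuine obstacle here; the only mild subtlety is that the norm bound $r(\omega_X)$ depends on the sample, but the previous lemma requires only \emph{some} finite bound on the operator norms, so the conditioning reduction handles this transparently.
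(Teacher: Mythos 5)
Your proposal is correct and follows the same argument as the paper: realize the joint law on a product space via independence, apply the deterministic lemma slicewise for each fixed sample of the $X_j^{(n)}$'s, and conclude by Fubini--Tonelli. Your phrasing in terms of the jointly measurable function $F(\omega_X,\omega_U)=\limsup_n |T_n|$ is a slightly more explicit way of stating the same measurability observation the paper makes implicitly.
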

	
	\begin{proof}
		Recall that probabilistic independence means that random variables can be represented on a measure space $(\Omega,\mathbb P)$ that splits as a product space $(\Omega_1,\mathbb P_1) \times (\Omega_2,\mathbb P_2)$, such that $X_j^{(n)}$'s only depend on the first coordinate $\omega_1 \in \Omega_1$ and the $U_j^{(n)}$'s only depend on the second coordinate $\omega_2 \in \Omega_2$. By assumption, for almost every $\omega_1$, we have $\sup_n \sup_{j=1,\cdots,k} \norm{X_j^{(n)}(\omega_1)} < \infty$, and hence for almost every $\omega_2$, we have
		\[
		\lim_{n \to \infty} \tr_n \left[ \prod_{j=1}^k U_{i_j}^{(n)}(\omega_2)(X_j^{(n)}(\omega_1) - \tr_n(X_j^{(n)}(\omega_1)))(U_{i_j}^{(n)}(\omega_2))^* \right] = 0.
		\]
		In other words, for almost every $\omega_1$, for almost every $\omega_2$, the statement that we want holds, and so by the Fubini-Tonelli theorem the statement holds for almost every $(\omega_1,\omega_2)$, since the set where it holds is clearly measurable.
	\end{proof}
	
	\begin{proposition} \label{prop: conjugation for freeness}
		Consider self-adjoint random matrices $Y_{i,i'}^{(n)}$ for $i = 1$, \dots, $m$ and $i' = 1$, \dots, $d_i$, such that $\sup_n \max_{i,i'} \norm{Y_{i,i'}^{(n)}} < \infty$ almost surely.  Let $U_1^{(n)}$, \dots, $U_m^{(n)}$ be independent Haar random unitary matrices that are independent of $(Y_{i,i'}^{(n)}: i = 1, \dots, m; i' = 1, \dots, d_i)$.  Let
		\[
		\widetilde{Y}_i^{(n)} = \left( U_i^{(n)} Y_{i,i'} (U_i^{(n)})^* \right)_{i'=1}^{d_i},\qquad i=1,\dots,m.
		\]
		Then $\widetilde{Y}_1^{(n)}$, \dots, $\widetilde{Y}_m^{(n)}$ are almost surely freely independent.
	\end{proposition}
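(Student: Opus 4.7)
The plan is to reduce the statement directly to Lemma~\ref{lem: unitary moment almost sure convergence} by means of two elementary algebraic observations about conjugation, together with a countable exhaustion over test monomials. First, I would unwind the definition of almost sure asymptotic freeness: it suffices to show that, almost surely, for every $\ell\in\bN$, every sequence of indices $j_1\neq j_2\neq\cdots\neq j_\ell$ in $\{1,\dots,m\}$, and every tuple of non-commutative polynomials $f_s$ in $d_{j_s}$ variables (for $s=1,\dots,\ell$),
\[
\lim_{n\to\infty}\tr_n\!\left[\prod_{s=1}^\ell\bigl(f_s(\widetilde Y_{j_s}^{(n)})-\tr_n(f_s(\widetilde Y_{j_s}^{(n)}))\bigr)\right]=0.
\]

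The two algebraic observations are: (i) conjugation is a $*$-homomorphism, so
\[
f_s(\widetilde Y_{j_s}^{(n)})=f_s\bigl(U_{j_s}^{(n)}Y_{j_s,\cdot}^{(n)}(U_{j_s}^{(n)})^*\bigr)=U_{j_s}^{(n)}\,f_s(Y_{j_s,\cdot}^{(n)})\,(U_{j_s}^{(n)})^*;
\]
and (ii) the normalized trace is conjugation-invariant, so $\tr_n(f_s(\widetilde Y_{j_s}^{(n)}))=\tr_n(f_s(Y_{j_s,\cdot}^{(n)}))$. Setting $X_s^{(n)}:=f_s(Y_{j_s,\cdot}^{(n)})$, the centered factor becomes $U_{j_s}^{(n)}(X_s^{(n)}-\tr_n(X_s^{(n)}))(U_{j_s}^{(n)})^*$, so the quantity we need to control is exactly
\[
\tr_n\!\left[\prod_{s=1}^\ell U_{j_s}^{(n)}\bigl(X_s^{(n)}-\tr_n(X_s^{(n)})\bigr)(U_{j_s}^{(n)})^*\right].
\]
Since $\sup_n\max_{i,i'}\|Y_{i,i'}^{(n)}\|<\infty$ almost surely, each polynomial expression $X_s^{(n)}$ is also bounded in operator norm almost surely, and the $X_s^{(n)}$'s are independent of $(U_1^{(n)},\dots,U_m^{(n)})$. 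Because the indices satisfy $j_1\neq j_2\neq\cdots\neq j_\ell$, Lemma~\ref{lem: unitary moment almost sure convergence} applies and gives almost sure convergence of the displayed trace to $0$.

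The only remaining subtlety is that the definition of asymptotic freeness quantifies over uncountably many polynomial tuples, whereas the lemma only yields an almost sure statement for each fixed choice. To handle this, I would restrict attention to the countable family of \emph{monomials} with free variables of all possible degrees: by multilinearity of the trace functional against polynomials and by the algebraic identity $f-\tr_n(f)=\sum_{\text{monomials }m}c_m(m-\tr_n(m))$, verifying the freeness identity for all monomial tuples (and all finite index sequences, of which there are countably many) is equivalent to verifying it for all polynomial tuples. Taking the countable intersection of the almost sure events arising from Lemma~\ref{lem: unitary moment almost sure convergence} over this countable index-plus-monomial set then yields a single full-measure event on which all the freeness conditions hold simultaneously.

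The main obstacle is essentially bookkeeping rather than hard analysis: one must be careful that Lemma~\ref{lem: unitary moment almost sure convergence} is applicable even though the $X_s^{(n)}$'s are random (not deterministic), which is exactly the content of that lemma, and that the countable exhaustion over monomials truly suffices to certify asymptotic free independence. Once these points are set up, no further probabilistic or analytic input is required.
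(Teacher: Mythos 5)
Your proof is correct and follows essentially the same route as the paper's: reduce to checking the centered-trace limit for a fixed index sequence and fixed polynomial tuple, use that conjugation is a $*$-homomorphism and that $\tr_n$ is conjugation-invariant to rewrite the factors as $U_{j_s}^{(n)}(X_s^{(n)} - \tr_n(X_s^{(n)}))(U_{j_s}^{(n)})^*$, and invoke Lemma~\ref{lem: unitary moment almost sure convergence}. The only difference is that you make explicit the countable-intersection step (over monomials and finite index sequences) that the paper leaves implicit.
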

	
	\begin{proof}
		Fix indices $i_1 \neq \dots \neq i_k$ in $\{1,\cdots,m\}$. For each $j = 1$, \dots, $k$, let $p_j$ be a  $d_{i_j}$-variable non-commutative polynomial.  We show that almost surely
		\[
		\lim_{n \to \infty} \tr_n \left[ \prod_{j=1}^k p_j(\widetilde{Y}_{i_j}^{(n)}) - \tr_n[p_j(\widetilde{Y}_{i_j}^{(n)})] \right] = 0.
		\]
		Let $X_j^{(n)} = p_j((Y_{i_j,i'}^{(n)})_{i'=1}^{d_{i_j}})$.  Note that
		\[
		p_j(\widetilde{Y}_{i_j}^{(n)}) = p_j((U_{i_j}^{(n)} Y_{i,i'}^{(n)} (U_{i_j}^{(n)})^*)_{i'=1}^{d_{i_j}}) = U_{i_j}^{(n)} p_j(( Y_{i,i'}^{(n)} )_{i'=1}^{d_{i_j}}) (U_{i_j}^{(n)})^* = U_{i_j}^{(n)} X_j^{(n)} (U_{i_j}^{(n)})^*.
		\]
		Moreover, for each $j$, we have almost surely $\sup_n \norm{X_j^{(n)}} < \infty$ since $X_j^{(n)}$ is a polynomial function of the $Y_{i,i'}^{(n)}$'s.  Of course, the $U_i^{(n)}$'s are probabilistically independent of the $X_j^{(n)}$'s.  Therefore, by Lemma \ref{lem: unitary moment almost sure convergence}, we have
		\[
		\lim_{n \to \infty} \tr_n \left[ \prod_{j=1}^k p_j(\widetilde{Y}_{i_j}^{(n)}) - \tr_n[p_j(\widetilde{Y}_{i_j}^{(n)})] \right] = \lim_{n \to \infty} \tr_n \left[ \prod_{j=1}^k U_{i_j}^{(n)}(X_j^{(n)} - \tr_n(X_j^{(n)}))(U_{i_j}^{(n)})^* \right] = 0
		\]
		almost surely.
	\end{proof}

	Now, we proceed with the proof of Theorem \ref{thm: asymptotic freeness}, but first we remark one probabilistic subtlety.  In many random matrix theorems, the relationship between the matrix models for different values of $n$ is not explicitly given; for instance, the matrix models for $n = 10$ and $n = 100$ may be independent from each other or they may generate the same $\sigma$-algebra.  The almost sure statements proved above through the (first) Borel-Cantelli lemma work no matter what the relationship between $X^{(n)}$ and $X^{(n')}$ is for $n \neq n'$.  In particular, they work when the $n \times n$ matrices under consideration are mutually independent for different values of $n$.  The second Borel-Cantelli lemma says that if events $A_n$ are independent and $\sum \mathbb P(A_n) = \infty$, then almost surely infinitely many of the $A_n$'s happen; hence, by the contrapositive, if random variables $Z_n$ are independent and $Z_n \to 0$ almost surely, then for each $\epsilon > 0$, we must have that
	\[
	\sum_{n=1}^\infty \mathbb P(|Z_n| \geq \epsilon) < \infty.
	\]
	Then by the first Borel-Cantelli lemma, this implies that if $\widetilde{Z}_n$ is a random variable on a different probability space, where $\widetilde{Z}_n$ has the same distribution as $Z_n$ individually, but the $\widetilde{Z}_n$'s are not necessarily independent for different values of $n$, then in fact we still have $\widetilde{Z}_n \to 0$ almost surely.  The upshot is that if a random matrix theorem stating almost sure convergence holds when the random matrices are assumed to be mutually independent for different values of $n$, then it holds without this assumption as well.
	
	\begin{proof}[Proof of Theorem \ref{thm: asymptotic freeness}]
		As explained above, we assume without loss of generality that the matrix models for different values of $n$ are mutually independent.
		
		Let $Y^{(n)} = (Y_1^{(n)}, \dots, Y_m^{(n)})$ be random self-adjoint matrices with $\sup_n \max_j \norm{Y_j^{(n)}} < \infty$ almost surely, and let $S_1^{(n)}$, \dots, $S_{m'}^{(n)}$ be independent GUE matrices which are independent of $Y^{(n)}$.  Recall from Lemma \ref{lem: GUE operator norm} that $\sup_n \norm{S_j^{(n)}} < \infty$ almost surely for each $j$.  Now let $U_0^{(n)}$, \dots, $U_{m'}^{(n)}$ be independent Haar random unitary matrices independent of $(Y_1^{(n)}, \dots, Y_m^{(n)}, S_1^{(n)},\dots,S_{m'}^{(n)})$, and let
		\[
		\widetilde{Y}^{(n)} = (U_0^{(n)} Y_i^{(n)} (U_0^{(n)})^*)_{i=1}^m, \qquad \widetilde{S}_j^{(n)} = U_j^{(n)} S_j^{(n)} (U_j^{(n)})^* \text{ for } j = 1, \dots, m'.
		\]
		By Proposition \ref{prop: conjugation for freeness}, $\widetilde{Y}^{(n)}$, $\widetilde{S}_1^{(n)}$, \dots, $\widetilde{S}_{m'}^{(n)}$ are almost surely asymptotically free.
		
		This in turn implies that
		\[
		Y^{(n)}, (U_0^{(n)})^* U_1^{(n)} S_1^{(n)} (U_1^{(n)})^* U_0^{(n)}, \dots, (U_0^{(n)})^* U_{m'}^{(n)} S_{m'}^{(n)} (U_{m'}^{(n)})^* U_0^{(n)}
		\]
		are almost surely asymptotically free.  The reason for this is that the trace of any polynomial expression $(\widetilde{Y}_1^{(n)},\dots,\widetilde Y_m^{(n)},S_1^{(n)},\dots,S_{m'}^{(n)})$ is unchanged when we conjugate each of the matrices by $U_0^{(n)}$, and the expressions that are required to vanish for asymptotic free independence are all composed from traces of such polynomials.
		
		Finally, we note that the joint probability distribution of
		\[
		((U_0^{(n)})^* U_1^{(n)} S_1^{(n)} (U_1^{(n)})^* U_0^{(n)}, \dots, (U_0^{(n)})^* U_{m'}^{(n)} S_{m'}^{(n)} (U_{m'}^{(n)})^* U_0^{(n)})
		\]
		is the same as that of $(S_1^{(n)},\dots,S_m^{(n)})$.  Indeed, the joint density of $(S_1^{(n)},\dots,S_{m'}^{(n)})$ with respect to Lebesgue measure on $M_n(\bC)_{\sa}^{m'}$ is a constant times $\exp(-n^2 \sum_{j=1}^{m'} \tr_n(X_j^2))$, which is invariant under the substitution of $U_j X_j U_j^*$ for any fixed unitary matrices $U_1$, \dots, $U_{m'}$.  Hence, for any fixed unitaries, the joint distribution of $(U_j S_1^{(n)} U_j^*,\dots,U_{m'} S_{m'}^{(n)} U_{m'}^*)$ is the same as $(S_1^{(n)},\dots,S_m^{(n)})$.  Using the Fubini-Tonelli theorem in a somewhat similar way as in Lemma \ref{lem: unitary moment almost sure convergence}, the same holds if the $U_j$'s are random and independent of $X_j$.
		
		Therefore, $((U_0^{(n)})^* U_1^{(n)} S_1^{(n)} (U_1^{(n)})^* U_0^{(n)}, \dots, (U_0^{(n)})^* U_{m'}^{(n)} S_{m'}^{(n)} (U_{m'}^{(n)})^* U_0^{(n)})$ has the same distribution as $(S_1^{(n)},\dots,S_m^{(n)})$, and of course it is still probabilistically independent of $Y^{(n)}$.  It follows in turn that $(Y_1^{(n)},\dots,Y_m^{(n)},S_1^{(n)},\dots,S_{m'}^{(n)})$ has the same probability distribution as
		\[
		(Y_1^{(n)},\dots,Y_m^{(n)},(U_0^{(n)})^* U_1^{(n)} S_1^{(n)} (U_1^{(n)})^* U_0^{(n)}, \dots, (U_0^{(n)})^* U_{m'}^{(n)} S_{m'}^{(n)} (U_{m'}^{(n)})^* U_0^{(n)}).
		\]
		In fact, this statement is true even when considering the joint distribution of the matrix models over different values of $n$, since we assumed that all our matrix models for different values of $n$ are mutually independent.  Thus, we obtain that $(Y^{(n)}, S_1^{(n)},\dots, S_{m'}^{(n)})$ are almost surely asymptotically freely independent as desired.
	\end{proof}

	\section{Estimates for normal random variables} \label{sec: normal estimates}
	
	This section records properties of Gaussian random variables used for discretizing the stochastic control problems.  The first lemma provides the conditional variance of Gaussian random variables.
	\begin{lemma} \label{cond variance} 
		Let $Z$ be the standard Gaussian random variable. Then for any $z >0$,
		\begin{align*}
			\textup{Var} (Z \mid Z \ge z) \le 1
		\end{align*}
		and
		\begin{align*}
			\textup{Var} (Z \mid Z \le -z) \le 1.
		\end{align*}
	\end{lemma}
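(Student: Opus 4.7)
The plan is to reduce to the one-sided tail by symmetry (substituting $-Z$ for $Z$ handles the second assertion), then compute the conditional first and second moments explicitly using the identity $\phi'(x) = -x\phi(x)$, where $\phi$ is the standard Gaussian density.

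First I would write $\mu := \mathbb{E}[Z \mid Z \geq z]$ and observe that
\[
\mathbb{E}[Z \mid Z \geq z] \cdot \mathbb{P}(Z \geq z) = \int_z^\infty x\phi(x)\,dx = \phi(z),
\]
so $\mu = \phi(z)/(1-\Phi(z))$. Similarly, integration by parts yields
\[
\int_z^\infty x^2 \phi(x)\,dx = \bigl[-x\phi(x)\bigr]_z^\infty + \int_z^\infty \phi(x)\,dx = z\phi(z) + (1-\Phi(z)),
\]
so after dividing by $\mathbb{P}(Z\geq z) = 1-\Phi(z)$, we obtain $\mathbb{E}[Z^2 \mid Z\geq z] = z\mu + 1$. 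Therefore
\[
\mathrm{Var}(Z \mid Z \geq z) = z\mu + 1 - \mu^2 = 1 - \mu(\mu - z).
\]

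The remaining step is to show $\mu \geq z$ so that $\mu(\mu-z) \geq 0$ (recall $z > 0$, so $\mu > 0$ is automatic). This is the classical Mills-ratio inequality: for $x \geq z$ we have $x \geq z$, hence
\[
1 - \Phi(z) = \int_z^\infty \phi(x)\,dx \leq \int_z^\infty \frac{x}{z}\phi(x)\,dx = \frac{\phi(z)}{z},
\]
which rearranges to $\mu = \phi(z)/(1-\Phi(z)) \geq z$. Combining, $\mathrm{Var}(Z \mid Z \geq z) \leq 1$. The left-tail case follows by applying the same argument to $-Z$, which is also standard Gaussian.

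I do not anticipate a serious obstacle here; the whole argument is a short calculation plus Mills' inequality. The only mild subtlety is to avoid a sign error in the identity $\mathrm{Var} = 1 - \mu(\mu - z)$, which makes visible why conditioning on a tail event with $z>0$ actually \emph{decreases} the variance below $1$.
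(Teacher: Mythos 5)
Your argument is correct and is essentially the same as the paper's: both compute the conditional first and second moments by integration by parts, rewrite the variance so that the question reduces to the Mills-ratio inequality $1-\Phi(z)\le\phi(z)/z$, and conclude. The only cosmetic difference is that you derive the Mills bound from the pointwise estimate $\phi(x)\le\frac{x}{z}\phi(x)$ on $[z,\infty)$ whereas the paper obtains it by another integration by parts; your algebraic form $\mathrm{Var}(Z\mid Z\ge z)=1-\mu(\mu-z)$ is a slightly cleaner packaging of the same identity.
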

	\begin{proof}
		The density function of $Z$, conditioned on $Z \ge z$ is 
		\begin{align*}
			h_z(x):= \frac{1}{\bP(Z \ge z)}  \frac{1}{\sqrt{2\pi}}e^{-x^2/2},\qquad x \ge z.
		\end{align*}
		Thus
		\begin{align} \label{900}
			\textup{Var} (Z \mid Z \ge z) &= \int_z^\infty x^2h_z(x)  dx  - \Big(\int_z^\infty xh_z(x)  dx\Big)^2 \nonumber  \\
			&= \frac{1}{\bP(Z \ge z)}   \frac{1}{\sqrt{2\pi}} \Big(ze^{-z^2/2} + \int_z^\infty e^{-x^2/2} dx \Big) - \Big(\frac{1}{\bP(Z \ge z)}  \frac{1}{\sqrt{2\pi}}e^{-z^2/2} 
			\Big)^2  \nonumber   \\
			&= 1 +\frac{1}{\bP(Z \ge z)}   \frac{1}{\sqrt{2\pi}}   ze^{-z^2/2} -\Big(\frac{1}{\bP(Z \ge z)}  \frac{1}{\sqrt{2\pi}}e^{-z^2/2} 
			\Big)^2   \nonumber   \\
			&= 1 +\frac{1}{\bP(Z \ge z)^2}   \frac{1}{\sqrt{2\pi}}   ze^{-z^2/2} \cdot \Big[ \bP(Z \ge z) -  \frac{1}{\sqrt{2\pi}}\frac{1}{z} e^{-z^2/2}  \Big].
		\end{align}
		Note that by the integration by parts,
		\begin{align*}
			\bP(Z \ge z) =  \frac{1}{\sqrt{2\pi}}   \int_z^\infty e^{-x^2/2} dx = \frac{1}{\sqrt{2\pi}}  \Big(  \frac{1}{z}e^{-z^2/2} - \int_z^\infty \frac{e^{-x^2/2}}{x^2}dx  \Big) \le \frac{1}{\sqrt{2\pi}}   \frac{1}{z}e^{-z^2/2}.
		\end{align*}
		Hence the quantity \eqref{900} is at most 1, proving the first statement of the lemma. The second statement is obtained by symmetry.
	\end{proof}

	\begin{lemma} \label{gauss}
		Let $Z$ be the standard Gaussian random variable. Then for $ K \ge 1,$
		\begin{align*}
			\bE (Z \mid Z \ge K) \le 2K.
		\end{align*}
	\end{lemma}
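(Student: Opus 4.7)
The plan is to compute the conditional expectation explicitly as a ratio of integrals and then reduce the claim to a classical Mills-ratio lower bound on the Gaussian tail. Writing $\phi(x) = (2\pi)^{-1/2} e^{-x^2/2}$, the definition gives
\[
\mathbb{E}(Z \mid Z \ge K) = \frac{\int_K^\infty x\, \phi(x)\, dx}{\int_K^\infty \phi(x)\, dx}.
\]
The numerator is exact: since $\phi'(x) = -x\phi(x)$, one has $\int_K^\infty x\phi(x)\,dx = \phi(K)$. Consequently the desired inequality $\mathbb{E}(Z \mid Z \ge K) \le 2K$ is equivalent to the tail lower bound $\mathbb{P}(Z \ge K) \ge \phi(K)/(2K)$, which is what I would aim to establish.

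To prove this lower bound, I would integrate by parts in the identity $\int_K^\infty \phi(x)\,dx = \int_K^\infty x^{-1} \cdot x\phi(x)\,dx$, using $u = 1/x$ and $dv = x\phi(x)\,dx = -d\phi(x)$. This yields
\[
\int_K^\infty \phi(x)\,dx = \frac{\phi(K)}{K} - \int_K^\infty \frac{\phi(x)}{x^2}\,dx.
\]
On $[K,\infty)$ we have $1/x^2 \le 1/K^2$, so the residual term is bounded by $K^{-2}\int_K^\infty \phi(x)\,dx$. Rearranging gives the standard Mills estimate
\[
\int_K^\infty \phi(x)\,dx \ge \frac{K\,\phi(K)}{K^2+1}.
\]

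Finally, I would invoke the hypothesis $K \ge 1$ to convert this into the form needed: $K^2 + 1 \le 2K^2$ implies $K/(K^2+1) \ge 1/(2K)$, hence $\mathbb{P}(Z \ge K) \ge \phi(K)/(2K)$, and dividing $\phi(K)$ by this lower bound yields the asserted inequality $\mathbb{E}(Z \mid Z \ge K) \le 2K$. There is no substantive obstacle here: the argument is a routine Mills-ratio calculation, and the only place where the hypothesis $K \ge 1$ enters is the final elementary comparison $K^2 + 1 \le 2K^2$, which is precisely what determines the constant $2$ in the bound.
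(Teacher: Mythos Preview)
Your proof is correct and follows essentially the same route as the paper: both compute $\mathbb{E}(Z\mid Z\ge K)=\phi(K)/\mathbb{P}(Z\ge K)$ and reduce the claim to the Mills-ratio lower bound $\mathbb{P}(Z\ge K)\ge \phi(K)/(2K)$ for $K\ge 1$. The paper simply asserts this tail lower bound, whereas you derive it via the integration-by-parts estimate $\mathbb{P}(Z\ge K)\ge K\phi(K)/(K^2+1)$ together with $K^2+1\le 2K^2$.
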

	\begin{proof}
		This easily follows from the argument in the proof of  Lemma \ref{cond variance}. Indeed, denoting by $h_K(x)$  the density function of $Z$ conditioned on $Z \ge K,$ 
		\begin{align*}
			\int_K^\infty xh_K(x)  dx =  \frac{1}{\bP(Z \ge K)}  \frac{1}{\sqrt{2\pi}}e^{-K^2/2}  \le 2K.
		\end{align*}
		Here we used a lower bound   $\bP(Z \ge K) \ge \frac{1}{2\sqrt{2\pi}} K^{-1} e^{-K^2/2}   $ for $K \ge 1.$
	\end{proof}
	

	The following two lemmas provide  conditional information for the Brownian increments and GUE increments. 
	\begin{lemma}\label{cond brownian}
		Let $(W_t)_t$ be  a standard Brownian motion. Then, there exists a constant $C>0$ such that for any    $0\le a\le b\le c$ with $a < c$,
		\begin{align*}
			\bE [ |W_{b}-W_a|   \mid  W_{c}  -W_{a}  ] &\le  \frac{b-a}{c-a} |W_{c}  -W_{a} |+ C\sqrt{b-a}.
		\end{align*}
	\end{lemma}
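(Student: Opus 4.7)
The plan is to exploit the Gaussian structure of Brownian increments to reduce the estimate to a standard computation of the mean absolute value of a centered normal.

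First I would identify the conditional distribution. Writing $U := W_b - W_a$ and $V := W_c - W_a$, the pair $(U, V)$ is jointly Gaussian with mean zero, variances $b-a$ and $c-a$, and covariance $b-a$. A standard computation (projection of $U$ onto $V$) gives the orthogonal decomposition
\[
U = \frac{b-a}{c-a}\, V + Z,
\]
where $Z$ is a centered Gaussian independent of $V$ with variance
\[
\operatorname{Var}(Z) \;=\; (b-a) - \frac{(b-a)^2}{c-a} \;=\; \frac{(b-a)(c-b)}{c-a} \;\le\; b-a.
\]

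Next I would apply the triangle inequality pointwise and take conditional expectation given $V$. Since $Z$ is independent of $V$,
\[
\bE\bigl[\,|W_b - W_a|\,\bigm|\, W_c - W_a\,\bigr]
\;\le\; \frac{b-a}{c-a}\,|W_c - W_a| + \bE[\,|Z|\,].
\]
The mean absolute value of a centered Gaussian with variance $\sigma^2$ equals $\sigma\sqrt{2/\pi}$, so $\bE[|Z|] \le \sqrt{2/\pi}\sqrt{b-a}$, giving the asserted inequality with $C = \sqrt{2/\pi}$.

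There is no real obstacle here; the only thing to keep in mind is the edge case $b = a$, where both sides vanish, and the case $c = b$, where $Z = 0$ and the inequality becomes trivial. All of this is routine once the orthogonal decomposition of $U$ relative to $V$ is written down.
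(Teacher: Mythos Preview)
Your proof is correct and matches the paper's approach exactly: the paper treats this lemma as the scalar special case of the GUE version (Lemma~\ref{matrix}) and proves that by the same orthogonal decomposition $U = \frac{b-a}{c-a}V + Z$, independence of $Z$ from $V$ via zero covariance plus joint Gaussianity, the variance computation $\operatorname{Var}(Z) = \frac{(b-a)(c-b)}{c-a}$, and the triangle inequality. You even make the constant explicit as $\sqrt{2/\pi}$, which the paper leaves implicit.
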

	\begin{lemma} \label{matrix}
		Let $(\widehat W^n_t)_{t\ge0}$ be the \textup{GUE}$(n)$–Brownian motion.
		Then there exists a constant $C>0$, independent of $n$, such that for any $0\le a\le b\le c$ with $a<c$,
		\[
		\mathbb{E}\bigl[\|\widehat W^n_b - \widehat W^n_a\| \bigm| \widehat W^n_c - \widehat W^n_a\bigr]
		\le
		\frac{b-a}{c-a}\,\|\widehat W^n_c - \widehat W^n_a\|
		+ C\,\sqrt{b-a}.
		\]
	\end{lemma}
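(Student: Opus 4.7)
My plan is to reduce the matrix statement to the classical scalar Brownian bridge identity by exploiting the fact that, in the orthonormal basis $\{E_{\widehat i \widehat j}\}$ from \eqref{eqn:n_basis}, the GUE$(n)$ Brownian motion defined in \eqref{gue} is simply a linear combination of independent standard scalar Brownian motions. The norm in the statement of Lemma \ref{matrix} should be read as the $L^2$ (normalized Hilbert--Schmidt) norm $\|\cdot\|_{L^2(M_n(\bC))}$, which is how the lemma is applied in the proof of Proposition \ref{prop:discretization}.

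First, I would write $\widehat W^n_t = \frac{1}{n}\sum_{\widehat i, \widehat j} E_{\widehat i \widehat j}\, \widehat W^{\widehat i \widehat j}_t$, with $\{\widehat W^{\widehat i \widehat j}\}_{\widehat i, \widehat j}$ mutually independent standard scalar Brownian motions. For each fixed $(\widehat i, \widehat j)$, the classical Brownian bridge decomposition gives
\[
\widehat W^{\widehat i \widehat j}_b - \widehat W^{\widehat i \widehat j}_a = \frac{b-a}{c-a}\bigl(\widehat W^{\widehat i \widehat j}_c - \widehat W^{\widehat i \widehat j}_a\bigr) + Z^{\widehat i \widehat j},
\]
where $Z^{\widehat i \widehat j}$ is Gaussian with mean $0$ and variance $\frac{(b-a)(c-b)}{c-a} \leq b-a$, independent of $\widehat W^{\widehat i \widehat j}_c - \widehat W^{\widehat i \widehat j}_a$. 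Assembling these scalar identities over all entries gives a matrix-valued decomposition
\[
\widehat W^n_b - \widehat W^n_a = \frac{b-a}{c-a}\bigl(\widehat W^n_c - \widehat W^n_a\bigr) + Y^n, \qquad Y^n := \frac{1}{n}\sum_{\widehat i, \widehat j} E_{\widehat i \widehat j}\, Z^{\widehat i \widehat j},
\]
with $Y^n$ independent of $\widehat W^n_c - \widehat W^n_a$ since independence holds coordinatewise in the basis $\{E_{\widehat i \widehat j}\}$.

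Next, I would take $L^2$-norms, apply the triangle inequality, and condition on $\widehat W^n_c - \widehat W^n_a$. Because $Y^n$ is independent of the conditioning variable, the conditional expectation of $\|Y^n\|$ collapses to $\mathbb{E}\|Y^n\|$, and the bound reduces to estimating this quantity by $C\sqrt{b-a}$ uniformly in $n$. Since $\{E_{\widehat i \widehat j}\}$ is orthonormal with respect to $\tr_n$, expanding $\|Y^n\|^2 = \tr_n((Y^n)^2)$ gives
\[
\mathbb{E}\|Y^n\|^2 = \frac{1}{n^2}\sum_{\widehat i, \widehat j} \mathbb{E}(Z^{\widehat i \widehat j})^2 \leq \frac{1}{n^2}\cdot n^2 \cdot (b-a) = b-a,
\]
and Jensen's inequality then yields $\mathbb{E}\|Y^n\| \leq \sqrt{b-a}$, uniformly in $n$.

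There is no real obstacle to this plan: the proof is essentially the scalar Lemma \ref{cond brownian} carried out entrywise. The only step requiring care is verifying that the $\frac{1}{n}$ normalization in \eqref{gue} exactly compensates for the $n^2$ scalar contributions in $\mathbb{E}\|Y^n\|^2$, which is precisely what makes the constant $C$ independent of the matrix dimension $n$.
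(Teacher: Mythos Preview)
Your proof is correct and follows essentially the same approach as the paper: both decompose $\widehat W^n_b - \widehat W^n_a$ as $\frac{b-a}{c-a}(\widehat W^n_c - \widehat W^n_a)$ plus an independent Gaussian remainder $Y^n$ (the paper calls it $\Xi^n$), then bound $\mathbb{E}\|Y^n\|$ by $C\sqrt{b-a}$ uniformly in $n$. The only cosmetic difference is that the paper identifies $Y^n$ as a scaled unit-time GUE$(n)$ matrix and invokes $\sup_n \mathbb{E}\|G^n\| < \infty$, whereas you compute $\mathbb{E}\|Y^n\|^2$ directly in the orthonormal basis; your version is slightly more explicit and makes the constant $C=1$ visible.
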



	Since Lemma \ref{cond brownian} is a special case of Lemma \ref{matrix}, we prove the latter.
	\begin{proof}[Proof of Lemma \ref{matrix}]
		Define
		\[
		\Delta^n := \widehat W^n_c - \widehat W^n_a,
		\quad
		\Xi^n := (\widehat W^n_b - \widehat W^n_a)
		- \frac{b-a}{c-a} \Delta^n.
		\]
		Then we write
		\[
		\widehat W^n_b - \widehat W^n_a
		= \frac{b-a}{c-a}\Delta^n + \Xi^n.
		\] 
		Let $(W_t)_t$ be  a standard Brownian motion.
		Using the fact
		\begin{align*}
			\text{Cov}\Big (W_b-W_a - \frac{b-a}{c-a} (W_c-W_a), \  W_c-W_a\Big)= (b-a) - \frac{b-a}{c-a} (c-a)= 0,
		\end{align*}
		we deduce that for any $1\le i,j,k,\ell\le n,$
		\begin{align*}
			\text{Cov}(\Xi^n_{ij},\Delta^n_{k\ell})=0.
		\end{align*} 
		As $(\Xi^n,\Delta^n)$ is jointly Gaussian, $\Xi^n$ and $\Delta^n$ are independent.
		Also,
		\begin{align*}
			&\text{Var}\Big(W_b-W_a - \frac{b-a}{c-a} (W_c-W_a)\Big) \\
			&
			= \text{Var}(W_b-W_a)
			+ \Bigl(\frac{b-a}{c-a}\Bigr)^2\text{Var}(W_c-W_a)
			- 2\frac{b-a}{c-a}\text{Cov}(W_b-W_a,W_c-W_a)
			= \frac{(c-b)(b-a)}{c-a}.
		\end{align*}
		Thus  $\Xi^n$ has the same distribution as
		\[
		\frac{(c-b)(b-a)}{c-a}G^n,
		\]
		where $G^n$ denotes a unit‐time GUE$(n)$ matrix. As $\sup_n \mathbb E\|G^n\|<\infty$, we have 
		\[
		\mathbb E\Big[ \Big\| \frac{b-a}{c-a}\Delta^n + \Xi^n \Big\| \big\vert  \Delta^n \Big] \le \frac{b-a}{c-a} \|\Delta^n  \| + \mathbb E \| \Xi^n \| \le 
		\frac{b-a}{c-a}\,\|\widehat W^n_c - \widehat W^n_a\|
		+ C\sqrt{b-a},
		\]
		concluding the proof.
	\end{proof}

	\section{Comparison of GUE Laplacian and free Laplacian} \label{sec: Laplacians}
	
	The Laplacian operator corresponding to the GUE($n$) Brownian motion can be expressed as
	\begin{align}\label{eqn:gue_laplacian}
		\widehat{\Theta}_{M_n(\mathbb{C})} U(X) =  \frac{1}{ n^2}\sum_{l=1}^{d}\sum_{\widehat{i}=1}^n\sum_{\widehat{j}=1}^n{\rm Hess}\, U_{M_n(\mathbb{C})}(X)[  \mathbf{e}^l\, E_{\widehat{i}\widehat{j}},  \mathbf{e}^l\, E_{\widehat{i}\widehat{j}}].
	\end{align}
	Recall the normalized basis $E_{\widehat{i}\widehat{j}}$ introduced in \eqref{eqn:n_basis}. 
	We note that this is a standard Laplacian on the space of $d$-tuples of self-adjoint matrices equipped with the \emph{normalized} inner product associated to $\tr_n$, with a further dimensional normalization by $\frac{1}{n^2}$.  We remark that the same operator is obtained if we compute the Laplacian with respect to the inner product associated to $\Tr_n$ and then normalize by $1/n$, which is the normalization used in \cite{jekel2020elementary}, for instance.  For further discussion of the normalization, see \cite[Appendix B]{ST2022}.
	
	The free Laplacian is known to describe the large-$n$ behavior of the normalized Laplacian on $n \times n$ matrices for certain natural classes of functions; see for instance \cite[\S 3.2]{driver2013large}, \cite[\S 4.5]{jekel2022tracial}. A useful class of test functions  are cylindrical functions of the form
	$$
	U_{\cA}(X) = g\Big( \tau\big((\phi_1\circ\psi) (X)\big),\ldots,  \tau\big((\phi_m\circ \psi)(X)\big)\Big)
	$$
	for $g\in C^{2,1}(\bR^m;\bR)$, self-adjoint $\phi_1,\ldots,\phi_m\in {\rm NCP}_d$, and $\psi(x)={\rm arctan}(x)$ applied component-wise. We refer to \cite[Appendix B]{2025viscosity} for the details, some of which we repeat here for completeness. Clearly, $(U_{\cA})_{\cA\in \bW}$ are tracial $W^*$-functions. For $1\le o,q\le m,$
	$g_o$   denotes the partial derivative with respect to the $o$-th component of $g$, and $g_{oq}$  denotes the second partial derivative with respect to the $o$-and $q$-th components. We write
	$$
	g_o = g_o\Big( \tau\big((\phi_1\circ\psi) (X)\big),\ldots,  \tau\big((\phi_m\circ \psi)(X)\big)\Big)
	$$
	and
	$$
	g_{oq} = g_{oq}\Big( \tau\big((\phi_1\circ\psi) (X)\big),\ldots,  \tau\big((\phi_m\circ \psi)(X)\big)\Big).
	$$  
	Voiculescu's $j$th \emph{free difference quotient} is the map $\partial_{x_j}: \NCPd \to \NCPd \otimes \NCPd$ given, for a monomial, by
	$$
	\partial_{x_j} x_{i_1}x_{i_2}\ldots x_{i_m} := \sum_{i_k=j} (x_{i_1}\ldots x_{i_{k-1}})\otimes (x_{i_{k+1}} \ldots x_{i_m}).
	$$
	The \emph{cyclic derivative} $\mathcal{D}_{x_j}^\circ: \NCPd \to \NCPd$ is defined for non-commutative monomials by
	$$
	\mathcal{D}_{x_j}^\circ x_{i_1}x_{i_2}\ldots x_{i_m} := \sum_{i_k=j} \big(x_{i_{k+1}} \ldots x_{i_m} x_{i_1}\ldots x_{i_{k-1}} \big), 
	$$
	and extended linearly to non-commutative polynomials. Both of these derivatives extend to the ${\rm arctan}$ function. As a result or these definitions and the chain rule, the gradient of a cylindrical function is computed as, for a tracial $W^*$ algebra $\cA$ and $X\in L^2(\cA)_{sa}^d$,
	\begin{align*}
		(\nabla U_{\cA})^j(X) = \sum_{o=1}^m g_o\, \mathcal{D}_{x_j}^\circ  (\phi_o\circ \psi)(X) \quad j\in \{1,\ldots,d\} .
	\end{align*}
	We define the non-commutative derivative, for $i,j\in \{1,\ldots, d\}$, naturally to be
	$$
	\partial_{x_i} (\nabla U_{\cA})^j(X) = \sum_{o=1}^m g_o\, \partial_{x_i}\mathcal{D}_{x_j}^o (\phi_o\circ\psi)(X)  \in L^2(\cA\otimes \cA,\tau\otimes \tau).
	$$
	We also collect terms involving the second derivatives of $g$ as
	$$
	(\nabla^2 U_{\cA})^{ij}(X) = \sum_{o=1}^m\sum_{q=1}^m g_{oq}\, \mathcal{D}_{x_i}^o (\phi_o\circ\psi)(X)\otimes \mathcal{D}^o_{x_j} (\phi_q\circ\psi)(X).
	$$
	These terms make up the Hessian of a cylindrical function
	\begin{align}\label{eqn:cylindrical_hessian}
		&\ {\rm Hess}\, U_{\cA}(X)\big[A,B\big]\nonumber\\
		=&\ \sum_{i=1}^d\sum_{j=1}^d\Big( \langle(\nabla^2 U_\cA)^{ij}(X), A^i\otimes B^j\rangle_{L^2(\cA\otimes \cA)} + \langle \partial_{x_i} (\nabla U_\cA)^j(X)\# A^i,B^j\rangle_{L^2(\cA)}\Big),
	\end{align}
	recalling the $\#$ operation such that $(a\otimes b)\# c = a\, c\, b$ and extended linearly.
	
	Finally, see Proposition B.5 of \cite{2025viscosity}, we compute the free Lapacian of a cylindrical function to be
	\begin{align}\label{eqn:free_laplacian}
		\Theta_{\cA} U(X) = \sum_{i=1}^{d}(\tau\otimes \tau)\Big( \partial_{x_i} (\nabla U_{\cA})^i(X)\Big).
	\end{align}

	We now provide a relationship between GUE Laplacian $\widehat{\Theta}_{M_n(\mathbb{C})} $ and free Laplacian ${\Theta}_{M_n(\mathbb{C})} $.
	
	\begin{proposition}\label{cylindrical_comparison}
		Suppose that $U$ is a cylindrical function.  Then  for $X\in  M_n(\mathbb{C})_{\textup{sa}}^d$,
		$$
		\widehat{\Theta}_{M_n(\mathbb{C})} U(X) = \Theta_{M_n(\mathbb{C})} U(X) + \frac{1}{n^2}\sum_{l=1}^d\sum_{o=1}^m\sum_{q=1}^m g_{oq}\, \langle \mathcal{D}_{x_l}^o (\phi_o\circ\psi)(X),  \mathcal{D}^o_{x_l} (\phi_q\circ\psi)(X)\rangle_{M_n(\mathbb{C})}.
		$$
		Suppose that $X^n\in M_n(\mathbb{C})_{\textup{sa}}^d$ are a sequence of matrices converging in law to $X\in L^2(\cA)_{\textup{sa}}^d$ for $\cA\in \bW$. Then,
		$$
		\lim_{n\rightarrow \infty}\widehat{\Theta}_{M_n(\mathbb{C})} U(X^n)  = {\Theta}_{\cA} U(X).
		$$
	\end{proposition}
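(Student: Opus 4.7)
The plan is to substitute the cylindrical Hessian formula \eqref{eqn:cylindrical_hessian} directly into the definition \eqref{eqn:gue_laplacian} of $\widehat{\Theta}_{M_n(\bC)}$ with $A = B = \mathbf{e}^l E_{\widehat{i}\widehat{j}}$.  Since each such vector is supported in a single coordinate, only the $(i,j) = (l,l)$ term of the inner sum survives, leaving two contributions to evaluate: the ``Hessian-of-$g$'' piece involving $(\nabla^2 U)^{ll}$, and the ``cyclic'' piece involving $\partial_{x_l}(\nabla U)^l$.  I expect the first to collapse to the $\frac{1}{n^2}$ correction in the statement, and the second to reproduce $\Theta_{M_n(\bC)} U(X)$.

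For the correction piece, the natural tool is Parseval's identity for the real orthonormal basis $\{E_{\widehat{i}\widehat{j}}\}$ of $M_n(\bC)_{\sa}$ with respect to $\tr_n$.  Writing $D_o := \mathcal{D}_{x_l}^\circ(\phi_o \circ \psi)(X)$, which is self-adjoint because $\phi_o$ and $X$ are, the pairings $\tr_n(D_o E_{\widehat{i}\widehat{j}})$ are real, so
\[
\sum_{\widehat{i},\widehat{j}} \tr_n(D_o E_{\widehat{i}\widehat{j}})\,\tr_n(D_q E_{\widehat{i}\widehat{j}}) = \langle D_o, D_q\rangle_{M_n(\bC)}.
\]
Multiplying by $g_{oq}/n^2$ and summing over $o,q,l$ produces exactly the claimed correction term.

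For the cyclic piece, the central algebraic input is the ``Casimir'' identity
\[
\sum_{\widehat{i},\widehat{j}} E_{\widehat{i}\widehat{j}}\, a\, E_{\widehat{i}\widehat{j}} = n^2\, \tr_n(a)\, \mathbf{1}_{M_n(\bC)}, \qquad a \in M_n(\bC),
\]
which follows from the observation that $\{E_{\widehat{i}\widehat{j}}/\sqrt{n}\}$ is a $\bC$-orthonormal basis of $M_n(\bC)$ with respect to $\Tr_n$ and each $E_{\widehat{i}\widehat{j}}$ is self-adjoint, together with the standard ``conjugation averaging'' identity for complete orthonormal bases.  Expanding $\partial_{x_l}(\nabla U)^l(X)$ as a finite sum $\sum_k a_k \otimes b_k$, so that $(a_k \otimes b_k)\# E_{\widehat{i}\widehat{j}} = a_k E_{\widehat{i}\widehat{j}} b_k$, the Casimir identity combined with the cyclicity of $\tr_n$ converts $\frac{1}{n^2}\sum_{\widehat{i},\widehat{j}} \langle a_k E_{\widehat{i}\widehat{j}} b_k, E_{\widehat{i}\widehat{j}}\rangle_{M_n(\bC)}$ into $\tr_n(a_k)\tr_n(b_k) = (\tr_n \otimes \tr_n)(a_k \otimes b_k)$; summing over $k$ and $l$ then gives $\Theta_{M_n(\bC)} U(X)$ via \eqref{eqn:free_laplacian}.

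For the convergence assertion, the correction term is manifestly $O(1/n^2)$: the arguments of $g_{oq}$ remain in a fixed bounded subset of $\bR^m$ because $\arctan(X^n)$ has operator norm at most $\pi/2$, and the inner products $\langle D_o^{(n)}, D_q^{(n)}\rangle_{M_n(\bC)}$ are bounded by the operator norms of the $D_o^{(n)}$, themselves uniformly bounded as fixed polynomial expressions in $\arctan(X^n)$.  Meanwhile, $\Theta_{M_n(\bC)} U(X^n)$ expands as a finite linear combination of products of traces of non-commutative polynomials in $\arctan(X^n)$, and these converge to the corresponding values at $X$ essentially by the definition of weak-$*$ convergence in $\Sigma_d^2$.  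The main obstacle I anticipate is the careful verification of the Casimir identity and the tracking of the star operations in the Parseval step; the subtlety is that the $E_{\widehat{i}\widehat{j}}$ play a dual role as a real orthonormal basis of $M_n(\bC)_{\sa}$ and, up to the scaling $1/\sqrt{n}$, as a complex orthonormal basis of $M_n(\bC)$, and one must exploit the self-adjointness of the $E_{\widehat{i}\widehat{j}}$ and of the $D_o$ (together with the fact that $\Theta_{M_n(\bC)} U(X)$ is real for a real-valued $U$) to suppress otherwise spurious complex conjugates on the right-hand side.
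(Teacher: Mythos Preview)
Your proposal is correct and follows the same overall decomposition as the paper: both split the GUE Laplacian into the $(\nabla^2 U)^{ll}$ piece (yielding the $1/n^2$ correction via Parseval for the orthonormal basis $\{E_{\widehat{i}\widehat{j}}\}$) and the $\partial_{x_l}(\nabla U)^l$ piece (yielding the free Laplacian), and both handle the convergence by observing that the correction is $O(1/n^2)$ while $\Theta_{M_n(\bC)}U$ is a tracial function continuous under weak-$*$ convergence of laws.

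The one genuine difference is in the evaluation of the cyclic piece. The paper proceeds by a direct case-by-case computation of $\tr_n(E_{\widehat{i}\widehat{j}}\,a\,E_{\widehat{i}\widehat{j}}\,b)$ in the three regimes $\widehat{i}=\widehat{j}$, $\widehat{i}<\widehat{j}$, $\widehat{i}>\widehat{j}$, expanding in terms of matrix entries $a^{\widehat{i}\widehat{j}}$, $b^{\widehat{i}\widehat{j}}$ and checking that the sum collapses to $\tr_n(a)\tr_n(b)$. Your route via the Casimir identity $\sum_{\widehat{i},\widehat{j}} E_{\widehat{i}\widehat{j}}\,a\,E_{\widehat{i}\widehat{j}} = n^2\tr_n(a)\,\mathbf{1}$ is more conceptual and avoids the entrywise bookkeeping; it also makes clear that the result is basis-independent (any self-adjoint $\tr_n$-orthonormal basis of $M_n(\bC)_{\sa}$ would do). The paper's computation, on the other hand, is entirely self-contained and does not require knowing or verifying the Casimir identity separately. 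Both arrive at the same formula with comparable effort.
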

	\begin{proof}
		
		
		When computing \eqref{eqn:gue_laplacian}, the first term in the expression for the Hessian \eqref{eqn:cylindrical_hessian} has no counterpart for the free Laplacian and remains as the error term. For the GUE Laplacian, this term may be expressed as
		\begin{align*}
			&\ \frac{1}{n^2}\sum_{l=1}^d\sum_{\widehat{i}=1}^n\sum_{\widehat{j}=1}^n \langle(\nabla^2 U)^{ll}(X),  E_{\widehat{i}\widehat{j}}\otimes  E_{\widehat{i}\widehat{j}}\rangle_{{M_n(\mathbb{C})}\otimes {M_n(\mathbb{C})}}\\
			=&\ \frac{1}{n^2}\sum_{l=1}^d\sum_{\widehat{i}=1}^n\sum_{\widehat{j}=1}^n\sum_{o=1}^m\sum_{q=1}^m g_{oq}\, \langle \mathcal{D}_{x_l}^o (\phi_o\circ\psi)(X), E_{\widehat{i}\widehat{j}}\rangle_{M_n(\mathbb{C})}\, \langle \mathcal{D}^o_{x_l} (\phi_q\circ\psi)(X),E_{\widehat{i}\widehat{j}}\rangle_{M_n(\mathbb{C})}\\
			=&\ \frac{1}{n^2}\sum_{l=1}^d\sum_{o=1}^m\sum_{q=1}^m g_{oq}\, \langle \mathcal{D}_{x_l}^o (\phi_o\circ\psi)(X),  \mathcal{D}^o_{x_l} (\phi_q\circ\psi)(X)\rangle_{M_n(\mathbb{C})}.
		\end{align*}
		The quantities $g_{oq}\, \langle \mathcal{D}_{x_l}^o (\phi_o\circ\psi)(X),  \mathcal{D}^o_{x_l} (\phi_q\circ\psi)(X)\rangle_{M_n(\mathbb{C})}$ are $W^*$-tracial functions and thus the prefactor $\frac{1}{n^2}$ forces each of these terms to vanish in the limit $n\rightarrow \infty$.


		
		For the second term in (\ref{eqn:cylindrical_hessian}), we fix $o\in \{1,\ldots, m\}$, $l\in \{1,\ldots, d\}$, and $\hat{i},\hat{j}\in \{1,\ldots, n\}$  and consider a simple term as if $\partial_{x_l}(\nabla U)^l(X) = a\otimes b$. We have from the second Hessian term, 
		\begin{align*}
			\langle (a\otimes b)\# E_{\widehat{i}\widehat{j}},E_{\widehat{i}\widehat{j}}\rangle_{M_n(\mathbb{C})}=&\ \langle a\, E_{\widehat{i}\widehat{j}}\,  b, E_{\widehat{i}\widehat{j}}\,\rangle_{{M_n(\mathbb{C})}}\\
			=&\ \trn\big( E_{\widehat{i}\widehat{j}}\,  a\, E_{\widehat{i}\widehat{j}}\, b\big).
		\end{align*}
		When $\widehat{i}=\widehat{j}$ the above quantity is equal to
		$$
		a^{\widehat{i}\widehat{i}}\, b^{\widehat{i}\widehat{i}}.
		$$
		When $\widehat{i}<\widehat{j}$, we get
		\begin{align*}
			\frac{1}{2} (a^{\widehat{j}\widehat{i}}b^{\widehat{j}\widehat{i}} + a^{\widehat{j}\widehat{j}}b^{\widehat{i}\widehat{i}}+a^{\widehat{i}\widehat{j}}b^{\widehat{i}\widehat{j}}+a^{\widehat{i}\widehat{i}}b^{\widehat{j}\widehat{j}}),
		\end{align*}
		and when $\widehat{i}>\widehat{j}$, we get
		\begin{align*}
			\frac{1}{2}(-a^{\widehat{j}\widehat{i}}b^{\widehat{j}\widehat{i}} + a^{\widehat{j}\widehat{j}}b^{\widehat{i}\widehat{i}}-a^{\widehat{i}\widehat{j}}b^{\widehat{i}\widehat{j}}+a^{\widehat{i}\widehat{i}}b^{\widehat{j}\widehat{j}}).
		\end{align*}
		Summing over the basis we have
		$$
		\frac{1}{n^2}\sum_{\hat{i}=1}^n\sum_{\hat{j}=1}^n\langle (a\otimes b)\# E_{\widehat{i}\widehat{j}},E_{\widehat{i}\widehat{j}}\rangle_{M_n(\mathbb{C})} = {\rm tr}_n(a)\, {\rm tr}_n(b) = ({\rm tr}_n \otimes {\rm tr}_n)(a\otimes b).
		$$
		By linearity, this coincides with the formula \eqref{eqn:free_laplacian} for the free Laplacian.
	\end{proof}

	\bibliography{FreeViscBibJan21}
	\bibliographystyle{plain}

\end{document}